\pgfplotsset{compat=1.16}
\numberwithin{equation}{section}
\newcounter{corr}
\definecolor{violet}{rgb}{0.580,0.,0.827}
\newcommand{\corr}[4][]{\typeout{Warning : a correction remains in page \thepage}
 \stepcounter{corr}
	      {\color{blue}\ifmmode\text{\,\sout{\ensuremath{#2}}\,}\else\sout{#2}\fi}
        {\color{green!50!black}#3}
        {\color{violet}#4}
}
\newcommand{\email}[1]{\href{mailto:#1}{#1}}
\newtheorem{theorem}{Theorem}
\newtheorem{lemma}[theorem]{Lemma}
\newtheorem{assumption}{Assumption}
\newcommand{\bmrm}[1]{{\bm{{\rm #1}}}}
\newcommand{\bmcal}[1]{{\bm{\mathcal{#1}}}}
\newcommand{\bme}{{\bm{e}}}
\newcommand{\bmf}{{\bm{f}}}
\newcommand{\bmn}{{\bm{n}}}
\newcommand{\bmu}{{\bm{u}}}
\newcommand{\bmv}{{\bm{v}}}
\newcommand{\bmw}{{\bm{w}}}
\newcommand{\bmx}{{\bm{x}}}
\newcommand{\bmz}{{\bm{z}}}
\newcommand{\bmU}{\bm{U}}
\newcommand{\bmxi}{{\bm{\xi}}}
\newcommand{\bmpsi}{{\bm{\psi}}}
\newcommand{\bmDelta}{{\bm{\Delta}}}
\newcommand{\bmPi}{{\bm{\Pi}}}
\newcommand{\bmSigma}{{\bm{\Sigma}}}
\newcommand{\bmPsi}{{\bm{\Psi}}}
\newcommand{\calF}{\mathcal{F}}
\newcommand{\calH}{\mathcal{H}}
\newcommand{\calM}{\mathcal{M}}
\newcommand{\calP}{\mathcal{P}}
\newcommand{\calT}{\mathcal{T}}
\newcommand{\bbR}{\mathbb{R}}
\newcommand{\rma}{{\rm{a}}}
\newcommand{\rmb}{{\rm{b}}}
\newcommand{\rmi}{{\rm{i}}}
\newcommand{\rmp}{{\rm{p}}}
\newcommand{\rms}{{\rm{s}}}
\newcommand{\rmu}{{\rm{u}}}
\newcommand{\rmA}{{\rm{A}}}
\newcommand{\rmD}{{\rm{D}}}
\newcommand{\eq}{ ={}& }
\newcommand{\lea}{ \le{}& }
\newcommand{\les}{ \lesssim{}& }
\newcommand{\plus}{ &{}+ }
\newcommand{\nn}{\nonumber}
\newcommand{\nl}{\nn\\}
\newcommand{\defeq}{\vcentcolon=}
\newcommand{\SPAN}[1]{\mbox{\textrm{span}}\big\{#1\big\}}
\newcommand{\CARD}[1]{\mbox{\textrm{Card}}\big(#1\big)}
\newcommand{\h}{h}
\newcommand{\T}{{T}}
\newcommand{\F}{{F}}
\newcommand{\subh}{\h}
\newcommand{\subT}{\T}
\newcommand{\subF}{\F}
\newcommand{\Mh}[1][h]{\calM_{#1}}
\newcommand{\Th}[1][h]{\calT_{#1}}
\newcommand{\Fh}[1][h]{\calF_{#1}}
\newcommand{\Fhi}{\Fh^{\rmi}}
\newcommand{\hT}{\h_\subT}
\newcommand{\ul}[1]{\underline{#1}}
\newcommand{\ol}[1]{\overline{#1}}
\newcommand{\ulbm}[1]{\ul{\bm{#1}}}
\newcommand{\bmuT}{\bmu_\subT}
\newcommand{\bmvT}{\bmv_\subT}
\newcommand{\bmuF}{\bmu_\subF}
\newcommand{\bmvF}{\bmv_\subF}
\newcommand{\bmvh}{\bmv_\subh}
\newcommand{\ulbmuT}{\ulbm{u}_\subT}
\newcommand{\ulbmvT}{\ulbm{v}_\subT}
\newcommand{\ulbmwT}{\ulbm{w}_\subT}
\newcommand{\ulbmuh}{\ulbm{u}_\subh}
\newcommand{\ulbmvh}{\ulbm{v}_\subh}
\newcommand{\ulbmwh}{\ulbm{w}_\subh}
\newcommand{\nor}{\bmn}
\newcommand{\norT}{\nor_{\subT}}
\newcommand{\norF}{\nor_{\subF}}
\newcommand{\norTF}{\nor_{\subT\subF}}
\def\R{\bbR}
\newcommand{\POLY}[1]{\calP^{#1}}
\newcommand{\bmPOLY}[1]{\bmcal{P}^{#1}}
\newcommand{\HS}[1]{H^{#1}}
\newcommand{\HONE}{\HS{1}}
\newcommand{\HONEzr}{\HONE_0}
\newcommand{\LP}[1]{L^{#1}}
\newcommand{\LTWO}{\LP{2}}
\newcommand{\POLYPhi}[1]{\calP_{\Phi}^{#1}}
\newcommand{\POLYPhizr}[1]{\calP_{\Phi,0}^{#1}}
\newcommand{\bmPOLYPsi}[1]{\bmcal{P}_{\bmPsi}^{#1}}
\newcommand{\bmPOLYD}[1]{\bmcal{P}_{\bmDelta}^{#1}}
\newcommand{\bmPOLYn}[1]{\bmcal{P}_{\partial_{\bmn}}^{#1}}
\DeclareMathOperator*{\DIV}{div}
\DeclareMathOperator*{\CURL}{\mathbf{curl}}
\newcommand{\neumann}{\partial_{\bmn}}
\newcommand{\norm}[2][]{\|#2\|_{#1}}
\newcommand{\seminorm}[2][]{|#2|_{#1}}
\newcommand{\brac}[2][]{(#2)_{#1}}
\newcommand{\SqBrac}[2][]{\Big[#2\Big]_{#1}}
\newcommand{\Seminorm}[2][]{\Big|#2\Big|_{#1}}
\newcommand{\Brac}[2][]{\Big(#2\Big)_{#1}}
\newcommand{\SEMINORM}[2][]{\left|#2\right|_{#1}}
\newcommand{\BRAC}[2][]{\left(#2\right)_{#1}}
\newcommand{\HHOnormTPsi}[1]{\norm[1,T,\bmPsi]{#1}}
\newcommand{\HHOnormhPsi}[1]{\norm[1,h,\bmPsi]{#1}}
\newcommand{\PiTPhi}[1]{\Pi_{\subT,\Phi}^{#1}}
\newcommand{\bmPiT}[1]{\bmPi_\subT^{#1}}
\newcommand{\bmPiF}[1]{\bmPi_\subF^{#1}}
\newcommand{\bmPiTD}[1]{\bmPi_{\subT,\bmDelta}^{#1}}
\newcommand{\bmPiFn}[1]{\bmPi_{\subF,\neumann}^{#1}}
\newcommand{\SigmaT}[1]{\Sigma_\subT^{#1}}
\newcommand{\bmSigmaT}[1]{\bmSigma_\subT^{#1}}
\newcommand{\bmSigmaTPsi}[1]{\bmSigma_{\subT,\bmPsi}^{#1}}
\newcommand{\SigmaTPhi}[1]{\Sigma_{\subT,\Phi}^{#1}}
\newcommand{\SigmahPhi}[1]{\Sigma_{\subh,\Phi}^{#1}}
\newcommand{\bmrTPsi}[1]{\bmrm{r}_{\subT,\bmPsi}^{#1}}
\newcommand{\bmrhPsi}[1]{\bmrm{r}_{\subh,\bmPsi}^{#1}}
\newcommand{\DTPhi}[1]{\rmD_{\subT,\Phi}^{#1}}
\newcommand{\bmUh}[1]{\ulbm{U}_{\subh}^{#1}}
\newcommand{\bmUhk}{\bmUh{k}}
\newcommand{\bmUTPsi}[1]{\ulbm{U}_{\subT,\bmPsi}^{#1}}
\newcommand{\bmUTPsik}{\bmUTPsi{k}}
\newcommand{\bmUhPsi}[1]{\ulbm{U}_{\subh,\bmPsi}^{#1}}
\newcommand{\bmUhPsik}{\bmUhPsi{k}}
\newcommand{\bmUhPsizr}[1]{\ulbm{U}_{\subh,\bmPsi,0}^{#1}}
\newcommand{\bmUhPsizrk}{\bmUhPsizr{k}}
\newcommand{\bmXhPsiPhik}{\ulbm{X}_{\subh,\bmPsi,\Phi}^{k}}
\newcommand{\bmITPsi}[1]{\ulbm{I}_{\subT,\bmPsi}^{#1}}
\newcommand{\bmITPsik}{\bmITPsi{k}}
\newcommand{\bmIhPsi}[1]{\ulbm{I}_{\subh,\bmPsi}^{#1}}
\newcommand{\bmIhPsik}{\bmIhPsi{k}}
\def\a{\rma}
\newcommand{\aT}{\a_\subT}
\newcommand{\sT}{\rms_\subT}
\newcommand{\ah}{\a_\subh}
\newcommand{\bT}{\rmb_T}
\def\bh{\rmb_h}
\newcommand{\reverseLogLogSlopeTriangle}[5]
{
	\pgfplotsextra
	{
		\pgfkeysgetvalue{/pgfplots/xmin}{\xmin}
		\pgfkeysgetvalue{/pgfplots/xmax}{\xmax}
		\pgfkeysgetvalue{/pgfplots/ymin}{\ymin}
		\pgfkeysgetvalue{/pgfplots/ymax}{\ymax}
		
		\pgfmathsetmacro{\xArel}{#1}
		\pgfmathsetmacro{\yArel}{#3}
		\pgfmathsetmacro{\xBrel}{#1-#2}
		\pgfmathsetmacro{\yBrel}{\yArel}
		\pgfmathsetmacro{\xCrel}{\xBrel}
		
		\pgfmathsetmacro{\lnxB}{\xmin*(1-(#1-#2))+\xmax*(#1-#2)} 
		\pgfmathsetmacro{\lnxA}{\xmin*(1-#1)+\xmax*#1} 
		\pgfmathsetmacro{\lnyA}{\ymin*(1-#3)+\ymax*#3} 
		\pgfmathsetmacro{\lnyC}{\lnyA+#4*(\lnxA-\lnxB)}
		\pgfmathsetmacro{\yCrel}{\lnyC-\ymin)/(\ymax-\ymin)}
		
		\coordinate (A) at (rel axis cs:\xArel,\yArel);
		\coordinate (B) at (rel axis cs:\xBrel,\yBrel);
		\coordinate (C) at (rel axis cs:\xCrel,\yCrel);
		
		\draw[#5]   (A)-- node[pos=0.5,anchor=north] {\scriptsize{1}}
		(B)-- 
		(C) -- node[pos=0.,anchor=east] {\scriptsize{#4}}
		cycle;
	}
}
\numberwithin{equation}{section}
\begin{document}	

\title{An enriched hybrid high-order method for the Stokes problem with application to flow around submerged cylinders}
\author{Liam Yemm}
\affil{School of Mathematics, Monash University, Melbourne, Australia, \email{liam.yemm@monash.edu}}
\maketitle

\begin{abstract}
  An enriched hybrid high-order method is designed for the Stokes equations of fluid flow and is fully applicable to generic curved meshes. Minimal regularity requirements of the enrichment spaces are given, and an abstract error analysis of the scheme is provided. The method achieves consistency in the enrichment space and is proven to converge optimally in energy error. The scheme is applied to 2D flow around circular cylinders, for which the local behaviour of the velocity and pressure fields are known. By enriching the local spaces with these solutions, superior numerical results near the submerged cylinders are achieved. 
  \medskip\\
  \textbf{Key words:} hybrid high-order methods, enriched scheme, error analysis, curved meshes. 
  \medskip\\
  \textbf{MSC2010:} 65N12, 65N15, 65N30.
\end{abstract}

\section{Introduction}\label{sec:intro}

In this paper, an enriched hybrid high-order scheme is developed for solving the Stokes equations. Such a method is particularly well-suited for situations where certain aspects of the solution can be determined, at least locally. By achieving consistency in these components of the solution, improved approximation rates can be expected.  Moreover, high-order methods require high-order regularity to achieve optimal approximation. In the case of singular solutions, conventional methods fail to accurately capture the singularity and optimal convergence is lost. By enriching the local polynomial spaces with the singular components of the solution, optimal approximation rates are recovered. One common scenario where the exact solution lacks regularity is near corners in the domain \parencite{grisvard:1985:elliptic}. 

 The enrichment of finite element schemes is typically achieved following a partition of unity approach \parencite{melenk.babuska:1996:partition, babuska.melenk:1997:partition}. A model for crack growth was described by \textcite{belytschko.black:1999:elastic}, and similarly by \textcite{moes.dolbow.ea:1999:finite}, utilising the partition of unity functions to enrich the local spaces with the asymptotic solution around the crack. There have been numerous studies of enriched finite element methods for the Stokes equations. The earliest work on this topic appears to be that of \textcite{wagner.moes.ea:2001:extended}, which focuses on rigid particles suspended in a fluid. They developed an extended finite element method (XFEM) with polynomial spaces enriched with single-particle solutions in order to better approximate the solutions to multiple-particle flows. 
The XFEM has also been used in the study of multi-phase flows \parencite{chessa.belytschko:2003:enriched, zlotnik.dies:2009:hierarchical}, aiming to capture velocity and pressure discontinuities at the fluid interfaces. \textcite{legrain.moes.ea:2008:stability} have applied the XFEM to mixed formulations for the treatment of holes, material inclusions, and cracks in the incompressible limit.

Within the framework of polytopal methods, an enriched virtual element method (VEM) for the Poisson problem has been proposed \parencite{benvenuti.chiozzi.ea:2019:extended}, however, only the lowest order VEM was considered and no estimates of the error were given. The same authors have proposed an enriched VEM for a linear elasticity fracture problem \parencite{benvenuti.chiozzi.ea:2022:extended}, however, this work also lacks an error analysis. The work of \textcite{artioli.mascotto:2021:enrichment} designs an enriched non-conforming virtual element method (NC-VEM) for harmonic singularities arising from irregular domains in two dimensions. Moreover, the enriched NC-VEM is capable of handling highly irregular harmonic singularities, including those arising from cracked domains. However, the analysis requires a discrete trace inequality \parencite[][Equation (49)]{artioli.mascotto:2021:enrichment}, which depends on the singular function. Following this approach, the same authors have defined an enriched NC-VEM for a plane elasticity problem with corner singularities \parencite{artioli.mascotto:2023:enriched}. An extended hybrid high-order method for the Poisson problem which avoids the use of discrete inequalities dependent on the enrichment function is provided by \textcite{yemm:2022:design}. Notably, despite these advancements, no attempts have been made to design an enriched polytopal scheme for the Stokes equations.

Proposed by \textcite{di-pietro.ern:2015:linear.elasticity, di-pietro.ern.ea:2014:arbitrary}, the hybrid high-order (HHO) method is a modern polytopal method for the discretisation of elliptic partial differential equations. HHO methods allow for arbitrary order approximation rates and are constructed to better represent the underlying physics. One of their key benefits is the facilitation of static condensation of the system matrix, which leads to a substantial reduction in globally coupled degrees of freedom. An overview of the design, analysis and applications of the HHO method is provided by \textcite{di-pietro.droniou:2020:hybrid}.  

In this paper, an enriched hybrid high-order method for the Stokes equation is designed following the approach of \textcite{yemm:2022:design} for the Poisson problem. However, due to the requirement to handle an additional pressure singularity - which is coupled to the velocity singularity - and the need to define an $\inf$--$\sup$ scheme, the analysis is more involved. Given enrichment spaces satisfying Assumption \ref{assum:regularity}, an enriched method is designed and shown to be well-posed and achieve optimal error estimates in energy norm. Moreover, the method of \textcite{yemm:2023:new} of defining polynomials on curved faces is followed, leading to the scheme proposed in this paper being valid for highly generic curved meshes.

The singularities which arise near corners of the domain in the Poisson problem have zero Laplacian. As such, the requirement of \parencite[Assumption 2]{yemm:2022:design} for the singularities to have square-integrable Laplacian is satisfied trivially for such cases, and the element polynomial spaces do not require enrichment. However, due to the nature of the Stokes problem, the singularities arising from corners of the domain do not have zero Laplacian, but rather have Laplacian equal to the gradient of the pressure singularity. As such, the assumption of square-integrable Laplacian is not trivially satisfied. Indeed, it is found that Assumption \ref{assum:regularity} is only satisfied for singularities arising from convex corners. For non-convex corners a different approach is required. As such, corner singularities are not considered. Instead, the method is tested on creeping flow around circular cylinders in Section \ref{sec:cylinders}. The results show that by enriching the spaces with relevant functions describing the local behaviour of the velocity and pressure around the cylinders, the scheme performs much better than a classical HHO scheme.

The paper is organised as follows: in Section \ref{sec:model}, the Stokes problem and its enriched HHO discretisation is presented. The main results are stated in Section \ref{sec:main.results} and proven in Section \ref{sec:proofs}. The asymptotic behaviour of Stokes flow around submerged cylinders is derived in Section \ref{sec:cylinders}, and an application of the enriched HHO scheme for flow around cylinders is given in Sections \ref{sec:cylinders.testA} and \ref{sec:cylinders.testB}.

\section{Model Problem and HHO Discretisation}\label{sec:model}

Given a bounded Lipschitz domain $\Omega \subset \mathbb{R}^d$, where $d\ge 2$, the Stokes problem seeks to find the velocity field $\bmu:\Omega \to \R^d$ and the pressure $p:\Omega \to \R$ that satisfy,
\begin{subequations}\label{eq:stokes}
	\begin{alignat}{2}
		- \nu \bmDelta \bmu + \nabla p \eq \bmf&\quad&\textrm{in }\Omega, \label{eq:stokes.momentum}\\
		\nabla\cdot \bmu \eq 0&\quad&\textrm{in }\Omega, \label{eq:stokes.mass}\\
		\bmu \eq \bm{0}&\quad&\textrm{on }\partial\Omega, \label{eq:stokes.no.slip}\\
		\int_{\Omega} p \eq 0, &\quad&\label{eq:stokes.pressure}
	\end{alignat}
\end{subequations}
where $\bmf\in\LTWO(\Omega)^d$ is a prescribed volumetric force and $\nu>0$ denotes a constant viscous term. Consider the following weak formulation: find $\bmu\in\HONEzr(\Omega)^d$, $p\in\LTWO_0(\Omega)$ such that
\begin{subequations}\label{eq:stokes.weak.form}
	\begin{alignat}{2}	
		\nu\rma(\bmu, \bmv) + \rmb(\bmv, p) \eq \int_{\Omega}\bmf\cdot\bmv\quad&\forall& \bmv\in\HONEzr(\Omega)^d, \label{eq:stokes.weak.form.momentum}\\
		-\rmb(\bmu, q) \eq 0 \quad&\forall& q\in\LTWO(\Omega),\label{eq:stokes.weak.form.mass}
	\end{alignat}
\end{subequations}
where $\LTWO_0(\Omega)$ denotes the subspace of functions in $\LTWO(\Omega)$ with zero mean value, and the bilinear forms, $\rma:\HONEzr(\Omega)^d\times\HONEzr(\Omega)^d\to\R$ and $\rmb:\HONEzr(\Omega)^d\times\LTWO(\Omega)\to\R$, are defined via
\[
\rma(\bmw,\bmv) \defeq \int_{\Omega}\nabla\bmw : \nabla\bmv \quad;\quad\rmb(\bmv,q) \defeq -\int_{\Omega}(\nabla\cdot\bmv) q,
\]
where, for a differentiable vector-valued function $\bmw$, $(\nabla \bmw)_{ij} = \partial_j w_i$ denotes the total derivative.

Let \(\calH\subset(0, \infty)\) be a countable set of mesh sizes with a unique cluster point at \(0\). For each \(h\in\calH\), partition the domain \(\Omega\) into a \emph{curved} mesh \(\Mh=(\Th, \Fh)\), where $\Th$ denotes the mesh elements and $\Fh$ the mesh faces.

The mesh elements $\Th$ are a disjoint set of open, bounded, simply connected regions in $\bbR^d$ with piece-wise $C^1$ boundary $\partial T$ satisfying $\ol{\Omega} = \bigcup_{T\in\Th} \ol{T}$.

The mesh faces $\Fh$ are a disjoint set of non-intersecting finite $C^1$ manifolds which partition the mesh skeleton: $\bigcup_{T\in\Th} \partial T = \bigcup_{F\in\Fh} \ol{F}$, and for each $F\in\Fh$ there either exists two distinct elements $T_1,T_2\in\Th$ such that $F \subset \partial T_1 \cap \partial T_2$ and $F$ is called an internal face, or there exists one element $T\in\Th$ such that $F \subset \partial T \cap \partial \Omega$ and $F$ is called a boundary face. The set of interior faces are collected in the set $\Fh^i$ and boundary faces in the set $\Fh^b$.

The parameter \(h\) is given by \(h\defeq\max_{T\in\Th}\hT\) where, for \(X=T\in\Th\) or \(X=F\in\Fh\), \(h_X\) denotes the diameter of \(X\). The set of faces attached to an element \(T\in\Th\) are collected in the set \(\Fh[T]:=\{F\in\Fh:F\subset \partial T\}\) and the set of mesh elements attached to a face $F\in\Fh$ are collected in the set $\Th[F]\defeq=\{T\in\Th:F\subset \partial T\}$. The unit normal to \(F\in\Fh[T]\) pointing outside \(T\) is denoted by \(\norTF\). 

Consider the following regularity assumption on the mesh elements.

\begin{assumption}[Regular mesh sequence]\label{assum:star.shaped}	
	There exists a constant \(\varrho>0\) such that, for each \(h\in\calH\), every \(T\in\Th\) is connected by star-shaped sets with parameter \(\varrho\) (see \cite[Definition 1.41]{di-pietro.droniou:2020:hybrid}). 
\end{assumption}

Given a mesh sequence satisfying Assumption \ref{assum:star.shaped}, the following continuous trace inequality holds \parencite{droniou.yemm:2021:robust}:

\begin{lemma}[Continuous trace inequality]
	For all \(v\in H^1(T)\), it holds,
	\begin{equation}\label{small.faces:eq:continuous.trace}
		\hT\norm[\partial T]{v}^2 \lesssim \norm[T]{v}^2+\hT^2\norm[T]{\nabla v}^2, 
	\end{equation}
	where the hidden constant depends on the mesh regularity parameter $\varrho$ and the dimension $d$.
\end{lemma}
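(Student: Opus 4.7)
The plan is to reduce to the case where $T$ is star-shaped with respect to a ball of radius $\varrho h_T$, and then apply the divergence theorem to a well-chosen vector field. First I would appeal to Assumption \ref{assum:star.shaped} and \cite[Definition 1.41]{di-pietro.droniou:2020:hybrid} to write $T$ as a finite union of at most $N = N(\varrho)$ subdomains $S_1,\dots,S_N$, each star-shaped with respect to a ball of radius $\varrho h_T$. Since $\partial T \subset \bigcup_i \partial S_i$ up to a set of surface measure zero and $h_{S_i} \le h_T$, it suffices to prove the inequality on each $S_i$ and sum; from here on I can assume $T$ itself is star-shaped with respect to some $B(x_0, \varrho h_T)$.

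On such a $T$, take first $v \in C^1(\ol T)$ and apply the divergence theorem to the vector field $v^2(x - x_0)$:
\begin{equation*}
\int_{\partial T} v^2 \, (x - x_0) \cdot \norT \, \rmd s = d \int_T v^2 \, \rmd x + 2 \int_T v \, (x - x_0) \cdot \nabla v \, \rmd x.
\end{equation*}
Star-shapedness with respect to $B(x_0, \varrho h_T)$ yields the pointwise bound $(x - x_0) \cdot \norT(x) \ge \varrho h_T$ for a.e.\ $x \in \partial T$: every $y$ in the inscribed ball satisfies $(y - x) \cdot \norT(x) \le 0$, so choosing $y = x_0 + \varrho h_T \, \norT(x)$ gives the claim. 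Combining this with the trivial estimate $|x - x_0| \le h_T$ on $T$ and Young's inequality on the cross term, I obtain
\begin{equation*}
\varrho h_T \norm[\partial T]{v}^2 \lesssim \norm[T]{v}^2 + h_T^2 \norm[T]{\nabla v}^2,
\end{equation*}
and the density of $C^1(\ol T)$ in $H^1(T)$ (guaranteed by the star-shaped, hence Lipschitz, geometry) extends the bound to all $v \in H^1(T)$.

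The main obstacle I anticipate is the pointwise normal bound $(x - x_0) \cdot \norT(x) \ge \varrho h_T$ on a boundary that is only piecewise $C^1$: the argument above is clean at smooth boundary points, but at edges and corners the outward unit normal is not defined. This is handled by observing that the non-smooth part of $\partial T$ has surface measure zero and therefore does not contribute to $\int_{\partial T} v^2 \, \rmd s$, so the estimate survives intact. The remaining ingredients — the reduction from a chain of star-shaped pieces to a single one and the density step — are bookkeeping that fits naturally inside the standing regular mesh sequence assumption.
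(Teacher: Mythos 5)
Your argument is correct and is essentially the standard proof: the paper itself does not prove this lemma but defers to the cited reference, where the same strategy is used --- reduce via the star-shaped decomposition of Assumption \ref{assum:star.shaped} to a single set star-shaped with respect to a ball of radius $\varrho\hT$, apply the divergence theorem to $v^2(x-x_0)$, use the lower bound $(x-x_0)\cdot\norT\ge\varrho\hT$ a.e.\ on $\partial T$ together with Young's inequality, and conclude by density. The only cosmetic point is that $y=x_0+\varrho\hT\,\norT(x)$ lies on the closure of the inscribed ball, so one should take $y=x_0+(1-\epsilon)\varrho\hT\,\norT(x)$ and let $\epsilon\to 0$; this changes nothing.
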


To handle the singular solutions that may arise due to the geometry of the domain or in the presence of irregular data, assume that both the velocity and pressure can be decomposed into a `regular part' and a `singular part' as follows:
\[
\bmu = \bmu_r + \bmpsi \quad;\quad p = p_r + \phi,
\]
where $\bmu_r \in \HS{k+2}(\Th)^d$ and $p_r \in \HS{k+1}(\Th)$. The singular components $\bmpsi$ and $\phi$ are members of $\bmPsi(\Th)$ and $\Phi(\Th)$, respectively, where $\bmPsi(\Th)$ and $\Phi(\Th)$ are finite dimensional enrichment spaces satisfying the following assumption.

\begin{assumption}[Assumptions on the enrichment spaces]\label{assum:regularity}
	${}$
	\begin{enumerate} 
		\item Assumptions on $\bmPsi(\Th)$: For every $T\in\Th$,
		\begin{enumerate}[label=\normalfont(\textbf{U\arabic*}),ref=\normalfont(\textbf{U\arabic*})]
			\item\label{item:U1} \(\bmPsi(T) \subset H^1(T)^d\),
			\item\label{item:U2} \(\bmDelta \bmPsi(T) \subset L^2(T)^d\),
			\item\label{item:U3} \(\nabla \bmPsi(T) \norTF \subset L^2(F)^d\).
		\end{enumerate}
		\item Assumptions on $\Phi(\Th)$: For every $T\in\Th$,
		\begin{enumerate}[label=\normalfont(\textbf{P\arabic*}),ref=\normalfont(\textbf{P\arabic*})]
			\item\label{item:P1} \(\Phi(T) \subset \HONE(T)\).
		\end{enumerate}
	\end{enumerate}
\end{assumption}

\subsection{Discretisation of the Velocity}

For an integer $\ell \ge 0$, the space of scalar-valued and vector-valued polynomials on $\R^d$ of degree at most $\ell$ are denoted by $\POLY{\ell}_d$ and $\bmPOLY{\ell}_d$, respectively. Their restrictions to an element $T\in\Th$ are denoted by $\POLY{\ell}(T) = \POLY{\ell}_d|_T$ and $\bmPOLY{\ell}(T) = \bmPOLY{\ell}_d|_T$.

In order to discretise the velocity unknowns, the following spaces and operators are introduced. For each element $T \in \Th$ and an integer $k\ge 0$, define the extended polynomial space
\begin{equation}\label{def:extended.space}
	\bmPOLYPsi{k+1}(T) \defeq \bmPOLY{k+1}(T) +  \bmPsi(T) \subset \HONE(T)^d.
\end{equation}
To project onto the extended space $\bmPOLYPsi{k+1}(T)$, the extended elliptic projector $\bmSigmaTPsi{k+1}: \HONE(T)^d \to \bmPOLYPsi{k+1}(T)$ is introduced. This projector is uniquely defined such that for all $v \in \HONE(T)^d$,
\begin{equation}\label{eq:elliptic.projector}
	\int_T\nabla(\bmv-\bmSigmaTPsi{k+1} \bmv):\nabla \bmw = 0\qquad\forall\,\bmw\in\bmPOLYPsi{k+1}(T),
\end{equation}
and
\begin{equation}\label{eq:elliptic.projector.closure}
	\int_T(\bmv - \bmSigmaTPsi{k+1}\bmv) = \bm{0}.
\end{equation}

On curved faces, the space of vector-valued polynomials is defined analogously to the scalar case \parencite[Equation (2.2)]{yemm:2023:new} as
\begin{equation}\label{eq:curved.face.space}
	\bmPOLY{k}(F) \defeq \bmPOLY{0}_d|_F + (\POLY{k}_d)^{d\times d}|_F \norF,
\end{equation}
where $\norF$ is an arbitrary unit normal to the face $F$, and $(\POLY{k}_d)^{d\times d}$ denotes the space of tensor-valued polynomials on $\R^d$. The choice of unit normal $\norF$ does not affect the definition of $\bmPOLY{k}(F)$. If the face $F$ is planar, the definition \eqref{eq:curved.face.space} of face polynomials reduces to 
\[
\bmPOLY{k}(F) = \bmPOLY{k}_d|_F.
\]

The locally enriched element and face spaces are defined for each $T\in\Th$ and $F\in\Fh$ as
\[
\bmPOLYD{k}(T) \defeq \bmPOLY{k}(T) + \bmDelta \bmPsi(T) + \nabla \Phi(T),
\]
and
\[
\bmPOLYn{k}(F) \defeq \bmPOLY{k}(F) + \sum_{T\in\Th[F]}\Brac{\nabla \bmPsi(T)\norTF + \Phi(T)\norTF}.
\]
Due to the assumptions on the singular spaces, it holds that:
\[
\bmPOLYD{k}(T) \subset \LTWO(T)^d\quad\textrm{and}\quad\bmPOLYn{k}(F) \subset \LTWO(F)^d. 
\]
Define the local discrete velocity space via:
\begin{equation}\label{eq:bmUTPsi.def}
	\bmUTPsi{k} := \bmPOLYD{k}(T) \times \bigtimes_{F\in\Fh[T]}\bmPOLYn{k}(F).
\end{equation}
The local interpolator $\bmITPsik:\HONE(T)^d\to \bmUTPsi{k}$ is defined as:
\begin{equation}\label{eq:bmITPsi.def}
	\bmITPsik \bmv = 
	(\bmPiTD{k}\bmv, (\bmPiFn{k}\bmv)_{F\in\Fh[T]}),
\end{equation}
where $\bmPiTD{k}$ and $\bmPiFn{k}$ denote the $L^2$-orthogonal projectors on the spaces $\bmPOLYD{k}(T)$ and $\bmPOLYn{k}(F)$, respectively. For each element $T \in \Th$, the local reconstruction operator $\bmrTPsi{k+1}:\bmUTPsi{k}\to\bmPOLYPsi{k+1}(T)$ is introduced, which satisfies the following equations for every $\ulbmvT = (\bmvT, (\bmvF)_{F\in\Fh[T]}) \in \bmUTPsi{k}$:
\begin{subequations}\label{eq:bmrTPsi}
	\begin{equation}
		\int_T \nabla \bmrTPsi{k+1}\ulbmvT : \nabla \bmw = -\int_T \bmvT \cdot \bmDelta \bmw + \sum_{F\in\Fh[T]}\int_{F} \bmvF \cdot (\nabla \bmw) \norTF \qquad \forall \bmw \in \bmPOLYPsi{k+1}(T), \label{eq:bmrTPsi.def} 
	\end{equation}
	and
	\begin{equation}
		\int_T (\bmrTPsi{k+1}\ulbmvT - \bmvT) = \bm{0}. \label{eq:bmrTPsi.closure}
	\end{equation}
\end{subequations}
Note that the $\LTWO(T)^d$--regularity of $\bmvT$ and $\Delta \bmw$, and the $\LTWO(F)^d$--regularity of $\bmvF$ and $(\nabla \bmw) \norTF$, combined with the inclusion $\bmPOLYPsi{k+1}(T)\subset\HONE(T)^d$ ensure that $\bmrTPsi{k+1}$ is well-defined. The extended reconstruction operator $\bmrTPsi{k+1}$ is designed to mimic an integration by parts of equation \eqref{eq:elliptic.projector} so that the following commutation property holds.

\begin{lemma}\label{lem:commutation.property}
	The following identity holds true:
	\begin{equation}\label{eq:commutation.property}
		\bmrTPsi{k+1} \circ \bmITPsik = \bmSigmaTPsi{k+1}.
	\end{equation}
\end{lemma}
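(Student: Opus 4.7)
\medskip

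\noindent\textbf{Proof plan.} Fix $\bmv\in\HONE(T)^d$. Both $\bmrTPsi{k+1}(\bmITPsik\bmv)$ and $\bmSigmaTPsi{k+1}\bmv$ lie in $\bmPOLYPsi{k+1}(T)$, so since $\bmSigmaTPsi{k+1}\bmv$ is uniquely characterised by \eqref{eq:elliptic.projector}--\eqref{eq:elliptic.projector.closure}, it suffices to show that $\bmrTPsi{k+1}(\bmITPsik\bmv)$ satisfies the same two relations.

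\medskip

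\noindent\textbf{Step 1: test-function space is absorbed by the local spaces.} The crucial observation is that, by the very construction of $\bmPOLYD{k}(T)$ and $\bmPOLYn{k}(F)$, one has for every $\bmw\in\bmPOLYPsi{k+1}(T)$ that
\[
\bmDelta\bmw\in\bmPOLYD{k}(T)\qquad\text{and}\qquad (\nabla\bmw)\norTF\in\bmPOLYn{k}(F)\ \ \forall F\in\Fh[T].
\]
Indeed, writing $\bmw=\bmw_p+\bmw_\psi$ with $\bmw_p\in\bmPOLY{k+1}(T)$ and $\bmw_\psi\in\bmPsi(T)$, one has $\bmDelta\bmw_p\in\bmPOLY{k-1}(T)\subset\bmPOLY{k}(T)$ and $\bmDelta\bmw_\psi\in\bmDelta\bmPsi(T)$; likewise, $(\nabla\bmw_p)\norTF$ lies in $(\POLY{k}_d)^{d\times d}|_F\norF\subset\bmPOLY{k}(F)$ (up to sign of $\norF$) and $(\nabla\bmw_\psi)\norTF\in\nabla\bmPsi(T)\norTF$. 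This is the only delicate point; the rest is algebra.

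\medskip

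\noindent\textbf{Step 2: gradient relation.} For any $\bmw\in\bmPOLYPsi{k+1}(T)$, use \eqref{eq:bmrTPsi.def} with $\ulbmvT=\bmITPsik\bmv$:
\[
\int_T\nabla\bmrTPsi{k+1}(\bmITPsik\bmv):\nabla\bmw
= -\int_T\bmPiTD{k}\bmv\cdot\bmDelta\bmw+\sum_{F\in\Fh[T]}\int_F\bmPiFn{k}\bmv\cdot(\nabla\bmw)\norTF.
\]
By Step 1, the test quantities $\bmDelta\bmw$ and $(\nabla\bmw)\norTF$ lie in the spaces onto which $\bmPiTD{k}$ and $\bmPiFn{k}$ project, so the projectors may be removed. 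An integration by parts then yields
\[
\int_T\nabla\bmrTPsi{k+1}(\bmITPsik\bmv):\nabla\bmw
= -\int_T\bmv\cdot\bmDelta\bmw+\sum_{F\in\Fh[T]}\int_F\bmv\cdot(\nabla\bmw)\norTF
= \int_T\nabla\bmv:\nabla\bmw,
\]
which is precisely the defining relation \eqref{eq:elliptic.projector} for $\bmSigmaTPsi{k+1}\bmv$.

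\medskip

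\noindent\textbf{Step 3: closure relation.} Since the constant vector fields lie in $\bmPOLY{k}(T)\subset\bmPOLYD{k}(T)$, the projector $\bmPiTD{k}$ preserves mean values on $T$. Combining \eqref{eq:bmrTPsi.closure} with this gives
\[
\int_T\bmrTPsi{k+1}(\bmITPsik\bmv)=\int_T\bmPiTD{k}\bmv=\int_T\bmv,
\]
which matches \eqref{eq:elliptic.projector.closure}. Uniqueness of the extended elliptic projector then forces $\bmrTPsi{k+1}(\bmITPsik\bmv)=\bmSigmaTPsi{k+1}\bmv$, establishing \eqref{eq:commutation.property}. The only place where the structural assumptions \ref{item:U1}--\ref{item:U3} and \ref{item:P1} enter is in guaranteeing that the inclusions in Step 1 make sense as $L^2$ identities, so that the projectors $\bmPiTD{k}$ and $\bmPiFn{k}$ can be applied and removed.
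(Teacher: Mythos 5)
Your proposal is correct and follows essentially the same route as the paper: substitute $\ulbmvT=\bmITPsik\bmv$ into the defining relations of $\bmrTPsi{k+1}$, use the inclusions $\bmDelta\bmPOLYPsi{k+1}(T)\subset\bmPOLYD{k}(T)$ and $\nabla\bmPOLYPsi{k+1}(T)\norTF\subset\bmPOLYn{k}(F)$ to drop the $L^2$ projectors, integrate by parts, and conclude by uniqueness of the extended elliptic projector. Your Steps 1 and 3 merely make explicit two points the paper leaves implicit (the verification of the inclusions and the fact that $\bmPiTD{k}$ preserves means because constants lie in $\bmPOLYD{k}(T)$), which is a welcome addition but not a different argument.
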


\begin{proof}
	Applying equation \eqref{eq:bmrTPsi.def} with $\ulbmvT = \bmITPsik \bmv$ for some $\bmv\in\HONE(T)^d$ yields the following:
	\[
	\int_T \nabla \bmrTPsi{k+1}\bmITPsi{k} \bmv : \nabla \bmw = -\int_T \bmPiTD{k}\bmv \cdot \bmDelta \bmw + \sum_{F\in\Fh[T]}\int_{F} \bmPiFn{k}\bmv \cdot (\nabla \bmw) \norTF.
	\]
	By applying the orthogonality properties of $\bmPiTD{k}$ and $\bmPiFn{k}$ together with $\bmDelta\bmPOLYPsi{k+1}(T) \subset \bmPOLYD{k}(T)$ and $\nabla\bmPOLYPsi{k+1}(T)\norTF \subset \bmPOLYn{k}(F)$, the projectors can be removed as follows,
	\begin{equation}\label{eq:commutation.property.proof.1}
		\int_T \nabla \bmrTPsi{k+1}\bmITPsik \bmv : \nabla \bmw = -\int_T \bmv \cdot \bmDelta \bmw + \sum_{F\in\Fh[T]}\int_{F} \bmv \cdot (\nabla \bmw) \norTF = \int_T \nabla \bmv : \nabla \bmw,
	\end{equation}
	where the final inequality is derived by integrating by parts.
	Substituting $\ulbmvT = \bmITPsik \bmv$ into the closure equation \eqref{eq:bmrTPsi.closure} yields
	\begin{equation}\label{eq:commutation.property.proof.2}
		\int_T (\bmrTPsi{k+1}\bmITPsik \bmv - \bmv) = \bm{0}.
	\end{equation}
	As the elliptic projector is uniquely defined, and equations \eqref{eq:commutation.property.proof.1} and \eqref{eq:commutation.property.proof.2} match \eqref{eq:elliptic.projector} and \eqref{eq:elliptic.projector.closure} identically, it follows that $\bmrTPsi{k+1}\bmITPsik \bmv = \bmSigmaTPsi{k+1}\bmv$.
\end{proof}

Due to the lack of $\HONE$--regularity of the element space $\bmPOLYD{k}(T)$, the usual discrete HHO seminorm (which involves an $\HONE$ seminorm of the element unknowns) is not well-defined. As such, the following alternative seminorm on the discrete space $\bmUTPsi{k}$ is defined: $\HHOnormTPsi{\cdot}:\bmUTPsi{k}\to\R$,
\begin{multline}\label{eq:discrete.norm}
	\HHOnormTPsi{\ulbmvT}^2 := \norm[T]{\nabla \bmrTPsi{k+1}\ulbmvT}^2 + \hT^{-2}\norm[T]{\bmvT - \bmPiTD{k}\bmrTPsi{k+1}\ulbmvT}^2  \\ + \hT^{-1}\sum_{F\in\Fh[T]}\norm[F]{\bmvF - \bmPiFn{k}\bmrTPsi{k+1}\ulbmvT}^2.
\end{multline}
Note the identities,
\[
\HHOnormTPsi{\bmITPsik\bmrTPsi{k+1}\ulbmvT}^2 = \norm[T]{\nabla \bmrTPsi{k+1}\ulbmvT}^2
\]
and
\[
\HHOnormTPsi{\ulbmvT - \bmITPsik\bmrTPsi{k+1}\ulbmvT}^2 = \hT^{-2}\norm[T]{\bmvT - \bmPiTD{k}\bmrTPsi{k+1}\ulbmvT}^2  \\ + \hT^{-1}\sum_{F\in\Fh[T]}\norm[F]{\bmvF - \bmPiFn{k}\bmrTPsi{k+1}\ulbmvT}^2,
\]
which follow readily from the property $\bmrTPsi{k+1}\bmITPsik\bmrTPsi{k+1}\ulbmvT = \bmrTPsi{k+1}\ulbmvT$ (Lemma \ref{lem:commutation.property}) and yield the following result:
\begin{equation}\label{eq:discrete.norm.identity}
	\HHOnormTPsi{\ulbmvT}^2 = \HHOnormTPsi{\bmITPsik\bmrTPsi{k+1}\ulbmvT}^2 + \HHOnormTPsi{\ulbmvT - \bmITPsik\bmrTPsi{k+1}\ulbmvT}^2.
\end{equation}

The local bilinear form $\aT:\bmUTPsi{k}\times\bmUTPsi{k}\to\R$ is defined via
\begin{equation}\label{eq:aT.def}
	\aT(\ulbmvT, \ulbmwT) \defeq \int_T \nabla \bmrTPsi{k+1}\ulbmvT:\nabla \bmrTPsi{k+1}\ulbmwT + \sT(\ulbmvT, \ulbmwT),
\end{equation}
where $\sT:\bmUTPsi{k}\times\bmUTPsi{k}\to\R$ is a stabilisation bilinear form that satisfies the following assumptions.

\begin{assumption}[Assumptions on the stabilisation term]\label{assum:stability}
	${}$
	\begin{enumerate}
		\item The bilinear form $\sT$ is symmetric positive semi-definite.
		\item For all $\ulbmvT\in\bmUTPsi{k}$
		\begin{equation}\label{eq:sT.coercvity.and.boundedness}
			\sT(\ulbmvT, \ulbmvT) \approx \HHOnormTPsi{\ulbmvT - \bmITPsik\bmrTPsi{k+1}\ulbmvT}^2.
		\end{equation}
	\end{enumerate}
\end{assumption}

Given a stabilisation term satisfying Assumption \ref{assum:stability}, the following lemma holds.	

\begin{lemma}
	Let $\sT:\bmUTPsi{k}\times\bmUTPsi{k}\to\R$ be a stabilisation bilinear form satisfying Assumption \ref{assum:stability}. It holds that:
	\begin{enumerate}
		\item For all $\ulbmvT\in \bmUTPsik$ and $\bmw\in\bmPOLYPsi{k+1}(T)$,
		\begin{equation}\label{eq:sT.polynomial.consistency}
			\sT(\ulbmvT, \bmITPsik \bmw) = 0.
		\end{equation} 
		\item For all $\ulbmvT, \ulbmwT \in \bmUTPsik$,
		\begin{equation}\label{eq:stab.dependency}
			\sT(\ulbmvT, \ulbmwT) = \sT(\ulbmvT - \bmITPsik\bmrTPsi{k+1}\ulbmvT, \ulbmwT - \bmITPsik\bmrTPsi{k+1}\ulbmwT).
		\end{equation} 
		\item For all $\ulbmvT\in\bmUTPsik$,
		\begin{equation}\label{eq:aT.norm.equivalence}
			\aT(\ulbmvT, \ulbmvT) \approx \HHOnormTPsi{\ulbmvT}^2.
		\end{equation}		
	\end{enumerate}
\end{lemma}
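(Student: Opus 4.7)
The three parts build on each other, so the plan is to prove them in order using the commutation identity from Lemma \ref{lem:commutation.property}, the norm decomposition \eqref{eq:discrete.norm.identity}, and the stability assumption \eqref{eq:sT.coercvity.and.boundedness}. The main engine is the observation that $\bmITPsik\bmw$ is a fixed point of $\bmITPsik\bmrTPsi{k+1}$ whenever $\bmw\in\bmPOLYPsi{k+1}(T)$, together with the Cauchy--Schwarz inequality available for any symmetric positive semi-definite bilinear form.

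For part (1), I would first note that, since the elliptic projector $\bmSigmaTPsi{k+1}$ is the identity on $\bmPOLYPsi{k+1}(T)$, Lemma \ref{lem:commutation.property} gives $\bmrTPsi{k+1}\bmITPsik\bmw=\bmw$ and hence $\bmITPsik\bmw - \bmITPsik\bmrTPsi{k+1}\bmITPsik\bmw = \underline{\bm{0}}$. Property \eqref{eq:sT.coercvity.and.boundedness} then yields $\sT(\bmITPsik\bmw,\bmITPsik\bmw)=0$, and Cauchy--Schwarz (valid by Assumption \ref{assum:stability}.1) gives $|\sT(\ulbmvT,\bmITPsik\bmw)|\le \sT(\ulbmvT,\ulbmvT)^{1/2}\sT(\bmITPsik\bmw,\bmITPsik\bmw)^{1/2}=0$.

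For part (2), I would expand the right-hand side bilinearly into four terms. Since $\bmrTPsi{k+1}\ulbmvT$ and $\bmrTPsi{k+1}\ulbmwT$ belong to $\bmPOLYPsi{k+1}(T)$, all three cross terms of the form $\sT(\,\cdot\,,\bmITPsik\bmrTPsi{k+1}\ulbmwT)$ or $\sT(\bmITPsik\bmrTPsi{k+1}\ulbmvT,\,\cdot\,)$ vanish by part (1) (using symmetry of $\sT$ for the latter). Only $\sT(\ulbmvT,\ulbmwT)$ survives.

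For part (3), I would use the identity \eqref{eq:discrete.norm.identity}, which together with the first identity noted just above it gives $\HHOnormTPsi{\ulbmvT}^2 = \norm[T]{\nabla \bmrTPsi{k+1}\ulbmvT}^2 + \HHOnormTPsi{\ulbmvT - \bmITPsik\bmrTPsi{k+1}\ulbmvT}^2$. By definition \eqref{eq:aT.def} and the equivalence \eqref{eq:sT.coercvity.and.boundedness}, the right-hand side is equivalent to $\norm[T]{\nabla\bmrTPsi{k+1}\ulbmvT}^2 + \sT(\ulbmvT,\ulbmvT) = \aT(\ulbmvT,\ulbmvT)$, completing the proof. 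The only mildly subtle point in the whole argument is the use of Cauchy--Schwarz in part (1), so I would spell that inequality out explicitly; the rest reduces to substitutions and bilinearity.
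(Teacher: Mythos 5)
Your proposal is correct and follows essentially the same route as the paper's own proof: part (1) via the commutation property and the Cauchy--Schwarz inequality for the positive semi-definite form $\sT$, part (2) by bilinear expansion and part (1), and part (3) by combining the norm decomposition \eqref{eq:discrete.norm.identity} with \eqref{eq:sT.coercvity.and.boundedness}. No gaps.
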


\begin{proof}
	Consider equation \eqref{eq:sT.coercvity.and.boundedness} with $\ulbmvT = \bmITPsik \bmw$ and $\bmw\in\bmPOLYPsi{k+1}(T)$:
	\[
	\sT(\bmITPsik \bmw, \bmITPsik \bmw) \approx \HHOnormTPsi{\bmITPsik \bmw - \bmITPsik\bmrTPsi{k+1}\bmITPsik w}^2 = \HHOnormTPsi{\bmITPsik \bmw - \bmITPsik \bmw}^2 = 0,
	\]
	where the second equality follows from $\bmrTPsi{k+1}\bmITPsik \bmw = \bmSigmaTPsi{k+1}\bmw =\bmw $ due to the commutation property \eqref{eq:commutation.property} and the invariance of $\bmSigmaTPsi{k+1}$ on $\bmPOLYPsi{k+1}(T)$. As $\sT$ is assumed to be bilinear and symmetric positive semi-definite, the following Cauchy--Schwarz inequality holds for all $\ulbmvT\in \bmUTPsik$ and $\bmw\in\bmPOLYPsi{k+1}(T)$:
	\[
	|\sT(\ulbmvT, \bmITPsik \bmw)| \le \sT(\ulbmvT, \ulbmvT)^\frac12\sT(\bmITPsik \bmw, \bmITPsik \bmw)^\frac12 = 0,
	\]
	thus proving \eqref{eq:sT.polynomial.consistency}. Equation \eqref{eq:stab.dependency} follows from expanding
	\[
	\sT(\ulbmvT - \bmITPsik\bmrTPsi{k+1}\ulbmvT, \ulbmwT - \bmITPsik\bmrTPsi{k+1}\ulbmwT)
	\]
	due to the bilinearity of $\sT$ and applying equation \eqref{eq:sT.polynomial.consistency}. To prove \eqref{eq:aT.norm.equivalence} consider equation \eqref{eq:discrete.norm.identity} and apply the assumption \eqref{eq:sT.coercvity.and.boundedness}:
	\begin{align*}
		\HHOnormTPsi{\ulbmvT}^2 \eq \HHOnormTPsi{\bmITPsik\bmrTPsi{k+1}\ulbmvT}^2 + \HHOnormTPsi{\ulbmvT - \bmITPsik\bmrTPsi{k+1}\ulbmvT}^2 \nl
		\approx{}& \HHOnormTPsi{\bmITPsik\bmrTPsi{k+1}\ulbmvT}^2 + \sT(\ulbmvT, \ulbmvT) \nl
		\eq \norm[T]{\nabla\bmrTPsi{k+1}\ulbmvT}^2 + \sT(\ulbmvT, \ulbmvT) = \aT(\ulbmvT, \ulbmvT),
	\end{align*}
thus completing the proof.
\end{proof}

The stabilisation bilinear form,
\begin{multline}\label{eq:sT}
	\sT(\ulbmuT, \ulbmvT) \defeq \hT^{-2}\int_T(\bmuT - \bmPiT{k}\bmrTPsi{k+1}\ulbmuT)\cdot(\bmvT - \bmPiT{k}\bmrTPsi{k+1}\ulbmvT)  \\ + \hT^{-1}\sum_{F\in\Fh[T]}\int_F(\bmuF - \bmPiF{k}\bmrTPsi{k+1}\ulbmuT)\cdot(\bmvF - \bmPiF{k}\bmrTPsi{k+1}\ulbmvT),
\end{multline}
is considered throughout the rest of this paper. It follows trivially that $\sT$ satisfies Assumption \ref{assum:stability}. Indeed,
\[
\sT(\ulbmvT, \ulbmvT) = \HHOnormTPsi{\ulbmvT - \bmITPsik\bmrTPsi{k+1}\ulbmvT}^2.
\]

\subsection{Discretisation of the Pressure}

The local discrete pressure space is defined as:
\[
\POLYPhi{k}(T) := \POLY{k}(T) + \Phi(T).
\]
The divergence reconstruction $\DTPhi{k}:\bmUTPsi{k}\to\POLYPhi{k}(T)$ is defined to satisfy
\begin{equation}\label{eq:DTPhi.def}
	\int_T \DTPhi{k} \ulbmvT q = -\int_T \bmvT \cdot \nabla q + \sum_{F\in\Fh[T]}\int_{F} \bmvF \cdot q\norT \qquad \forall q \in \POLYPhi{k}(T).
\end{equation}
Similar to the velocity reconstruction, the divergence reconstruction is designed to produce the projection of the divergence when commuted with the interpolator. This is clarified in the following lemma. 

Denote by $\PiTPhi{k}$ the $L^2$-orthogonal projector on the space $\POLYPhi{k}(T)$.
\begin{lemma}\label{lem:DT.commutation.property}
	The following identity holds true:
	\begin{equation}\label{eq:DT.commutation.property}
		\DTPhi{k} \circ \bmITPsik = \PiTPhi{k} \DIV.
	\end{equation}
\end{lemma}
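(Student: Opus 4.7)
The plan is to mirror the proof of Lemma \ref{lem:commutation.property}, replacing the elliptic projector with the $L^2$-orthogonal projector $\PiTPhi{k}$. Fix $\bmv\in\HONE(T)^d$ and apply the defining equation \eqref{eq:DTPhi.def} to $\ulbmvT = \bmITPsik \bmv = (\bmPiTD{k}\bmv, (\bmPiFn{k}\bmv)_{F\in\Fh[T]})$, so that for every $q\in\POLYPhi{k}(T)$,
\[
\int_T \DTPhi{k}(\bmITPsik\bmv)\, q = -\int_T \bmPiTD{k}\bmv \cdot \nabla q + \sum_{F\in\Fh[T]}\int_F \bmPiFn{k}\bmv \cdot q\norT.
\]
The goal is to remove the two projectors and then recognise an integration by parts, which will give $\int_T (\DIV \bmv)\, q$; this identifies $\DTPhi{k}\bmITPsik \bmv$ with $\PiTPhi{k}\DIV \bmv$ by definition of the orthogonal projector.

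The key point is therefore to verify the two inclusions $\nabla \POLYPhi{k}(T)\subset \bmPOLYD{k}(T)$ and $\POLYPhi{k}(T)\norT|_F \subset \bmPOLYn{k}(F)$ for each $F\in\Fh[T]$. Writing a generic $q = p+\phi$ with $p\in\POLY{k}(T)$ and $\phi\in\Phi(T)$, the bulk inclusion is immediate: $\nabla p \in \bmPOLY{k-1}(T)\subset \bmPOLY{k}(T)$ and $\nabla \phi \in \nabla\Phi(T)$, both of which sit inside $\bmPOLYD{k}(T) = \bmPOLY{k}(T)+\bmDelta\bmPsi(T)+\nabla\Phi(T)$. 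For the trace, $\phi\norT|_F = \phi\norTF$ lies in $\Phi(T)\norTF \subset \bmPOLYn{k}(F)$ directly. For the polynomial part, using $\norT|_F = \pm \norF$ and the tensorial definition \eqref{eq:curved.face.space}, one writes $p\norT|_F = (\pm p\,\bmI)\norF$ with $p\,\bmI \in (\POLY{k}_d)^{d\times d}$, which belongs to $(\POLY{k}_d)^{d\times d}|_F \norF \subset \bmPOLY{k}(F)$.

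With these inclusions in hand, the orthogonality properties of $\bmPiTD{k}$ and $\bmPiFn{k}$ allow the projectors to be dropped, giving
\[
\int_T \DTPhi{k}(\bmITPsik \bmv)\, q = -\int_T \bmv \cdot \nabla q + \sum_{F\in\Fh[T]}\int_F \bmv \cdot q\norT = \int_T (\DIV \bmv)\, q,
\]
the last equality being a standard integration by parts (valid since $\bmv\in\HONE(T)^d$ and $q\in\HONE(T)$). Since $\DTPhi{k}(\bmITPsik\bmv)\in\POLYPhi{k}(T)$ and the identity holds for every test $q\in\POLYPhi{k}(T)$, the definition of the $L^2$-orthogonal projector yields $\DTPhi{k}\bmITPsik\bmv = \PiTPhi{k}\DIV\bmv$.

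The only nontrivial step is the verification of $p\norT|_F \in \bmPOLY{k}(F)$ on curved faces; everything else is a direct application of the projection property followed by integration by parts, so I do not anticipate any serious obstacle.
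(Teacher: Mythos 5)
Your proposal is correct and follows essentially the same route as the paper: substitute $\ulbmvT = \bmITPsik\bmv$ into \eqref{eq:DTPhi.def}, drop the projectors $\bmPiTD{k}$ and $\bmPiFn{k}$ via the inclusions $\nabla\POLYPhi{k}(T)\subset\bmPOLYD{k}(T)$ and $\POLYPhi{k}(T)\norTF\subset\bmPOLYn{k}(F)$, integrate by parts, and identify the result with $\PiTPhi{k}\DIV\bmv$ by testing against all of $\POLYPhi{k}(T)$. The only difference is that you spell out the verification of the two inclusions (including the curved-face case via $p\norTF = (\pm p\,\bmI)\norF$), which the paper leaves implicit by analogy with Lemma \ref{lem:commutation.property}; this is a correct and welcome addition rather than a divergence.
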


\begin{proof}
	Substituting $\ulbmvT = \bmITPsik \bmv$ into equation \eqref{eq:DTPhi.def} yields
	\begin{equation}\label{eq:DT.commutation.property.proof.1}
		\int_T \DTPhi{k} \bmITPsik \bmv q = -\int_T \bmPiTD{k}\bmv \cdot \nabla q + \sum_{F\in\Fh[T]}\int_{F} \bmPiFn{k}\bmv \cdot q \norT \qquad \forall q \in \POLYPhi{k}(T).
	\end{equation}
	Analogously to the proof of Lemma \ref{lem:commutation.property}, the $L^2$-orthogonal projectors $\bmPiTD{k}$ and $\bmPiFn{k}$ may be removed due to the inclusions $\nabla\POLYPhi{k}(T) \subset \bmPOLYD{k}(T)$ and $\POLYPhi{k}(T)\norTF \subset \bmPOLYn{k}(F)$ to yield,
	\begin{equation}\label{eq:DT.commutation.property.proof.1}
		\int_T \DTPhi{k} \bmITPsik \bmv q = -\int_T \bmv \cdot \nabla q + \sum_{F\in\Fh[T]}\int_{F} \bmv \cdot q \norT = \int_T \DIV \bmv q = \int_T \PiTPhi{k} \DIV \bmv q,
	\end{equation}
	which follows by integrating by parts and introducing the projector $\PiTPhi{k}$. Rearranging and setting $q = \DTPhi{k} \ulbmvT - \PiTPhi{k} \DIV \bmv \in \POLYPhi{k}(T)$, it is inferred that
	\[
	\int_T (\DTPhi{k} \bmITPsik \bmv - \PiTPhi{k} \DIV \bmv)^2 = 0,
	\]
	as required.
\end{proof}

The discretisation of the continuous form $\rmb$ is realised through the local discrete bilinear form $\bT:\bmUTPsi{k}\times\POLYPhi{k}(T)\to\R$, defined as:
\begin{equation}\label{eq:bT.def}
	\bT(\ulbmvT, q) := -\int_T \DTPhi{k} \ulbmvT q.
\end{equation}

\section{Global Formulation and Main Results}\label{sec:main.results}	

The global discrete space is defined as
\[
\bmUhPsik \defeq \bigtimes_{T\in\Th}\bmPOLYD{k}(T) \times \bigtimes_{F\in\Fh}\bmPOLYn{k}(F),
\]
and is equipped with the global seminorm $\HHOnormhPsi{\cdot}:\bmUhPsik\to\R$ and interpolator $\bmIhPsik:\HONE(\Omega)^d\to\bmUhk$ defined via
\begin{align*}
	\HHOnormhPsi{\ulbmvh}^2 :\eq \sum_{T\in\Th} \HHOnormTPsi{\ulbmvT}^2\qquad\forall \ulbmvh\in\bmUhk,\\
	\bmIhPsik \bmv \big|_T \eq \bmITPsik \bmv\qquad\forall \bmv\in\HONE(\Omega)^d.
\end{align*}

Similarly, for all $\ulbmvh, \ulbmwh\in\bmUhPsik$ and $q_h\in\POLYPhi{k}(\Th)$, define
\begin{equation}\label{eq:ah.and.bh.def}
	\ah(\ulbmvh, \ulbmwh) := \sum_{T\in\Th}\aT(\ulbmvT, \ulbmwT), \qquad \bh(\ulbmvh, q_h) \defeq \sum_{T\in\Th}\bT(\ulbmvT, q_T).
\end{equation}

The boundary conditions on $\bmu$ are accounted for in the following homogeneous subspace of $\bmUhPsi{k}$:
\begin{equation*}
	\bmUhPsizr{k} := \Big\{\ulbmvh=((\bmvT)_{T\in\Th},(\bmvF)_{F\in\Fh}) \in \bmUhPsi{k} : \bmvF = \bm{0}\quad\forall F \subset \partial\Omega \Big\}.
\end{equation*}
The global discrete pressure space is defined as
\begin{equation}\label{def:Pk0}
	\POLYPhizr{k}(\Th)\defeq \left\{z\in \LTWO(\Omega)\,:\,z_{|T}\in\POLYPhi{k}(T)\quad\forall T\in\Th\,,\quad\int_\Omega z=0\right\}.
\end{equation}
The following coupled space is also defined,
\begin{equation}\label{eq:coupled.space.def}
	\bmXhPsiPhik \defeq \bmUhPsizrk\times\POLYPhizr{k},
\end{equation}
and equipped with the norm,
\begin{equation}\label{eq:coupled.space.norm.def}
	\norm[X, h]{(\ulbmvh, q_h)}^2 := \nu \HHOnormhPsi{\ulbmvh}^2 + \nu^{-1}\norm[\Omega]{q_h}^2.
\end{equation}
The discrete problem then reads: find $(\ulbmuh, p_h) \in \bmXhPsiPhik$ such that
\begin{subequations}\label{eq:discrete}
	\begin{align}	
		\nu \ah(\ulbmuh, \ulbmvh) + \bh(\ulbmvh, p_h) \eq \int_\Omega \bmf \cdot \bmvh \qquad \forall \ulbmvh\in\bmUhPsizrk, \label{eq:discrete.fluid}\\
		-\bh(\ulbmuh, q_h) \eq 0 \qquad \forall q_h\in \POLYPhizr{k}(\Th) \label{eq:discrete.fluid.div}.
	\end{align}
\end{subequations}

The main results are as follows:

\begin{theorem}[Existence and uniqueness]\label{thm:existence.and.uniqueness}
	There exists a unique solution $(\ulbmuh, p_h)\in\bmXhPsiPhik$ to problem \eqref{eq:discrete}.
\end{theorem}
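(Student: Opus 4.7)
The plan is to reduce \eqref{eq:discrete} to the standard injectivity argument for square finite-dimensional linear systems. Since existence and uniqueness are equivalent in this setting, it suffices to show that the only solution of the homogeneous problem (with $\bmf = \bm 0$) is $(\ulbmuh, p_h) = (\ul{\bm 0}, 0)$. Setting $\bmf = \bm 0$, testing \eqref{eq:discrete.fluid} with $\ulbmvh = \ulbmuh$ and \eqref{eq:discrete.fluid.div} with $q_h = p_h$, and adding the two identities yields $\nu \ah(\ulbmuh, \ulbmuh) = 0$. The elementwise norm equivalence \eqref{eq:aT.norm.equivalence} then gives $\HHOnormhPsi{\ulbmuh} = 0$. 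A discrete Poincar\'e-type argument -- using that $\nabla\bmrTPsi{k+1}\ulbmuT$ vanishes on every $T$, that the element and face unknowns coincide with projections of the locally constant reconstructions, and that face unknowns on $\partial\Omega$ are zero by definition of $\bmUhPsizrk$ -- then forces $\ulbmuh = \ul{\bm 0}$.

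With $\ulbmuh = \ul{\bm 0}$, equation \eqref{eq:discrete.fluid} reduces to $\bh(\ulbmvh, p_h) = 0$ for every $\ulbmvh \in \bmUhPsizrk$, so concluding $p_h = 0$ amounts to establishing a discrete inf--sup condition of the form $\sup_{\ulbmvh}\, \bh(\ulbmvh, p_h)/\HHOnormhPsi{\ulbmvh} \gtrsim \norm[\Omega]{p_h}$ for every $p_h \in \POLYPhizr{k}(\Th)$. The plan is a Fortin-type construction: invoke the continuous surjectivity of $\nabla\cdot : \HONEzr(\Omega)^d \to \LTWO_0(\Omega)$ to pick $\bmv \in \HONEzr(\Omega)^d$ satisfying $\nabla\cdot\bmv = p_h$ together with $\norm[\HONE(\Omega)]{\bmv}\lesssim\norm[\Omega]{p_h}$, and then set $\ulbmvh := -\bmIhPsik\bmv$, which lies in $\bmUhPsizrk$ thanks to $\bmv|_{\partial\Omega} = \bm 0$. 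The commutation property of the divergence reconstruction (Lemma \ref{lem:DT.commutation.property}) gives $\DTPhi{k}\ulbmvT = -\PiTPhi{k}(\nabla\cdot\bmv) = -p_h|_T$, since $p_h|_T \in \POLYPhi{k}(T)$, whence $\bh(\ulbmvh, p_h) = \norm[\Omega]{p_h}^2$. Paired with a continuity estimate $\HHOnormhPsi{\bmIhPsik\bmv}\lesssim\norm[\HONE(\Omega)]{\bmv}$, this produces the desired inf--sup and forces $p_h = 0$.

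The hard part is establishing the continuity of the enriched interpolator $\bmIhPsik$ in the norm $\HHOnormhPsi{\cdot}$. The commutation $\bmrTPsi{k+1}\bmITPsik\bmv = \bmSigmaTPsi{k+1}\bmv$ from Lemma \ref{lem:commutation.property} reduces the gradient term of $\HHOnormTPsi{\bmITPsik\bmv}^2$ to $\norm[T]{\nabla\bmSigmaTPsi{k+1}\bmv}^2\le\norm[T]{\nabla\bmv}^2$ by the $\HONE$-orthogonality built into $\bmSigmaTPsi{k+1}$; the element and face residual terms should then be controlled using $\LTWO$-stability of the projectors $\bmPiTD{k}$ and $\bmPiFn{k}$, approximation properties of $\bmSigmaTPsi{k+1}$, and the continuous trace inequality \eqref{small.faces:eq:continuous.trace}. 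The regularity assumptions \ref{item:U1}--\ref{item:U3} are exactly what is needed to ensure that these projectors and reconstructions are well-defined on the enriched spaces.
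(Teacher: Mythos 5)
Your proposal is correct, and every ingredient you use is present in the paper; the difference is purely in the packaging. The paper disposes of this theorem in one line by citing the abstract well-posedness result of Ern--Guermond, with all the real work deferred to the quantitative $\inf$--$\sup$ estimate of Theorem \ref{thm:inf.sup.stability}; that estimate is in turn assembled from exactly the three facts you isolate: coercivity of $\ah$ via \eqref{eq:aT.norm.equivalence}, the continuous surjectivity of the divergence combined with the consistency \eqref{eq:bh.consistency.1} (equivalently, the commutation property of Lemma \ref{lem:DT.commutation.property}), and the interpolator bound of Lemma \ref{lem:bmITPsik.bound} --- the last being precisely the ``hard part'' you flag, and your sketch of it matches the paper's proof. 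What your route buys is that, for existence and uniqueness alone, you can bypass both the abstract saddle-point lemma and the quantitative constant: finite dimensionality of $\bmXhPsiPhik$ reduces everything to triviality of the kernel, and for the pressure you only need to exhibit a single $\ulbmvh$ with $\bh(\ulbmvh,p_h)=\norm[\Omega]{p_h}^2$, so the continuity of $\bmIhPsik$ is not strictly needed at that point. What the paper's route buys is that the quantitative $\inf$--$\sup$ is required anyway for the error estimate of Theorem \ref{thm:error.estimate}, so nothing is saved by the elementary argument in the context of the whole paper. Two small points worth making explicit in your write-up: the ``discrete Poincar\'e-type argument'' that $\HHOnormhPsi{\cdot}$ is a norm on $\bmUhPsizrk$ relies on constants belonging to both $\bmPOLYD{k}(T)$ and $\bmPOLYn{k}(F)$ (true by construction, since $\bmPOLY{0}$ is contained in each) and on propagating the locally constant reconstructions across interfaces via the single-valued face unknowns to the boundary; and the surjectivity of $\nabla\cdot$ onto $\LTWO_0(\Omega)$ applies to $p_h$ only because the zero-mean constraint is built into $\POLYPhizr{k}(\Th)$.
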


\begin{theorem}[Error estimate]\label{thm:error.estimate}
	The discrete solution $(\ulbmuh, p_h)\in\bmUhPsizrk\times\POLYPhizr{k}$ to problem \eqref{eq:discrete} satisfies the following error estimate:
	\begin{equation}\label{eq:error.estimate}
		\norm[X, h]{(\ulbmuh - \bmIhPsik \bmu, p_h - \SigmahPhi{k} p)} \lesssim h^{k+1}\Brac{\nu^\frac12\seminorm[\HS{k+2}(\Th)^d]{\bmu_r} + \nu^{-\frac12}\seminorm[\HS{k+1}(\Th)]{p_r}},
	\end{equation}
	where $(\bmu, p) = (\bmu_r + \bmpsi, p_r + \phi) \in \HONE(\Omega)^d \times \LTWO_0(\Omega)$ is the unique solution to problem \eqref{eq:stokes.weak.form}  satisfying the additional regularity $\bmDelta\bmu \in \LTWO(\Omega)^d$ and $p \in \HONE(\Omega)$ with $(\bmu_r, p_r)\in\HS{k+2}(\Th)^d\times\HS{k+1}(\Th)$ denoting the regular parts of the solution and $(\bmpsi, \phi)\in\bmPsi(\Th)\times\Phi(\Th)$ the singular parts of the solution.
\end{theorem}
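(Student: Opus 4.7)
I would invoke a standard saddle-point stability framework. Theorem \ref{thm:existence.and.uniqueness} will rely on a discrete inf--sup compatibility of $\ah$ and $\bh$ on $\bmXhPsiPhik$, providing a bound of $\norm[X,h]{(\ulbmvh,q_h)}$ by the dual norm of the block residual $(\ulbmwh, r_h)\mapsto \nu\ah(\ulbmvh,\ulbmwh) + \bh(\ulbmwh, q_h) - \bh(\ulbmvh, r_h)$. Applied to the errors $(\ulbm{e}_\subh, \varepsilon_h) \defeq (\ulbmuh - \bmIhPsik\bmu,\, p_h - \SigmahPhi{k} p)$, this reduces the proof to estimating two consistency residuals produced by substituting $(\bmIhPsik\bmu, \SigmahPhi{k} p)$ into \eqref{eq:discrete}.

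The mass residual $\bh(\bmIhPsik\bmu, r_h)$ vanishes identically: by Lemma \ref{lem:DT.commutation.property} and \eqref{eq:bT.def} it equals $-\sum_{T\in\Th}\int_T \PiTPhi{k}(\DIV\bmu)\, r_T$, which is zero thanks to \eqref{eq:stokes.mass}. The entire error therefore comes from the momentum residual $\calE_h(\ulbmwh) \defeq \int_\Omega\bmf\cdot\bmwh - \nu\ah(\bmIhPsik\bmu,\ulbmwh) - \bh(\ulbmwh,\SigmahPhi{k} p)$. I would process $\calE_h$ elementwise: Lemma \ref{lem:commutation.property} combined with \eqref{eq:elliptic.projector} rewrites the leading $\aT$ contribution as $\int_T\nabla\bmu:\nabla\bmrTPsi{k+1}\ulbmwT + \sT(\bmITPsik\bmu,\ulbmwT)$, and the additional regularity $\bmDelta\bmu\in\LTWO(\Omega)^d$ legitimises an elementwise integration by parts. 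Pairing the resulting volume and boundary terms with the pointwise identity $\bmf = -\nu\bmDelta\bmu + \nabla p$ and with the boundary pieces generated by the definition \eqref{eq:DTPhi.def} of $\DTPhi{k}$, then telescoping inter-element face contributions via single-valuedness of the exact traces of $\bmu$ and $p$ (together with $\bmwF=\bm0$ on $\partial\Omega$), $\calE_h$ collapses into sums of expressions of the form $\int_T \bmDelta\bmu\cdot(\bmwT - \bmrTPsi{k+1}\ulbmwT)$ and $\int_T \nabla(p-\SigmaTPhi{k} p)\cdot\bmwT$, their face analogues tested against $(\nabla\bmu)\norTF$ and $(p-\SigmaTPhi{k} p)\norTF$, plus the stabilisation piece $\sum_{T\in\Th}\sT(\bmITPsik\bmu,\ulbmwT)$.

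The crux is to bound these residuals by $h^{k+1}$ times Sobolev seminorms of the \emph{regular parts} $\bmu_r$ and $p_r$ only. By Assumption \ref{assum:regularity} and the definitions of the enriched spaces, the projectors $\bmPiTD{k}$, $\bmPiFn{k}$ and $\SigmaTPhi{k}$ fix the singular parts (so $\bmPiTD{k}(\bmDelta\bmpsi) = \bmDelta\bmpsi$, $\bmPiFn{k}(\nabla\bmpsi\norTF) = \nabla\bmpsi\norTF$ and $\SigmaTPhi{k}\phi = \phi$), and likewise $\bmSigmaTPsi{k+1}\bmpsi = \bmpsi$ is immediate from \eqref{eq:elliptic.projector}--\eqref{eq:elliptic.projector.closure}. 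Inserting the corresponding $L^2$-orthogonality identities at each summand reduces every contribution to an approximation error of $\bmu_r$ or $p_r$. Since $\bmPOLY{k+1}(T) \subset \bmPOLYPsi{k+1}(T)$, $\bmPOLY{k}(T) \subset \bmPOLYD{k}(T)$, $\bmPOLY{k}(F) \subset \bmPOLYn{k}(F)$ and $\POLY{k}(T) \subset \POLYPhi{k}(T)$, the enriched projector errors on smooth functions are dominated by the classical polynomial projector errors, yielding the expected $h^{k+1}$ rate against $\seminorm[\HS{k+2}(\Th)^d]{\bmu_r}$ and $\seminorm[\HS{k+1}(\Th)]{p_r}$; the stabilisation term is treated identically via \eqref{eq:stab.dependency}, which reduces it to an $L^2$ analogue of the same approximation errors. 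The main obstacle is the careful bookkeeping required to ensure that velocity-consistency summands collect the prefactor $\nu^{1/2}$ while pressure-consistency summands collect $\nu^{-1/2}$, so that the final combined bound matches the viscosity-weighted structure of \eqref{eq:error.estimate}.
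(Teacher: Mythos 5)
Your overall architecture is exactly the paper's: the error pair $(\ulbmuh-\bmIhPsik\bmu,\,p_h-\SigmahPhi{k}p)$ is fed into the $\inf$--$\sup$ stability estimate \eqref{eq:inf.sup.stability}, the mass residual vanishes by the commutation property \eqref{eq:DT.commutation.property} together with $\nabla\cdot\bmu=0$ (this is \eqref{eq:bh.consistency.1} applied to $\bmu$), and the momentum residual splits into the consistency errors of $\ah$ and $\bh$, which you re-derive inline rather than citing Theorems \ref{thm:ah.consistency} and \ref{thm:bh.consistency}; the $\nu^{\pm\frac12}$ weights then fall out of normalising the test pair in $\norm[X,h]{\cdot}$, exactly as in the paper's use of the Third Strang Lemma.

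The one step whose justification would fail if executed literally is the final bound on the velocity residual terms. After the reductions you describe, the volume and face contributions involve $\bmDelta(\cdot)$ in $\LTWO(T)^d$ and $\nabla(\cdot)\norTF$ in $\LTWO(F)^d$, and you propose to control them by asserting that the enriched projector errors are ``dominated by the classical polynomial projector errors''. The minimisation property of $\bmSigmaTPsi{k+1}$ gives domination only in the $\HONE(T)^d$ seminorm (the norm it minimises); it gives no control of $\norm[T]{\bmDelta(\bmu_r-\bmSigmaTPsi{k+1}\bmu_r)}$ or of $\norm[F]{\nabla(\bmu_r-\bmSigmaTPsi{k+1}\bmu_r)\norTF}$, and the inverse and trace inequalities that would transfer an $\HONE$ bound into those norms are unavailable on the enriched space --- a difficulty the paper flags explicitly at the start of Section \ref{sec:proofs}. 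The paper circumvents this by never letting the enriched elliptic projector enter those residual terms: the identity \eqref{eq:ah.consistency.consistency.proof.5} holds for \emph{every} $\bmz\in\bmPOLYPsi{k+1}(T)$, and choosing $\bmz=\bmpsi+\bmSigmaT{k+1}\bmu_r$ (exact singular part plus the \emph{classical} elliptic projector of the regular part) leaves residuals in $\bmu_r-\bmSigmaT{k+1}\bmu_r$ only, for which \parencite[Theorem 1.48]{di-pietro.droniou:2020:hybrid} supplies the required $\LTWO$ bounds on second derivatives and on traces of gradients. Your treatment of the pressure residual is fine as stated, because there all terms are first reduced, via the trace inequality \eqref{small.faces:eq:continuous.trace} and a Poincar\'{e}--Wirtinger inequality, to $\hT\norm[T]{\nabla(p_r-\SigmaTPhi{k}p_r)}$, which \emph{is} the seminorm minimised by $\SigmaTPhi{k}$, so replacing it by $\SigmaT{k}$ is legitimate; the stabilisation term is likewise safe since Lemma \ref{lem:sT.consistency} only needs the $\HONE$ minimisation property.
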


\section{Proof of Main Results}\label{sec:proofs}	

In this section, the proofs of the main results stated in Section \ref{sec:main.results} are provided. The proof of these estimates is quite complex due to the limited regularity of the unknowns, and lack of typical functional analysis tools such as discrete inverse inequalities. Specifically, the proofs rely on the consistency of each of the discrete bilinear forms $\ah$ and $\bh$ stated in Theorems \ref{thm:ah.consistency} and \ref{thm:bh.consistency}, and the $\inf$--$\sup$ stability of the saddle-point problem \eqref{eq:discrete} as stated in Theorem \ref{thm:inf.sup.stability}. 

Many of the results in this section echo those for the regular (non-enriched) HHO discretisation of the Stokes scheme \parencite[Section 8]{di-pietro.droniou:2020:hybrid}. However, the proofs presented in this paper possess many differences. Notably, assumptions about $\HONE(T)^d$-regularity of $\bmPOLYD{k}(T)$ were not made at any stage. Consequently, gradients or Dirichlet traces of the element unknowns cannot be considered. This necessity led to the definition of the alternate discrete norm \eqref{eq:coupled.space.norm.def} under which stability and error estimates are stated. The enrichment of discrete polynomial spaces with non-polynomial functions further complicates matters, as the validity of usual discrete inverse inequalities is unknown. Additionally, the approximation properties of each extended projector in any norm other than that induced by the inner-product under which it is defined are unknown. 

\begin{lemma}
	It holds, for all $\ulbmvT\in\bmUTPsik$, that
	\begin{equation}\label{eq:discrete.terms.bound}
		\hT^{-1}\norm[T]{\bmvT - \bmrTPsi{k+1}\ulbmvT} + \hT^{-\frac12}\Brac{\sum_{F\in\Fh[T]}\norm[F]{\bmvF - \bmrTPsi{k+1}\ulbmvT}^2}^\frac12 + \sT(\ulbmvT, \ulbmvT)^\frac12 \lesssim \HHOnormTPsi{\ulbmvT}.
	\end{equation}
\end{lemma}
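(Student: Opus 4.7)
The plan is to bound each of the three quantities on the left-hand side by $\HHOnormTPsi{\ulbmvT}$ by inserting $\bmrTPsi{k+1}\ulbmvT$ (or its projection) as an intermediate point in a triangle inequality. The key observation is that the spaces $\bmPOLYD{k}(T)$ and $\bmPOLYn{k}(F)$ both contain the constant functions (via the inclusions $\bmPOLY{0}(T)\subset\bmPOLY{k}(T)\subset\bmPOLYD{k}(T)$ and $\bmPOLY{0}_d|_F\subset\bmPOLY{k}(F)\subset\bmPOLYn{k}(F)$), so that the $L^2$-projection error of any $H^1$-function onto either space is controlled by the Poincaré–Wirtinger inequality.

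For the first (element) term I would decompose
\[
\bmvT - \bmrTPsi{k+1}\ulbmvT = \bigl(\bmvT - \bmPiTD{k}\bmrTPsi{k+1}\ulbmvT\bigr) + \bigl(\bmPiTD{k}\bmrTPsi{k+1}\ulbmvT - \bmrTPsi{k+1}\ulbmvT\bigr).
\]
The first piece is controlled after multiplication by $\hT^{-1}$ by $\HHOnormTPsi{\ulbmvT}$ directly from the definition \eqref{eq:discrete.norm}. For the second piece, since constants lie in $\bmPOLYD{k}(T)$ and $\bmrTPsi{k+1}\ulbmvT\in\bmPOLYPsi{k+1}(T)\subset H^1(T)^d$, I bound it by $\|\bmrTPsi{k+1}\ulbmvT-c\|_T$ with $c$ the $T$-mean of $\bmrTPsi{k+1}\ulbmvT$ and apply Poincaré–Wirtinger to get $\lesssim h_T\|\nabla\bmrTPsi{k+1}\ulbmvT\|_T\le h_T\HHOnormTPsi{\ulbmvT}$. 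The face term is handled identically, replacing $\bmPiTD{k}$ by $\bmPiFn{k}$, except that to control $\|\bmrTPsi{k+1}\ulbmvT-c\|_F$ one first invokes the continuous trace inequality \eqref{small.faces:eq:continuous.trace} and then Poincaré–Wirtinger on $T$.

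For the stabilisation term I use the explicit definition \eqref{eq:sT}. By the triangle inequality,
\[
\|\bmvT-\bmPiT{k}\bmrTPsi{k+1}\ulbmvT\|_T\le \|\bmvT-\bmrTPsi{k+1}\ulbmvT\|_T+\|\bmrTPsi{k+1}\ulbmvT-\bmPiT{k}\bmrTPsi{k+1}\ulbmvT\|_T,
\]
and likewise on each face. The first summand is controlled by the element bound just proved; the second is again a projection error of an $H^1$-function onto a space containing constants, so Poincaré–Wirtinger (in the element case) and the continuous trace inequality combined with Poincaré–Wirtinger (in the face case) yield the requisite $h_T$ and $h_T^{1/2}$ scaling. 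Summing the squared contributions then gives $\sT(\ulbmvT,\ulbmvT)^{1/2}\lesssim \HHOnormTPsi{\ulbmvT}$.

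No step is expected to be a serious obstacle: all three bounds reduce to standard projection-error arguments once the right intermediate quantity is inserted. The only subtlety worth stressing in the write-up is that the Poincaré–Wirtinger step uses $\bmrTPsi{k+1}\ulbmvT\in H^1(T)^d$ (provided by $\bmPOLYPsi{k+1}(T)\subset H^1(T)^d$) and that the enrichment spaces $\bmPOLYD{k}(T)$ and $\bmPOLYn{k}(F)$ do contain constants — this is what allows one to convert the projection error onto an enriched (and possibly non-polynomial) space into an $H^1$-seminorm estimate without ever needing a discrete inverse inequality on the enrichment functions.
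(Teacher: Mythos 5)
Your proposal is correct and follows essentially the same route as the paper: insert $\bmPiTD{k}\bmrTPsi{k+1}\ulbmvT$ (resp.\ $\bmPiFn{k}\bmrTPsi{k+1}\ulbmvT$) as the intermediate point, use that constants belong to $\bmPOLYD{k}(T)$ and $\bmPOLYn{k}(F)$ to replace the enriched projection error by the distance to $\bmPiT{0}\bmrTPsi{k+1}\ulbmvT$, and conclude with Poincar\'e--Wirtinger and the continuous trace inequality \eqref{small.faces:eq:continuous.trace}. The only (harmless) divergence is the stabilisation term: the paper dispatches it in one line from Assumption \ref{assum:stability} via \eqref{eq:sT.coercvity.and.boundedness} and the orthogonality identity \eqref{eq:discrete.norm.identity}, which covers any admissible $\sT$, whereas your argument from the explicit formula \eqref{eq:sT} is valid but tied to that particular choice.
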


\begin{proof}
	The upper bound on the stabilisation term follows trivially from the assumption \eqref{eq:sT.coercvity.and.boundedness} and the identity \eqref{eq:discrete.norm.identity}. Consider,
	\begin{align}\label{eq:discrete.terms.bound.proof.1}
		\hT^{-1}\norm[T]{\bmvT - \bmrTPsi{k+1}\ulbmvT} \lea \hT^{-1}\norm[T]{\bmvT - \bmPiTD{k}\bmrTPsi{k+1}\ulbmvT} + \hT^{-1}\norm[T]{\bmrTPsi{k+1}\ulbmvT - \bmPiTD{k}\bmrTPsi{k+1}\ulbmvT} \nl \lea \hT^{-1}\norm[T]{\bmvT - \bmPiTD{k}\bmrTPsi{k+1}\ulbmvT} + \hT^{-1}\norm[T]{\bmrTPsi{k+1}\ulbmvT - \bmPiT{0}\bmrTPsi{k+1}\ulbmvT} \nl \les \hT^{-1}\norm[T]{\bmvT - \bmPiTD{k}\bmrTPsi{k+1}\ulbmvT} + \norm[T]{\nabla\bmrTPsi{k+1}\ulbmvT},
	\end{align}
	where the term $\bmPiTD{k}\bmrTPsi{k+1}\ulbmvT$ has been added and subtracted and a triangle inequality applied, and in the second line the term $\bmPiTD{k}\bmrTPsi{k+1}\ulbmvT$ is replaced with $\bmPiT{0}\bmrTPsi{k+1}\ulbmvT$ due to the optimal approximation properties of orthogonal projectors. The final inequality is a result of a Poincar\'{e}--Wirtinger inequality. Similarly, add and subtract $\bmPiFn{k}\bmrTPsi{k+1}\ulbmvT$ to the boundary term of \eqref{eq:discrete.terms.bound} and apply a triangle inequality to yield,
	\begin{align}\label{eq:discrete.terms.bound.proof.2}
		\hT^{-\frac12}&\Brac{\sum_{F\in\Fh[T]}\norm[F]{\bmvF - \bmrTPsi{k+1}\ulbmvT}^2}^\frac12 \nl \lea \hT^{-\frac12}\Brac{\sum_{F\in\Fh[T]}\norm[F]{\bmvF - \bmPiFn{k}\bmrTPsi{k+1}\ulbmvT}^2}^\frac12 + \hT^{-\frac12}\Brac{\sum_{F\in\Fh[T]}\norm[F]{\bmrTPsi{k+1}\ulbmvT - \bmPiFn{k}\bmrTPsi{k+1}\ulbmvT}^2}^\frac12 \nl \lea \hT^{-\frac12}\Brac{\sum_{F\in\Fh[T]}\norm[F]{\bmvF - \bmPiFn{k}\bmrTPsi{k+1}\ulbmvT}^2}^\frac12 + \hT^{-\frac12}\Brac{\sum_{F\in\Fh[T]}\norm[F]{\bmrTPsi{k+1}\ulbmvT - \bmPiT{0}\bmrTPsi{k+1}\ulbmvT}^2}^\frac12 \nl \les \hT^{-\frac12}\Brac{\sum_{F\in\Fh[T]}\norm[F]{\bmvF - \bmPiFn{k}\bmrTPsi{k+1}\ulbmvT}^2}^\frac12 + \norm[T]{\nabla\bmrTPsi{k+1}\ulbmvT},
	\end{align}
	where the term $\bmPiFn{k}\bmrTPsi{k+1}\ulbmvT$ is replaced with $\bmPiT{0}\bmrTPsi{k+1}\ulbmvT$ in the second inequality due to the optimal approximation properties of orthogonal projectors and the fact that $\bmPOLY{0}(T)|_F\subset \bmPOLYn{k}(F)$, and the continuous trace inequality \eqref{small.faces:eq:continuous.trace} and a Poincar\'{e}--Wirtinger inequality are applied to infer the final inequality. The result follows as each of the terms in \eqref{eq:discrete.terms.bound.proof.1} and \eqref{eq:discrete.terms.bound.proof.2} are bounded above by $\HHOnormTPsi{\ulbmvT}$.
\end{proof}

\begin{lemma}[Boundedness of the local interpolator]\label{lem:bmITPsik.bound}
	It holds, for all $\bmw\in\HONE(T)^d$, that
	\begin{equation}\label{eq:bmITPsik.bound}
		\HHOnormTPsi{\bmITPsik\bmw} \lesssim \seminorm[\HONE(T)^d]{\bmw}.
	\end{equation}
\end{lemma}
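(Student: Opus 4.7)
\bigskip

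The plan is to write $\ulbmvT = \bmITPsik\bmw$, substitute into the definition \eqref{eq:discrete.norm} of $\HHOnormTPsi{\cdot}$, and exploit the commutation property of Lemma \ref{lem:commutation.property}, which gives $\bmrTPsi{k+1}\bmITPsik\bmw = \bmSigmaTPsi{k+1}\bmw$. With this substitution the three pieces of $\HHOnormTPsi{\bmITPsik\bmw}^2$ become (i) $\norm[T]{\nabla \bmSigmaTPsi{k+1}\bmw}^2$, (ii) $\hT^{-2}\norm[T]{\bmPiTD{k}(\bmw-\bmSigmaTPsi{k+1}\bmw)}^2$, and (iii) $\hT^{-1}\sum_{F\in\Fh[T]}\norm[F]{\bmPiFn{k}(\bmw-\bmSigmaTPsi{k+1}\bmw)}^2$, since $\bmPiTD{k}\bmPiTD{k}=\bmPiTD{k}$ and similarly on faces.

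The first term is immediate: testing \eqref{eq:elliptic.projector} with $\bmw=\bmSigmaTPsi{k+1}\bmv$ yields the Pythagorean identity $\norm[T]{\nabla \bmv}^2 = \norm[T]{\nabla \bmSigmaTPsi{k+1}\bmv}^2 + \norm[T]{\nabla(\bmv - \bmSigmaTPsi{k+1}\bmv)}^2$, so in particular both $\norm[T]{\nabla \bmSigmaTPsi{k+1}\bmw} \le \seminorm[\HONE(T)^d]{\bmw}$ and $\norm[T]{\nabla(\bmw - \bmSigmaTPsi{k+1}\bmw)} \le \seminorm[\HONE(T)^d]{\bmw}$. For the second term, I would use the $\LTWO$-boundedness of $\bmPiTD{k}$ together with the closure relation \eqref{eq:elliptic.projector.closure}, which guarantees that $\bmw - \bmSigmaTPsi{k+1}\bmw$ has zero mean on $T$, and then apply a Poincar\'e--Wirtinger inequality to conclude $\norm[T]{\bmw - \bmSigmaTPsi{k+1}\bmw} \lesssim \hT \seminorm[\HONE(T)^d]{\bmw}$. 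For the third term, the $\LTWO$-boundedness of $\bmPiFn{k}$ reduces matters to controlling $\norm[F]{\bmw - \bmSigmaTPsi{k+1}\bmw}$, for which I would invoke the continuous trace inequality \eqref{small.faces:eq:continuous.trace} applied to $\bmw - \bmSigmaTPsi{k+1}\bmw$; combined with the Poincar\'e--Wirtinger bound this yields $\norm[F]{\bmw - \bmSigmaTPsi{k+1}\bmw}^2 \lesssim \hT\seminorm[\HONE(T)^d]{\bmw}^2$. Summing over $F \in \Fh[T]$ (whose cardinality is uniformly bounded by mesh regularity, Assumption \ref{assum:star.shaped}) absorbs the $\hT^{-1}$ factor.

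The main potential obstacle is not any single estimate but rather the requirement never to invoke a discrete inverse inequality on the enriched space $\bmPOLYPsi{k+1}(T)$, whose validity is not known in the presence of the non-polynomial singular functions. Every step above is careful in this respect: the Pythagorean identity only uses the defining orthogonality of $\bmSigmaTPsi{k+1}$, the $\LTWO$-boundedness of the projectors is a defining property, and the Poincar\'e--Wirtinger and continuous trace inequalities act on the continuous function $\bmw - \bmSigmaTPsi{k+1}\bmw \in \HONE(T)^d$ rather than on polynomials. Putting the three bounds together gives $\HHOnormTPsi{\bmITPsik\bmw}^2 \lesssim \seminorm[\HONE(T)^d]{\bmw}^2$, as required.
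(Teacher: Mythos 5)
Your proposal is correct and follows essentially the same route as the paper's proof: substitute via the commutation property \eqref{eq:commutation.property}, use the $\LTWO$-boundedness of $\bmPiTD{k}$ and $\bmPiFn{k}$ to strip the projectors, then control $\bmw - \bmSigmaTPsi{k+1}\bmw$ with the continuous trace inequality \eqref{small.faces:eq:continuous.trace}, a Poincar\'e--Wirtinger inequality (using the closure relation \eqref{eq:elliptic.projector.closure}), and the $\HONE$-minimisation property of $\bmSigmaTPsi{k+1}$. The only nitpick is that passing from $\bmPiTD{k}\bmw - \bmPiTD{k}\bmSigmaTPsi{k+1}\bmw$ to $\bmPiTD{k}(\bmw-\bmSigmaTPsi{k+1}\bmw)$ uses linearity of the projector rather than idempotence, since $\bmSigmaTPsi{k+1}\bmw$ need not lie in $\bmPOLYD{k}(T)$.
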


\begin{proof}
	Applying the definitions \eqref{eq:bmITPsi.def} of $\bmITPsik$ and \eqref{eq:discrete.norm} of $\HHOnormTPsi{\cdot}$ together with the commutation property \eqref{eq:commutation.property} yields,
	\begin{align*}
		\HHOnormTPsi{\bmITPsik\bmw}^2 \eq \norm[T]{\nabla \bmSigmaTPsi{k+1}\bmw}^2 + \hT^{-2}\norm[T]{\bmPiTD{k}(\bmw - \bmSigmaTPsi{k+1}\bmw)}^2 + \hT^{-1}\sum_{F\in\Fh[T]}\norm[F]{\bmPiFn{k}(\bmw - \bmSigmaTPsi{k+1}\bmw)}^2 \nl
		\lea \norm[T]{\nabla \bmw}^2 + \hT^{-2}\norm[T]{\bmw - \bmSigmaTPsi{k+1}\bmw}^2 + \hT^{-1}\sum_{F\in\Fh[T]}\norm[F]{\bmw - \bmSigmaTPsi{k+1}\bmw}^2,
	\end{align*}
	where the inequality follows from the boundedness of $\bmSigmaTPsi{k+1}$ in the $\HONE(T)$-seminorm, the $\LTWO(T)^d$-boundedness of $\bmPiTD{k}$ and the $\LTWO(F)^d$-boundedness of $\bmPiFn{k}$. Applying the continuous trace inequality \eqref{small.faces:eq:continuous.trace} and a Poincar\'{e}--Wirtinger inequality yields,
	\begin{equation}
		\HHOnormTPsi{\bmITPsik\bmw}^2 \lesssim \norm[T]{\nabla \bmw}^2 + \norm[T]{\nabla(\bmw - \bmSigmaTPsi{k+1}\bmw)}^2 \le 2\norm[T]{\nabla \bmw}^2,
	\end{equation}
	where the final inequality is a result of the minimisation properties of $\bmSigmaTPsi{k+1}\bmw$ in the $\HONE(T)$-seminorm.
\end{proof}

\begin{lemma}[Consistency of $\sT$]\label{lem:sT.consistency}
	Let $\sT:\bmUTPsik\times\bmUTPsik\to\R$ be a stabilisation bilinear form satisfying Assumption \ref{assum:stability}. Then, for all $\bmw = \bmw_r + \bmpsi\in\HONE(T)^d$ with $\bmw_r \in \HS{k+2}(T)^d$ and $\bmpsi \in \bmPsi(T)$,
	\begin{equation}\label{eq:sT.consistency}
		\sT(\bmITPsik \bmw, \bmITPsik \bmw)^\frac12 \lesssim \hT^{k+1} \seminorm[\HS{k+2}(T)^d]{\bmw_r}.
	\end{equation}
\end{lemma}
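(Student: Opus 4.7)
The plan is to combine the norm-equivalence assumption on $\sT$ with the commutation property of Lemma \ref{lem:commutation.property}, then reduce the bound to an elliptic-projection error estimate on the regular part $\bmw_r$ alone.

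First I would start from Assumption \ref{assum:stability}, which gives
\[
\sT(\bmITPsik\bmw, \bmITPsik\bmw)^\frac12 \lesssim \HHOnormTPsi{\bmITPsik\bmw - \bmITPsik\bmrTPsi{k+1}\bmITPsik\bmw}.
\]
Applying the commutation property \eqref{eq:commutation.property}, $\bmrTPsi{k+1}\bmITPsik\bmw = \bmSigmaTPsi{k+1}\bmw$, the right-hand side becomes $\HHOnormTPsi{\bmITPsik(\bmw - \bmSigmaTPsi{k+1}\bmw)}$. Then Lemma \ref{lem:bmITPsik.bound} (boundedness of $\bmITPsik$ with respect to the $\HONE$ seminorm) gives
\[
\sT(\bmITPsik\bmw, \bmITPsik\bmw)^\frac12 \lesssim \seminorm[\HONE(T)^d]{\bmw - \bmSigmaTPsi{k+1}\bmw}.
\]

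Next I would use the singular/regular splitting $\bmw = \bmw_r + \bmpsi$. Since $\bmpsi\in\bmPsi(T)\subset\bmPOLYPsi{k+1}(T)$, the definition \eqref{eq:elliptic.projector} of the extended elliptic projector ensures $\bmSigmaTPsi{k+1}\bmpsi = \bmpsi$ (the projector is invariant on its image), so
\[
\bmw - \bmSigmaTPsi{k+1}\bmw = \bmw_r - \bmSigmaTPsi{k+1}\bmw_r.
\]
This is the key step that removes the non-polynomial enrichment from the estimate.

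Then, by the defining orthogonality \eqref{eq:elliptic.projector}, $\bmSigmaTPsi{k+1}\bmw_r$ is a best approximation of $\bmw_r$ in the $\HONE(T)^d$-seminorm among elements of $\bmPOLYPsi{k+1}(T)$. Since $\bmPOLY{k+1}(T)\subset\bmPOLYPsi{k+1}(T)$ by \eqref{def:extended.space}, for any $\bmq\in\bmPOLY{k+1}(T)^d$,
\[
\seminorm[\HONE(T)^d]{\bmw_r - \bmSigmaTPsi{k+1}\bmw_r} \le \seminorm[\HONE(T)^d]{\bmw_r - \bmq}.
\]
Choosing $\bmq$ to be a standard polynomial approximant of degree $k+1$ (for example, the $\LTWO$-orthogonal projector, combined with the Bramble--Hilbert lemma on the star-shaped element $T$, valid thanks to Assumption \ref{assum:star.shaped}), the right-hand side is bounded by $\hT^{k+1}\seminorm[\HS{k+2}(T)^d]{\bmw_r}$, finishing the proof.

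The only subtlety I anticipate is making sure the Bramble--Hilbert-type estimate is applied correctly on the curved element $T$; this is fine because $T$ is star-shaped with parameter $\varrho$ by Assumption \ref{assum:star.shaped}, so standard polynomial approximation on star-shaped sets applies to $\bmw_r\in\HS{k+2}(T)^d$. No discrete inverse inequality or regularity of the enrichment space $\bmPsi(T)$ beyond \ref{item:U1} is needed, because the enrichment is absorbed through the invariance $\bmSigmaTPsi{k+1}\bmpsi = \bmpsi$.
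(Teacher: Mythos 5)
Your proposal is correct and follows essentially the same route as the paper: norm equivalence of $\sT$ plus the commutation property \eqref{eq:commutation.property}, the boundedness of $\bmITPsik$ from Lemma \ref{lem:bmITPsik.bound}, removal of the singular part via the invariance of $\bmSigmaTPsi{k+1}$ on $\bmPsi(T)$, and finally the $\HONE$-minimisation property of the extended elliptic projector to reduce to standard polynomial approximation of $\bmw_r$ (the paper compares against $\bmSigmaT{k+1}\bmw_r$ rather than a generic Bramble--Hilbert approximant, which is the same step). The only cosmetic difference is that you apply the interpolator bound before splitting off $\bmpsi$, whereas the paper splits first; the argument is unaffected.
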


\begin{proof}
	Applying the condition \eqref{eq:sT.coercvity.and.boundedness} on $\sT$ with $\ulbmvT = \bmITPsik \bmw$ combined with the commutation property \eqref{eq:commutation.property} yields
	\begin{equation}\label{eq:sT.consistency.proof.1}
		\sT(\bmITPsik \bmw, \bmITPsik \bmw)^\frac12 \lesssim \HHOnormTPsi{\bmITPsik(\bmw - \bmSigmaTPsi{k+1}\bmw)} = \HHOnormTPsi{\bmITPsik(\bmw_r - \bmSigmaTPsi{k+1}\bmw_r)},
	\end{equation}
	which follows from the invariance of $\bmSigmaTPsi{k+1}$ on $\bmPsi(T)$ to remove the singular part of $\bmw$. Applying \eqref{eq:bmITPsik.bound} yields
	\begin{equation}\label{eq:sT.consistency.proof.2}
		\HHOnormTPsi{\bmITPsik(\bmw_r - \bmSigmaTPsi{k+1}\bmw_r)} \lesssim \seminorm[\HONE(T)^d]{\bmw_r - \bmSigmaTPsi{k+1}\bmw_r} \le \seminorm[\HONE(T)^d]{\bmw_r - \bmSigmaT{k+1}\bmw_r},
	\end{equation}
	where the second inequality holds due to the minimisation properties of $\bmSigmaTPsi{k+1}$, and $\bmSigmaT{k+1}\bmw_r \in \bmPOLYPsi{k+1}(T)$. The result follows by applying the approximation properties \parencite[Theorem 1.48]{di-pietro.droniou:2020:hybrid} of $\bmSigmaT{k+1}$.
\end{proof}

\begin{theorem}[Consistency of $\ah$]\label{thm:ah.consistency}
	The discrete bilinear form $\ah$ satisfies the following consistency: for all $\bmw = \bmw_r + \bmpsi\in\HONE(\Omega)^d$ such that $\bmDelta \bmw\in\LTWO(\Omega)^d$,  $\bmw_r \in \HS{k+2}(\Th)^d$ and $\bmpsi\in\bmPsi(\Th)$, and for all $\ulbmvh \in \bmUhPsizrk$,
	\begin{equation}\label{eq:ah.consistency}
		\left|-\int_{\Omega} \bmDelta \bmw \cdot \bmvh - \ah(\bmIhPsik \bmw, \ulbmvh)\right| \lesssim \HHOnormhPsi{\ulbmvh} h^{k+1} \seminorm[H^{k+2}(\Th)^d]{\bmw_r}. 
	\end{equation}		
\end{theorem}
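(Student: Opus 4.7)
The plan is to first rewrite $\ah(\bmIhPsik\bmw,\ulbmvh)$ in an element-plus-face form. Using the commutation property \eqref{eq:commutation.property} together with the defining identity \eqref{eq:elliptic.projector} of $\bmSigmaTPsi{k+1}$ applied with the test function $\bmrTPsi{k+1}\ulbmvT\in\bmPOLYPsi{k+1}(T)$, one replaces $\nabla\bmrTPsi{k+1}\bmITPsik\bmw=\nabla\bmSigmaTPsi{k+1}\bmw$ by $\nabla\bmw$ under the inner product with $\nabla\bmrTPsi{k+1}\ulbmvT$. An elementwise integration by parts (legitimate thanks to $\bmrTPsi{k+1}\ulbmvT\in\HONE(T)^d$, $\bmDelta\bmw\in\LTWO(\Omega)^d$, and $(\nabla\bmw)\norTF\in\LTWO(F)^d$, the latter from the trace regularity of $\bmw_r$ together with assumption \ref{item:U3}) then produces an element-face decomposition. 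Subtracting this from $-\int_\Omega\bmDelta\bmw\cdot\bmvh$ and inserting the vanishing quantity $\sum_{T\in\Th}\sum_{F\in\Fh[T]}\int_F(\nabla\bmw)\norTF\cdot\bmvF=0$ (valid because $\nabla\bmw\in H(\DIV;\Omega)^d$ has single-valued normal trace across interior faces, and $\bmvF=\bm{0}$ on boundary faces owing to $\ulbmvh\in\bmUhPsizrk$) recasts the consistency residual as the sum over $T\in\Th$ of
\[
\calE_T \defeq -\int_T\bmDelta\bmw\cdot(\bmvT-\bmrTPsi{k+1}\ulbmvT) + \sum_{F\in\Fh[T]}\int_F(\nabla\bmw)\norTF\cdot(\bmvF-\bmrTPsi{k+1}\ulbmvT) - \sT(\bmITPsik\bmw,\ulbmvT).
\]

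The central step is a polynomial orthogonality for the non-stabilisation part of $\calE_T$. Applying the definition \eqref{eq:bmrTPsi.def} of $\bmrTPsi{k+1}$ with an arbitrary test $\bmz\in\bmPOLYPsi{k+1}(T)$, and independently integrating $\int_T\nabla\bmrTPsi{k+1}\ulbmvT:\nabla\bmz$ by parts on the left (again using $\bmrTPsi{k+1}\ulbmvT\in\HONE(T)^d$), the two expressions combine to yield
\[
-\int_T\bmDelta\bmz\cdot(\bmvT-\bmrTPsi{k+1}\ulbmvT) + \sum_{F\in\Fh[T]}\int_F(\nabla\bmz)\norTF\cdot(\bmvF-\bmrTPsi{k+1}\ulbmvT) = 0 \qquad\forall\bmz\in\bmPOLYPsi{k+1}(T).
\]
Hence the first two terms of $\calE_T$ are invariant under $\bmw\mapsto\bmw-\bmz$ for any such $\bmz$. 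I select $\bmz=\bmSigmaT{k+1}\bmw_r+\bmpsi$, which belongs to $\bmPOLY{k+1}(T)+\bmPsi(T)=\bmPOLYPsi{k+1}(T)$, so that $\bmw-\bmz=\bmw_r-\bmSigmaT{k+1}\bmw_r$ reduces to the standard polynomial projection error on the regular component.

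A Cauchy--Schwarz step then combines \eqref{eq:discrete.terms.bound} (applied to $\|\bmvT-\bmrTPsi{k+1}\ulbmvT\|_T$ and $\|\bmvF-\bmrTPsi{k+1}\ulbmvT\|_F$) with the classical $\HS{2}(T)$-seminorm and face-gradient approximation estimates of $\bmSigmaT{k+1}$ on $\bmw_r\in\HS{k+2}(T)^d$ (the face bounds following from \eqref{small.faces:eq:continuous.trace} applied to $\bmw_r-\bmSigmaT{k+1}\bmw_r$ and its gradient, together with \cite[Theorem 1.48]{di-pietro.droniou:2020:hybrid}) to produce a bound of order $h_T^{k+1}|\bmw_r|_{\HS{k+2}(T)^d}\HHOnormTPsi{\ulbmvT}$. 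The stabilisation contribution $\sT(\bmITPsik\bmw,\ulbmvT)$ is bounded by Cauchy--Schwarz combined with Lemma \ref{lem:sT.consistency} and \eqref{eq:discrete.terms.bound}, giving the same order. A final summation over $T$ and a discrete Cauchy--Schwarz inequality deliver \eqref{eq:ah.consistency}.

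The principal obstacle is the absence of discrete inverse inequalities on $\bmPOLYPsi{k+1}(T)$ and of any $\LTWO$-type approximation estimates for the extended projector $\bmSigmaTPsi{k+1}$ beyond its defining $H^1$-seminorm optimality. The splitting $\bmz=\bmSigmaT{k+1}\bmw_r+\bmpsi$ sidesteps both: the singular part $\bmpsi$ is \emph{absorbed exactly} by virtue of lying in $\bmPOLYPsi{k+1}(T)$, while $\bmSigmaT{k+1}$ is applied only to the regular part $\bmw_r$, where classical Sobolev approximation results apply in full strength.
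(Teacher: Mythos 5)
Your proposal is correct and follows essentially the same route as the paper's proof: the same reduction of the residual to the elementwise quantity $\calE_T$ via the single-valuedness of $(\nabla\bmw)\norTF$ and the homogeneous face condition, the same orthogonality identity derived from \eqref{eq:bmrTPsi.def} by integration by parts, the same choice $\bmz=\bmSigmaT{k+1}\bmw_r+\bmpsi$ to absorb the singular part exactly, and the same final bounds via \eqref{eq:discrete.terms.bound}, Lemma \ref{lem:sT.consistency}, the trace inequality and the approximation properties of $\bmSigmaT{k+1}$. The only difference is cosmetic: you invoke the elliptic-projector orthogonality at the start to pass from $\nabla\bmSigmaTPsi{k+1}\bmw$ to $\nabla\bmw$, whereas the paper does so at the end.
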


\begin{proof} Applying the definition \eqref{eq:ah.and.bh.def} of $\ah$ and the commutation property \eqref{eq:commutation.property}, it holds that
	\begin{equation}\label{eq:ah.consistency.consistency.proof.1}
		\ah(\bmIhPsik \bmw,\ulbmvh) = \sum_{T\in\Th}\SqBrac{\int_T \nabla \bmSigmaTPsi{k+1} \bmw : \nabla \bmrTPsi{k+1}\ulbmvT + \sT(\bmITPsik \bmw, \ulbmvT)}.
	\end{equation}
	Consider now, on each element $T\in\Th$, 
	\begin{multline}\label{eq:ah.consistency.consistency.proof.2}
		-\int_{T} \bmDelta \bmw \cdot \bmvT = -\int_{T} \bmDelta \bmw \cdot (\bmvT - \bmrTPsi{k+1}\ulbmvT) + \int_{T} \nabla \bmw : \nabla \bmrTPsi{k+1}\ulbmvT \\ - \sum_{F\in\Fh[T]}\int_F (\nabla \bmw) \norTF \cdot \bmrTPsi{k+1}\ulbmvT,
	\end{multline}
	which follows by adding and subtracting the term $\int_{T} \bmDelta \bmw \cdot \bmrTPsi{k+1}\ulbmvT$ and integrating by parts.
	By the homogeneous condition on the discrete space $\bmUhPsizrk$, and the regularity $\bmw\in\HONE(\Omega)^d$ and $\bmDelta \bmw\in\LTWO(\Omega)^d$, it holds that
	\begin{equation}\label{eq:ah.consistency.consistency.proof.3}
		0 = \sum_{F\in\Fh} \sum_{T\in\Th[F]} \int_F (\nabla \bmw) \norTF \cdot \bmvF = \sum_{T\in\Th} \sum_{F\in\Fh[T]} \int_F (\nabla \bmw) \norTF \cdot \bmvF.
	\end{equation}
	Applying both \eqref{eq:ah.consistency.consistency.proof.2} and \eqref{eq:ah.consistency.consistency.proof.3}, it follows that
	\begin{multline}\label{eq:ah.consistency.consistency.proof.4}
		-\int_{\Omega} \bmDelta \bmw \cdot \bmvh = \sum_{T\in\Th}\Big[\int_{T} \nabla \bmw : \nabla \bmrTPsi{k+1}\ulbmvT -\int_{T} \bmDelta \bmw \cdot (\bmvT - \bmrTPsi{k+1}\ulbmvT) \\ 
		+ \sum_{F\in\Fh[T]}\int_F (\nabla \bmw) \norTF \cdot (\bmvF - \bmrTPsi{k+1}\ulbmvT) \Big].
	\end{multline}
	However, consider the defining equation \eqref{eq:bmrTPsi.def} of $\bmrTPsi{k+1}$ and integrate by parts. It is apparent that
	\begin{equation}\label{eq:ah.consistency.consistency.proof.5}
		-\int_{T} \bmDelta\bmz \cdot (\bmvT - \bmrTPsi{k+1}\ulbmvT) + \sum_{F\in\Fh[T]}\int_F (\nabla \bmz) \norTF \cdot (\bmvF - \bmrTPsi{k+1}\ulbmvT) = 0,
	\end{equation}
	for all $\bmz\in\bmPOLYPsi{k+1}(T)$. Therefore, recalling that $\bmw = \bmw_r + \bmpsi = \bmw_r - \bmSigmaT{k+1}\bmw_r + \bmpsi + \bmSigmaT{k+1}\bmw_r$ with $\bmpsi|_T + \bmSigmaT{k+1}\bmw_r\in \bmPOLYPsi{k+1}(T)$, apply \eqref{eq:ah.consistency.consistency.proof.5} with $\bmz = \bmpsi + \bmSigmaT{k+1}\bmw_r$ to yield,
	\begin{multline}\label{eq:ah.consistency.consistency.proof.7}
		-\int_{T} \bmDelta \bmw \cdot (\bmvT - \bmrTPsi{k+1}\ulbmvT)
		+ \sum_{F\in\Fh[T]}\int_F (\nabla \bmw) \norTF \cdot (\bmvF - \bmrTPsi{k+1}\ulbmvT)
		\\  = -\int_{T} \bmDelta (\bmw_r - \bmSigmaT{k+1} \bmw_r) \cdot (\bmvT - \bmrTPsi{k+1}\ulbmvT)
		+ \sum_{F\in\Fh[T]}\int_F \nabla (\bmw_r - \bmSigmaT{k+1} \bmw_r) \norTF \cdot (\bmvF - \bmrTPsi{k+1}\ulbmvT).
	\end{multline}
	Therefore, combining  \eqref{eq:ah.consistency.consistency.proof.7} with \eqref{eq:ah.consistency.consistency.proof.4} results in
	\begin{multline}\label{eq:ah.consistency.consistency.proof.8}
		-\int_{\Omega} \bmDelta \bmw \cdot \bmvh = \sum_{T\in\Th}\Big[\int_{T} \nabla \bmw : \nabla \bmrTPsi{k+1}\ulbmvT -\int_{T} \bmDelta (\bmw_r - \bmSigmaT{k+1} \bmw_r) \cdot (\bmvT - \bmrTPsi{k+1}\ulbmvT) \\ 
		+ \sum_{F\in\Fh[T]}\int_F \nabla (\bmw_r - \bmSigmaT{k+1} \bmw_r) \norTF \cdot (\bmvF - \bmrTPsi{k+1}\ulbmvT) \Big].
	\end{multline}
	Equations \eqref{eq:ah.consistency.consistency.proof.1} and \eqref{eq:ah.consistency.consistency.proof.8} are now combined to yield,
	\begin{multline}\label{eq:ah.consistency.consistency.proof.9}
		-\int_{\Omega} \bmDelta \bmw \cdot \bmvh - \ah(\bmIhPsik \bmw,\ulbmvh) 
		\\ = \sum_{T\in\Th}\Big[\int_{T} \nabla (\bmw - \bmSigmaTPsi{k+1} \bmw) : \nabla \bmrTPsi{k+1}\ulbmvT -\int_{T} \bmDelta (\bmw_r - \bmSigmaT{k+1} \bmw_r) \cdot (\bmvT - \bmrTPsi{k+1}\ulbmvT) \\
		+ \sum_{F\in\Fh[T]}\int_F \nabla (\bmw_r - \bmSigmaT{k+1} \bmw_r) \norTF \cdot (\bmvF - \bmrTPsi{k+1}\ulbmvT) - \sT(\bmITPsik \bmw,\ulbmvT)  \Big].
	\end{multline}
	It follows from the definition \eqref{eq:elliptic.projector} of the extended elliptic projector $\bmSigmaTPsi{k+1}$ that
	\[
	\int_{T} \nabla (\bmw - \bmSigmaTPsi{k+1} \bmw) : \nabla \bmrTPsi{k+1}\ulbmvT = 0.
	\]
	Substituting into \eqref{eq:ah.consistency.consistency.proof.9} and applying a triangle inequality on the sum and Cauchy--Schwarz inequalities on the integrals and $\sT$ yields,
	\begin{multline}\label{eq:ah.consistency.consistency.proof.10}
		\Big|-\int_{\Omega} \bmDelta \bmw \cdot \bmvh - \ah(\bmIhPsik \bmw,\ulbmvh) \Big|
		\\ \le \sum_{T\in\Th}\Big[\hT\norm[T]{\bmDelta (\bmw_r - \bmSigmaT{k+1} \bmw_r)}\hT^{-1}\norm[T]{\bmvT - \bmrTPsi{k+1}\ulbmvT} + \sT(\bmITPsik \bmw, \bmITPsik \bmw)^\frac12 \sT(\ulbmvT, \ulbmvT)^\frac12 \\
		+ \sum_{F\in\Fh[T]}\hT^\frac12\norm[F]{\nabla (\bmw_r - \bmSigmaT{k+1} \bmw_r)\norTF}\hT^{-\frac12}\norm[F]{\bmvF - \bmrTPsi{k+1}\ulbmvT}   \Big].
	\end{multline}
	By applying a Cauchy--Schwarz on the sum over the faces and equation \eqref{eq:discrete.terms.bound}, it is inferred that
	\begin{multline}\label{eq:ah.consistency.consistency.proof.11}
		\Big|-\int_{\Omega} \bmDelta \bmw \cdot \bmvh - \ah(\bmIhPsik \bmw,\ulbmvh) \Big| \lesssim \sum_{T\in\Th}\HHOnormTPsi{\ulbmvT}\Big[\hT\norm[T]{\bmDelta (\bmw_r - \bmSigmaT{k+1} \bmw_r)}  \\ + \sT(\bmITPsik \bmw, \bmITPsik \bmw)^\frac12
		+ \Brac{\sum_{F\in\Fh[T]}\hT\norm[F]{\nabla (\bmw_r - \bmSigmaT{k+1} \bmw_r)\norTF}^2}^\frac12\Big].
	\end{multline}
	Finally, the result follows by applying the consistency \eqref{eq:sT.consistency} of $\sT$, the continuous trace inequality \eqref{small.faces:eq:continuous.trace} on the boundary term, the approximation properties \parencite[Theorem 1.48]{di-pietro.droniou:2020:hybrid} of the elliptic projector $\bmSigmaT{k+1}$ together with the assumed $\HS{k+2}$-regularity of $\bmw_r$, and a Cauchy--Schwarz inequality on the sum.
\end{proof}

\begin{theorem}[Consistency of $\bh$]\label{thm:bh.consistency}
	The discrete bilinear form $\bh$ satisfies the following consistency properties: 	
	\begin{enumerate}
		\item For all $\bmv\in\HONE(\Omega)$ and for all $q_h\in\POLYPhizr{k}(\Th)$,
		\begin{equation}\label{eq:bh.consistency.1}
			\rmb(\bmv, q_h) = \bh(\bmIhPsik \bmv, q_h). 
		\end{equation}	
		\item For all $q = q_r + \phi\in\HONE(\Omega)$ such that $q_r \in \HS{k+1}(\Th)$ and $\phi\in\Phi(\Th)$, and for all $\ulbmvh \in \bmUhPsizrk$,
		\begin{equation}\label{eq:bh.consistency.2}
			\left|\int_{\Omega} \nabla q \cdot \bmvh - \bh(\ulbmvh, \SigmahPhi{k} q)\right| \lesssim \HHOnormhPsi{\ulbmvh} h^{k+1} \seminorm[H^{k+1}(\Th)]{q_r}, 
		\end{equation}	
		where $\SigmahPhi{k}|_T=\SigmaTPhi{k}$ for all $T\in\Th$, and $\SigmaTPhi{k}$ denotes the elliptic projector on $\POLYPhi{k}(T)$.
	\end{enumerate}	
\end{theorem}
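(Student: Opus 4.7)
Part 1 follows almost immediately from the commutation property of the divergence reconstruction. For any $T\in\Th$ and $q_T\in\POLYPhi{k}(T)$, Lemma \ref{lem:DT.commutation.property} gives $\DTPhi{k}\bmITPsik\bmv = \PiTPhi{k}\DIV\bmv$, and the $\LTWO$-orthogonality of $\PiTPhi{k}$ against test functions in $\POLYPhi{k}(T)$ removes the projector, yielding $\bT(\bmITPsik\bmv, q_T) = -\int_T\DIV\bmv\,q_T$. Summing over $T\in\Th$ reproduces $\rmb(\bmv, q_h)$.

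For Part 2 I would follow the same strategy used in the proof of Theorem \ref{thm:ah.consistency}. First, apply the defining relation \eqref{eq:DTPhi.def} of $\DTPhi{k}$ with test function $\SigmaTPhi{k}q\in\POLYPhi{k}(T)$ to rewrite
\begin{equation*}
\bh(\ulbmvh, \SigmahPhi{k}q) = \sum_{T\in\Th}\int_T\bmvT\cdot\nabla(\SigmaTPhi{k}q) - \sum_{T\in\Th}\sum_{F\in\Fh[T]}\int_F\bmvF\cdot\SigmaTPhi{k}q\,\norTF.
\end{equation*}
Subtracting this from $\int_\Omega\nabla q\cdot\bmvh=\sum_T\int_T\nabla q\cdot\bmvT$, and using that $q\in\HONE(\Omega)$ together with $\bmvF=\bm{0}$ on $\partial\Omega$ implies $\sum_T\sum_F\int_F\bmvF\cdot q\,\norTF=0$ (internal contributions cancel in pairs and boundary ones vanish), yields the identity
\begin{equation*}
\int_\Omega\nabla q\cdot\bmvh - \bh(\ulbmvh, \SigmahPhi{k}q) = \sum_T\int_T\nabla(q - \SigmaTPhi{k}q)\cdot\bmvT - \sum_T\sum_F\int_F\bmvF\cdot(q - \SigmaTPhi{k}q)\,\norTF.
\end{equation*}

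Next, I would add and subtract $\bmrTPsi{k+1}\ulbmvT$ inside the element integrals and integrate by parts the part paired with $\bmrTPsi{k+1}\ulbmvT$ — legitimate since $\bmrTPsi{k+1}\ulbmvT\in\HONE(T)^d$ and $\SigmaTPhi{k}q\in\HONE(T)$ by Assumption \ref{assum:regularity}\ref{item:P1}. By linearity and the invariance of the elliptic projector on $\Phi(T)$, the scalar error $q-\SigmaTPhi{k}q$ collapses to $q_r-\SigmaTPhi{k}q_r$, so the singular component drops out entirely. The right-hand side then splits into three structurally distinct terms: a volumetric gradient term paired with $\bmvT-\bmrTPsi{k+1}\ulbmvT$, a volumetric term featuring $\DIV\bmrTPsi{k+1}\ulbmvT$ paired with $q_r-\SigmaTPhi{k}q_r$, and a face term paired with $\bmrTPsi{k+1}\ulbmvT-\bmvF$. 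Each is controlled by Cauchy--Schwarz together with \eqref{eq:discrete.terms.bound} to bound the HHO residuals by $\HHOnormTPsi{\ulbmvT}$, supplemented by the continuous trace inequality \eqref{small.faces:eq:continuous.trace} on the face term.

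The main obstacle I anticipate is establishing optimal approximation estimates for the \emph{extended} elliptic projector, namely
\begin{equation*}
\|q_r - \SigmaTPhi{k}q_r\|_T + h_T\|\nabla(q_r - \SigmaTPhi{k}q_r)\|_T + h_T^{\frac{1}{2}}\|q_r - \SigmaTPhi{k}q_r\|_F \lesssim h_T^{k+1}\seminorm[\HS{k+1}(T)]{q_r},
\end{equation*}
since $\SigmaTPhi{k}$ is not a polynomial projector and a direct Bramble--Hilbert argument is unavailable. The route I would take is to compare $\SigmaTPhi{k}q_r$ with the pure polynomial elliptic projection $\SigmaT{k}q_r$: the $\HONE$-seminorm bound follows from the minimisation property of $\SigmaTPhi{k}$ on the enriched space $\POLYPhi{k}(T)\supset\POLY{k}(T)$, while the $\LTWO$- and face-estimates follow by a Poincar\'e--Wirtinger argument applied to $\SigmaT{k}q_r - \SigmaTPhi{k}q_r$, using that this difference has zero mean on $T$ by the closure relations satisfied by each projector. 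Plugging these bounds into each of the three terms and finishing with Cauchy--Schwarz on the $T$-sum yields \eqref{eq:bh.consistency.2}.
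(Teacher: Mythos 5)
Your proposal is correct and follows essentially the same route as the paper: the same zero-jump identity from the homogeneous face unknowns, the same add-and-subtract of $\bmrTPsi{k+1}\ulbmvT$ with integration by parts producing the same three residual terms, the same use of \eqref{eq:discrete.terms.bound} and the trace inequality, and the same reduction of the extended projector's approximation properties to those of $\SigmaT{k}$ via the $\HONE(T)$-seminorm minimisation property after the invariance on $\Phi(T)$ removes the singular part. The only immaterial difference is that the paper applies the Poincar\'e--Wirtinger inequality directly to $q-\SigmaTPhi{k}q$ (which has zero mean by the projector's closure condition) rather than to the difference $\SigmaT{k}q_r-\SigmaTPhi{k}q_r$.
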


\begin{proof}
	It follows from the commutation property \eqref{eq:DT.commutation.property} and the definition \eqref{eq:ah.and.bh.def} of $\bh$ that
	\begin{multline*}
		\bh(\bmIhPsik \bmv, q_h) = \sum_{T\in\Th}-\int_{T} \PiTPhi{k} (\nabla \cdot \bmv) q_T = \sum_{T\in\Th}-\int_{T} (\nabla \cdot \bmv)q_T = -\int_{\Omega}(\nabla \cdot \bmv) q_h \\ = \rmb(\bmv, q_h).
	\end{multline*}
	Turn now to equation \eqref{eq:bh.consistency.2}. By the assumed $\HONE(\Omega)$-regularity of $q$ and the homogeneous condition on the discrete space, it holds that
	\[
	\sum_{T\in\Th}\sum_{F\in\Fh[T]} \int_F \bmvF \cdot q \norTF = \sum_{F\in\Fhi}\sum_{T\in\Th[F]} \int_F \bmvF \cdot q \norTF = 0,
	\]
	and thus
	\[
	\int_{\Omega} \nabla q \cdot \bmvh = \sum_{T\in\Th[F]}\SqBrac{\int_T \bmvT \cdot \nabla q - \sum_{F\in\Fh[T]}\int_F \bmvF \cdot q \norTF}.
	\]
	It follows from the definition \eqref{eq:bT.def} of $\bh$ that
	\[
	\bh(\ulbmvh, \SigmahPhi{k} q) = \sum_{T\in\Th}\SqBrac{\int_T \bmvT \cdot \nabla \SigmaTPhi{k} q - \sum_{F\in\Fh[T]}\int_F \bmvF \cdot \SigmaTPhi{k} q \norTF}.
	\]
	Therefore,
	\begin{multline}\label{eq:bh.consistency.proof.1}
		\int_{\Omega} \nabla q \cdot \bmvh - \bh(\ulbmvh, \SigmahPhi{k} q) \\
		= \sum_{T\in\Th}\SqBrac{\int_T \bmvT \cdot \nabla (q - \SigmaTPhi{k} q) - \sum_{F\in\Fh[T]}\int_F \bmvF \cdot (q - \SigmaTPhi{k} q) \norTF}.
	\end{multline}
	By adding and subtracting the term $\bmrTPsi{k+1}\ulbmvT$ to each of the integrals and integrating by parts it follows that	
	\begin{align}\label{eq:bh.consistency.proof.2}
		\int_{\Omega} & \nabla q \cdot \bmvh - \bh(\ulbmvh, \SigmahPhi{k} q) \nl
		\eq \sum_{T\in\Th}\Big[\int_T (\bmvT - \bmrTPsi{k+1}\ulbmvT) \cdot \nabla (q - \SigmaTPhi{k} q) - \sum_{F\in\Fh[T]}\int_F (\bmvF - \bmrTPsi{k+1}\ulbmvT) \cdot (q - \SigmaTPhi{k} q) \norTF \nl
		\plus \int_T \bmrTPsi{k+1}\ulbmvT \cdot \nabla (q - \SigmaTPhi{k} q) - \sum_{F\in\Fh[T]}\int_F \bmrTPsi{k+1}\ulbmvT \cdot (q - \SigmaTPhi{k} q) \norTF\Big]\nl
		\eq \sum_{T\in\Th}\Big[\int_T (\bmvT - \bmrTPsi{k+1}\ulbmvT) \cdot \nabla (q - \SigmaTPhi{k} q) - \sum_{F\in\Fh[T]}\int_F (\bmvF - \bmrTPsi{k+1}\ulbmvT) \cdot (q - \SigmaTPhi{k} q) \norTF \nl
		\plus \int_T \nabla \cdot \bmrTPsi{k+1}\ulbmvT \cdot (q - \SigmaTPhi{k} q)\Big].
	\end{align}
	Applying a triangle inequality on the sum and Cauchy--Schwarz inequalities on the integrals yields,
	\begin{align}
		\int_{\Omega} & \nabla q \cdot \bmvh - \bh(\ulbmvh, \SigmahPhi{k} q) \nl
		\les \sum_{T\in\Th}\Big[\hT^{-1}\norm[T]{\bmvT - \bmrTPsi{k+1}\ulbmvT} \hT\norm[T]{ \nabla (q - \SigmaTPhi{k} q)} + \norm[T]{\nabla \cdot \bmrTPsi{k+1}\ulbmvT} \norm[T]{q - \SigmaTPhi{k} q} \nl \plus \hT^{-\frac12}\Brac{\sum_{F\in\Fh[T]}\norm[F]{\bmvF - \bmrTPsi{k+1}\ulbmvT}^2}^\frac12 \hT^{\frac12}\Brac{\sum_{F\in\Fh[T]}\norm[F]{q - \SigmaTPhi{k} q}^2}^\frac12 
		\Big].
	\end{align}
	By invoking equation \eqref{eq:discrete.terms.bound} together with the bound,
	\[
	\norm[T]{\nabla \cdot \bmrTPsi{k+1}\ulbmvT} \lesssim \norm[T]{\nabla \bmrTPsi{k+1}\ulbmvT} \le \HHOnormTPsi{\ulbmvT},
	\]
	it is inferred that
	\begin{align}
		\int_{\Omega} & \nabla q \cdot \bmvh - \bh(\ulbmvh, \SigmahPhi{k} q) \nl
		\les \sum_{T\in\Th}\HHOnormTPsi{\ulbmvT}\Brac{\hT\norm[T]{ \nabla (q - \SigmaTPhi{k} q)} + \norm[T]{q - \SigmaTPhi{k} q}  + \hT^{\frac12}\Brac{\sum_{F\in\Fh[T]}\norm[F]{q - \SigmaTPhi{k} q}^2}^\frac12}.
	\end{align}
	Applying the continuous trace inequality \eqref{small.faces:eq:continuous.trace} and a Poincar\'{e}--Wirtinger inequality yields,
	\begin{align}
		\int_{\Omega} \nabla q \cdot \bmvh - \bh(\ulbmvh, \SigmahPhi{k} q)
		\les \sum_{T\in\Th}\HHOnormTPsi{\ulbmvT}\hT\norm[T]{ \nabla (q - \SigmaTPhi{k} q)} \nl
		\eq \sum_{T\in\Th}\HHOnormTPsi{\ulbmvT}\hT\norm[T]{ \nabla (q_r - \SigmaTPhi{k} q_r)}\nl
		\lea \sum_{T\in\Th}\HHOnormTPsi{\ulbmvT}\hT\norm[T]{ \nabla (q_r - \SigmaT{k} q_r)},
	\end{align}
	where the invariance of $\SigmaTPhi{k}$ on $\Phi(T)$ has been invoked to remove the singular part $\phi$, and in the final inequality the projector $\SigmaTPhi{k}$ is replaced with $\SigmaT{k}$ due to the former projector minimising the $\HONE(T)$-seminorm. The result follows by applying the approximation properties \parencite[Theorem 1.48]{di-pietro.droniou:2020:hybrid} of $\SigmaT{k}$ and invoking a Cauchy--Schwarz inequality on the sum.
\end{proof}

\begin{theorem}[$\inf$--$\sup$ stability]\label{thm:inf.sup.stability}
	For all $(\ulbmwh, z_h)\in\bmXhPsiPhik$, the following $\inf$--$\sup$ stability condition holds:
	\begin{equation}\label{eq:inf.sup.stability}
		\norm[X,h]{(\ulbmwh, z_h)} \lesssim \sup_{(\ulbmvh, q_h) \in \bmXhPsiPhik : \norm[X,h]{(\ulbmvh, q_h)} = 1} \brac{\nu\ah(\ulbmwh, \ulbmvh) + \bh(\ulbmvh, z_h) - \bh(\ulbmwh, q_h)}. 
	\end{equation}		
\end{theorem}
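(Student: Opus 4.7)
The plan is to apply the standard Fortin--Brezzi saddle-point argument, constructing a single test pair that simultaneously controls the velocity seminorm of $\ulbmwh$ and the pressure norm of $z_h$.

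The key ingredient is a Fortin-type lift of $z_h$. Since $z_h\in\POLYPhizr{k}(\Th)\subset\LTWO_0(\Omega)$ and $\Omega$ is bounded Lipschitz, Bogovskii's theorem yields $\bmv_{z_h}\in\HONEzr(\Omega)^d$ with $\DIV\bmv_{z_h}=z_h$ and $\|\nabla\bmv_{z_h}\|_\Omega\lesssim\norm[\Omega]{z_h}$. Set $\ulbmvh^\star\defeq\bmIhPsik\bmv_{z_h}\in\bmUhPsizrk$. The consistency \eqref{eq:bh.consistency.1} of $\bh$ immediately gives
\[
\bh(\ulbmvh^\star,z_h) = \rmb(\bmv_{z_h},z_h) = -\norm[\Omega]{z_h}^2,
\]
while Lemma \ref{lem:bmITPsik.bound} summed over elements yields $\HHOnormhPsi{\ulbmvh^\star}\lesssim\norm[\Omega]{z_h}$.

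With the lift in hand, I would test with $(\ulbmvh,q_h) = (\theta\ulbmwh - \nu^{-1}\ulbmvh^\star,\,\theta z_h)$ for a parameter $\theta\ge 1$ to be fixed. Because the pressure test is a multiple of $z_h$, the cross contributions $\bh(\ulbmwh,z_h)$ arising in $\bh(\ulbmvh,z_h)$ and $-\bh(\ulbmwh,q_h)$ cancel, leaving
\[
\nu\ah(\ulbmwh,\ulbmvh)+\bh(\ulbmvh,z_h)-\bh(\ulbmwh,q_h) = \theta\nu\ah(\ulbmwh,\ulbmwh) - \ah(\ulbmwh,\ulbmvh^\star) + \nu^{-1}\norm[\Omega]{z_h}^2.
\]
The coercivity \eqref{eq:aT.norm.equivalence} controls the first term from below by a fixed multiple of $\theta\nu\HHOnormhPsi{\ulbmwh}^2$, while Cauchy--Schwarz with the boundedness of $\ah$ and the Fortin stability gives $|\ah(\ulbmwh,\ulbmvh^\star)|\lesssim\HHOnormhPsi{\ulbmwh}\norm[\Omega]{z_h}$. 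Young's inequality with weights scaled by $\nu^{1/2}$ absorbs this cross term into a fraction of $\theta\nu\HHOnormhPsi{\ulbmwh}^2$ plus a fraction of $\nu^{-1}\norm[\Omega]{z_h}^2$; taking $\theta$ large enough (depending only on the constants just used) leaves a lower bound $\gtrsim \norm[X,h]{(\ulbmwh,z_h)}^2$. A triangle inequality together with $\HHOnormhPsi{\ulbmvh^\star}\lesssim\norm[\Omega]{z_h}$ gives $\norm[X,h]{(\ulbmvh,q_h)}\lesssim\theta\,\norm[X,h]{(\ulbmwh,z_h)}$, and dividing concludes the argument.

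The main obstacle is the Fortin identity $\bh(\ulbmvh^\star,z_h) = -\norm[\Omega]{z_h}^2$. It rests on the exact consistency \eqref{eq:bh.consistency.1} of $\bh$, which in turn hinges on $\PiTPhi{k}$ being the $L^2$-projector onto the \emph{enriched} pressure space $\POLY{k}(T)+\Phi(T)$: this ensures $\PiTPhi{k}$ fixes $z_h|_T$ pointwise, so that the continuous lift $\bmv_{z_h}$ transfers through the discretisation without loss. Were only polynomial pressures used in the divergence reconstruction, the identity would reproduce only the polynomial part of $z_h$ and an uncontrolled consistency error would appear in the mass-balance constraint, breaking the entire $\inf$--$\sup$ argument.
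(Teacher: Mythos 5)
Your proposal is correct and rests on the same two pillars as the paper's proof: coercivity of $\ah$ via \eqref{eq:aT.norm.equivalence}, and the discrete $\inf$--$\sup$ property of $\bh$ obtained from a Bogovskii lift of $z_h$ combined with the exact consistency \eqref{eq:bh.consistency.1} and the interpolator bound of Lemma \ref{lem:bmITPsik.bound}. The only difference is the final assembly: the paper feeds the resulting constants $C_{\rma}$ and $C_{\rmb}$ into the abstract saddle-point stability lemma of \textcite[Lemma A.11]{di-pietro.droniou:2020:hybrid}, whereas you unfold that lemma by hand with the explicit test pair $(\theta\ulbmwh-\nu^{-1}\ulbmvh^\star,\,\theta z_h)$ and a Young-inequality absorption --- the two routes are equivalent, yours being self-contained at the cost of some bookkeeping. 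Your closing observation that \eqref{eq:bh.consistency.1} holds exactly only because $\PiTPhi{k}$ projects onto the \emph{enriched} pressure space, so that it fixes $z_h$ elementwise, correctly pinpoints where the enrichment enters the stability argument.
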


\begin{proof}
	It follows trivially from the definition \eqref{eq:ah.and.bh.def} of $\ah$ and equation \eqref{eq:aT.norm.equivalence} that there exists a $C_{\rma} > 0$ independent of $h$ and $\nu$ such that
	\begin{equation}\label{eq:inf.sup.stability.proof.1}
		C_{\rma}^{-1}\nu\HHOnormhPsi{\ulbmwh}^2 \le \nu \ah(\ulbmwh, \ulbmwh) \le C_{\rma} \nu\HHOnormhPsi{\ulbmwh}^2.
	\end{equation}
	Consider now the stability of $\bh$. It is a classical result in functional analysis that for all $z\in\LTWO(\Omega)$ there exists a $\bmxi \in \HONEzr(\Omega)^d$ such that $-\nabla \cdot \bmxi = z$ and $\norm[\HONE(\Omega)^d]{\bmxi} \lesssim \norm[\Omega]{z}$ \parencite[cf.][Lemma 8.3]{di-pietro.droniou:2020:hybrid}. Choosing a $\bmxi \in \HONEzr(\Omega)^d$ such that $-\nabla \cdot \bmxi = z_h\in\POLYPhi{k}(\Th)$ and $\norm[\HONE(\Omega)^d]{\bmxi} \lesssim \norm[\Omega]{z_h}$, it holds that
	\begin{equation}\label{eq:inf.sup.stability.proof.2}
		\norm[\Omega]{z_h}^2 = -\int_{\Omega}(\nabla \cdot \bmxi)z_h = \rmb(\bmxi, z_h) = \bh(\bmIhPsik\bmxi, z_h),
	\end{equation}
	where equation \eqref{eq:bh.consistency.1} is invoked to yield the final equality. It is clear that
	\begin{equation}\label{eq:inf.sup.stability.proof.3}
		\bh(\bmIhPsik\bmxi, z_h) = \frac{\bh(\bmIhPsik\bmxi, z_h)}{\HHOnormhPsi{\bmIhPsik\bmxi}}\HHOnormhPsi{\bmIhPsik\bmxi} \le \sup_{\ulbmvh \in \bmUhPsik} \frac{|\bh(\ulbmvh, z_h)|}{\HHOnormhPsi{\ulbmvh}}\HHOnormhPsi{\bmIhPsik\bmxi}.
	\end{equation}
	Apply the interpolant bound \eqref{eq:bmITPsik.bound} to equation \eqref{eq:inf.sup.stability.proof.3} and use the a priori condition $\norm[\HONE(\Omega)^d]{\bmxi} \lesssim \norm[\Omega]{z_h}$ to write
	\begin{equation}\label{eq:inf.sup.stability.proof.4}
		\bh(\bmIhPsik\bmxi, z_h) \lesssim \sup_{\ulbmvh \in \bmUhPsik} \frac{|\bh(\ulbmvh, z_h)|}{\HHOnormhPsi{\ulbmvh}}\norm[\HONE(\Omega)^d]{\bmxi} \lesssim \sup_{\ulbmvh \in \bmUhPsik} \frac{|\bh(\ulbmvh, z_h)|}{\HHOnormhPsi{\ulbmvh}} \norm[\Omega]{z_h}.
	\end{equation}
	Combining \eqref{eq:inf.sup.stability.proof.4} and \eqref{eq:inf.sup.stability.proof.2} readily infers the existence of a $C_{\rmb}>0$ which is independent of $\nu$ and $h$ such that	
	\begin{equation}\label{eq:inf.sup.stability.proof.5}
		C_{\rmb}^{-1}\nu^{-\frac12}\norm[\Omega]{z_h} \le \sup_{\ulbmvh \in \bmUhPsik} \frac{|\bh(\ulbmvh, z_h)|}{\nu^{\frac12}\HHOnormhPsi{\ulbmvh}}.
	\end{equation}
			Consider now the saddle-point stability condition given by \textcite[Lemma A.11]{di-pietro.droniou:2020:hybrid} which infers the existence of a constant,
			\begin{equation*}
					C_{\rmA} = \Brac{C_{\rma}^{2}(1 + 2 C_{\rmb}^{2}C_{\rma}^2)^2 + 4  C_{\rmb}^{2}}^\frac12,
				\end{equation*}
			such that
			\begin{equation*}
					C_{\rmA}^{-1} \norm[X,h]{(\ulbmwh, z_h)} \le \sup_{(\ulbmvh, q_h) \in \bmXhPsiPhik : \norm[X,h]{(\ulbmvh, q_h)} = 1}|\nu \ah(\ulbmwh, \ulbmvh) + \bh(\ulbmvh, z_h) - \bh(\ulbmwh, q_h)|. \qedhere
				\end{equation*}
\end{proof}

\begin{proof}[Proof of Theorem \ref{thm:existence.and.uniqueness}]
	The existence and uniqueness of a solution to problem \eqref{eq:discrete} follows from the $\inf$--$\sup$ stability of the scheme \parencite[Theorem 2.34]{ern.guermond:2004:theory}.
\end{proof}

\begin{proof}[Proof of Theorem \ref{thm:error.estimate}]
	Let $(\ulbmuh, p_h) \in \bmXhPsiPhik$ be the solution to the discrete problem \eqref{eq:discrete} and $(\bmu, p) \in \HONE(\Omega)^d \times \LTWO_0(\Omega)$ be the unique solution to problem \eqref{eq:stokes} satisfying the additional regularity $\bmDelta\bmu \in \LTWO(\Omega)^d$ and $p \in \HONE(\Omega)$ with $(\bmu_r, p_r)\in\HS{k+2}(\Th)^d\times\HS{k+1}(\Th)$ denoting the regular parts of the solution and $(\bmpsi, \phi)\in\bmPsi(\Th)\times\Phi(\Th)$ the singular parts of the solution. By invoking the Third Strang Lemma \parencite[Lemma A.7]{di-pietro.droniou:2020:hybrid} and the consistency results \eqref{eq:ah.consistency}, \eqref{eq:bh.consistency.1} and \eqref{eq:bh.consistency.2}, it holds that
	%
	%
	\begin{multline}\label{eq:error.estimate.proof.1}
		\nu \ah(\ulbmuh - \bmIhPsik \bmu, \ulbmvh) + \bh(\ulbmvh, p_h - \SigmahPhi{k} p) - \bh(\ulbmuh - \bmIhPsik \bmu, q_h) 
		\\
		\lesssim \HHOnormhPsi{\ulbmvh}h^{k+1}\Brac{\nu \seminorm[\HS{k+2}(\Th)^d]{\bmu_r} + \seminorm[\HS{k+1}(\Th)]{p_r}}.
	\end{multline}
	Taking the supremum of \eqref{eq:error.estimate.proof.1} over all $(\ulbmvh, q_h)\in \bmXhPsiPhik$ with $\norm[X,h]{(\ulbmvh, q_h)} = 1$, and noting that $\HHOnormhPsi{\ulbmvh} \le \nu^{-\frac12}\norm[X,h]{(\ulbmvh, q_h)}$, it is inferred that
	\begin{multline*}
		\sup_{(\ulbmvh, q_h) \in \bmXhPsiPhik : \norm[X,h]{(\ulbmvh, q_h)} = 1}\Brac{\nu \ah(\ulbmuh - \bmIhPsik \bmu, \ulbmvh) + \bh(\ulbmvh, p_h - \SigmahPhi{k} p) - \bh(\ulbmuh - \bmIhPsik \bmu, q_h)} 
		\nl
		\lesssim h^{k+1}\Brac{\nu^\frac12 \seminorm[\HS{k+2}(\Th)^d]{\bmu_r} + \nu^{-\frac12}\seminorm[\HS{k+1}(\Th)]{p_r}}.
	\end{multline*}
	The result follows by applying the $\inf$--$\sup$ condition \eqref{eq:inf.sup.stability} with $\ulbmwh = \ulbmuh - \bmIhPsik \bmu$ and $z_h = p_h - \SigmahPhi{k} p$.
\end{proof}	

\section{Stokes Flow Around Submerged Cylinders}\label{sec:cylinders}

Inspired by the work of \textcite{wagner.moes.ea:2001:extended}, this study aims to explore the behaviour of the Stokes equations around submerged cylinders. The objective is to derive an analytical understanding of local fluid behaviour around a cylinder and subsequently enrich the local HHO spaces with the analytic solution.

The problem at hand involves finding solutions to the following set of equations:
\begin{subequations}\label{eq:stokes.circle}
	\begin{alignat}{2}
		-\bmDelta \bmu + \nabla p &= \bm{0} \quad &\textrm{in }& \R^2\backslash B_R, \label{eq:stokes.circle.momentum}\\
		\DIV \bmu &= 0 \quad &\textrm{in }& \R^2\backslash B_R, \label{eq:stokes.circle.mass}
	\end{alignat}
	subject to the boundary conditions:
	\begin{equation}\label{eq:stokes.circle.bc.restrictive}
		\bmu = \bmU_\infty \quad \textrm{as } |\bmx|\to\infty \quad;\quad \bmu = \bm{0} \quad \textrm{on } \partial B_R, 
	\end{equation}
	where $B_R$ denotes the circle of radius $R > 0$ centred at the origin and $\bmU_\infty$ represents a constant velocity field.
\end{subequations}
However, there exist no solutions to equations \eqref{eq:stokes.circle}, an unexpected result initially observed by \textcite{stokes:1851:effect}. Consequently, less restrictive boundary conditions are considered, expressed as:
\begin{equation}\label{eq:stokes.circle.bc}
	\bmu = \bmU_\infty \quad \textrm{at } |\bmx|=R_{\infty} \quad;\quad \bmu = \bm{0} \quad \textrm{on } \partial B_R,
\end{equation}
for some $R_\infty \gg 1$. For simplicity, the velocity field $\bmU_{\infty}$ is assumed to be given by $U \bme_x$, where $U\in\R$ and $\bme_x$ represents the unit vector in the $x$-direction.

Consider a stream function $\psi$ such that, in polar coordinates,
\begin{equation}\label{eq:u.stream}
	\bmu(r, \theta) = -\frac{1}{r}\partial_\theta \psi(r, \theta) \bme_r + \partial_r \psi(r, \theta) \bme_\theta,
\end{equation}
where $\bme_r$ and $\bme_\theta$ denote the radial and tangential unit vectors, respectively. Substituting equation \eqref{eq:u.stream} into \eqref{eq:stokes.circle.momentum} and taking the $\CURL$, the following equation is obtained:
\begin{subequations}\label{eq:stokes.circle.stream}
	\begin{equation}\label{eq:stokes.stream.circle}
		\Delta^2 \psi = 0,
	\end{equation}
	subject to the boundary conditions
	\begin{align}\label{eq:stokes.stream.circle.bc} 	
		\partial_{r}\psi(R_\infty, \theta) \eq -U\sin(\theta),\nl		
		\partial_{\theta}\psi(R_\infty, \theta) \eq -UR_{\infty}\cos(\theta), \nl 
		\partial_r\psi(R, \theta) \eq 0, \nl 
		\partial_\theta\psi(R, \theta) \eq 0.
	\end{align}
\end{subequations}
Due to the boundary conditions at $R_{\infty}$, it is expected that the solution will take the form:
\[
\psi(r, \theta) = \zeta(r) \sin(\theta).
\]
Upon substituting this back into equation \eqref{eq:stokes.circle.stream}, a solution for $\zeta(r)$ is obtained as follows:
\[
\zeta(r) = \frac{(R^2+R_{\infty}^2)U}{R^2-R_{\infty}^2+(R^2+R_{\infty}^2)\ln(\frac{R_{\infty}}{R})}\BRAC{\frac{r^2-R^2}{2r}-r\ln\Brac{\frac{r}{R}}+\frac{(r^2-R^2)^2}{2r(R^2+R_{\infty}^2)}}.
\]
A brief asymptotic analysis reveals that
\[
\zeta(r) \sim \frac{U}{\ln(R_{\infty})}\zeta_1(r) + \frac{U}{R_{\infty}^2\ln(R_{\infty})}\zeta_2(r) \qquad \textrm{as } R_{\infty} \to \infty,
\]
where
\[
\zeta_1(r) = \frac{r^2-R^2}{2r}-r\ln\Brac{\frac{r}{R}}\quad\textrm{and}\quad
\zeta_2(r) = \frac{(r^2-R^2)^2}{2r}.
\]
The two solutions $\zeta_1$ and $\zeta_2$ correspond to the two solutions considered by \textcite[Equations A12, A13]{wagner.moes.ea:2001:extended}. However, as the multiplicative constant of $\zeta_1$ dominates that of $\zeta_2$ as $R_{\infty}\to\infty$, the focus here will be solely on the solution $\zeta_1$. Therefore, the stream function considered is
\[
\psi(r, \theta) = \zeta_1(r) \sin(\theta).
\]
The corresponding velocity and pressure fields are given by the following expressions:

\begin{equation}\label{eq:cylinder.velocity}
	\bmu = \frac{R^2-r^2}{2r^4}\Brac{(x^2-y^2)\bme_x + 2xy \bme_y} + \ln\BRAC{\frac{r}{R}}\bme_x,
\end{equation}
and
\begin{equation}\label{eq:cylinder.pressure}
	p = C - \frac{2x}{r^2},
\end{equation}
where $C$ is an arbitrary constant.

\subsection{Single Submerged Cylinder with Manufactured Solution}\label{sec:cylinders.testA}

Consider the domain $\Omega = (0,1)^2 \setminus B_R(\bmx_0)$, where $B_R(\bmx_0)$ represents a ball of radius $R$ centred at $\bmx_0 = (\frac12, \frac12)$.

Let $\hat{\bmu}$ and $\hat{p}$ denote the asymptotic solutions \eqref{eq:cylinder.velocity} and \eqref{eq:cylinder.pressure} around the cylinder $B_R(\bmx_0)$, with the additive constant in $\hat{p}$ set to $0$.

An exact solution to the Stokes problem \eqref{eq:stokes} with viscosity $\nu=1$ is introduced as follows:
\[
\bmu = \hat{\bmu} + \sin(\pi x) \sin(\pi y)\left(\sin(\pi x) \cos(\pi y)\bme_x - \cos(\pi x) \sin(\pi y)\bme_y\right),
\]
and
\[
p = \hat{p} + (x - \frac12)(y - \frac12)^2.
\]

The enrichment spaces are defined as follows:
\[
\bmPsi(T) = \SPAN{\delta_\gamma(T) \hat{\bmu}}, \quad \Phi(T) = \SPAN{\delta_\gamma(T) \hat{p}},
\]
where $\delta_\gamma:\Th\to\{0,1\}$ is a cutoff function taking the value $1$ on elements with a centre of mass within a distance $\gamma>0$ of the cylinder $B_R(\bmx_0)$, and $0$ otherwise.

While both $\hat{\bmu}$ and $\hat{p}$ are smooth in the domain $\Omega$, they exhibit singularities at the centre of the cylinder $B_R(\bmx_0)$. Thus, as the cylinder radius $R$ approaches $0$, $\hat{\bmu}$ and $\hat{p}$ are expected to become nearly singular. Consequently, the enriched hybrid high-order method is anticipated to outperform the classical HHO method as $R\to 0$.

The enriched HHO method described in Section \ref{sec:model} is implemented using the open-source C++ library \texttt{PolyMesh} \parencite{polymesh}. A sequence of uniform Cartesian meshes is considered, and the cylinder $B_R(\bmx_0)$ is `cut out' of the mesh. The basis functions on curved edges are generated by first creating a spanning set, and removing the linearly dependent basis functions using the \texttt{FullPivLU} class found in the Eigen library, with documentation available at \url{https://eigen.tuxfamily.org/dox/ classEigen_1_1FullPivLU.html}.

Mesh data for the sequences of meshes of the domain $\Omega$ with radii $R=0.1$ and $R=0.01$ is displayed in Tables \ref{table:cylinder.R01} and \ref{table:cylinder.R001}, respectively.

\begin{table}[!ht]
	\centering
	\pgfplotstableread{data/nonenriched_circle_withsource_r01_k0.dat}\loadedtable
	\pgfplotstabletypeset
	[
	columns={MeshTitle, MeshSize,NbCells,NbInternalEdges}, 
	columns/MeshTitle/.style={column name=Mesh \#},
	columns/MeshSize/.style={column name=\(h\),/pgf/number format/.cd,fixed,zerofill,precision=4},
	columns/NbCells/.style={column name=Nb. Elements},
	columns/NbInternalEdges/.style={column name=Nb. Internal Edges},
	every head row/.style={before row=\toprule,after row=\midrule},
	every last row/.style={after row=\bottomrule} 
	]\loadedtable
	\caption{Mesh data of the domain $\Omega$ with $R = 0.1$}
	\label{table:cylinder.R01}
\end{table}
\begin{table}[!ht]
	\centering
	\pgfplotstableread{data/nonenriched_circle_withsource_r001_k0.dat}\loadedtable
	\pgfplotstabletypeset
	[
	columns={MeshTitle, MeshSize,NbCells,NbInternalEdges}, 
	columns/MeshTitle/.style={column name=Mesh \#},
	columns/MeshSize/.style={column name=\(h\),/pgf/number format/.cd,fixed,zerofill,precision=4},
	columns/NbCells/.style={column name=Nb. Elements},
	columns/NbInternalEdges/.style={column name=Nb. Internal Edges},
	every head row/.style={before row=\toprule,after row=\midrule},
	every last row/.style={after row=\bottomrule} 
	]\loadedtable
	\caption{Mesh data of the domain $\Omega$ with $R = 0.01$}
	\label{table:cylinder.R001}
\end{table}

In Figure \ref{fig:meshes.cylinder}, Mesh 1 is plotted for the two radii $R=0.1$ and $R=0.01$ considered.

\begin{figure}[!ht]
	\centering
	\includegraphics[width=0.4\textwidth]{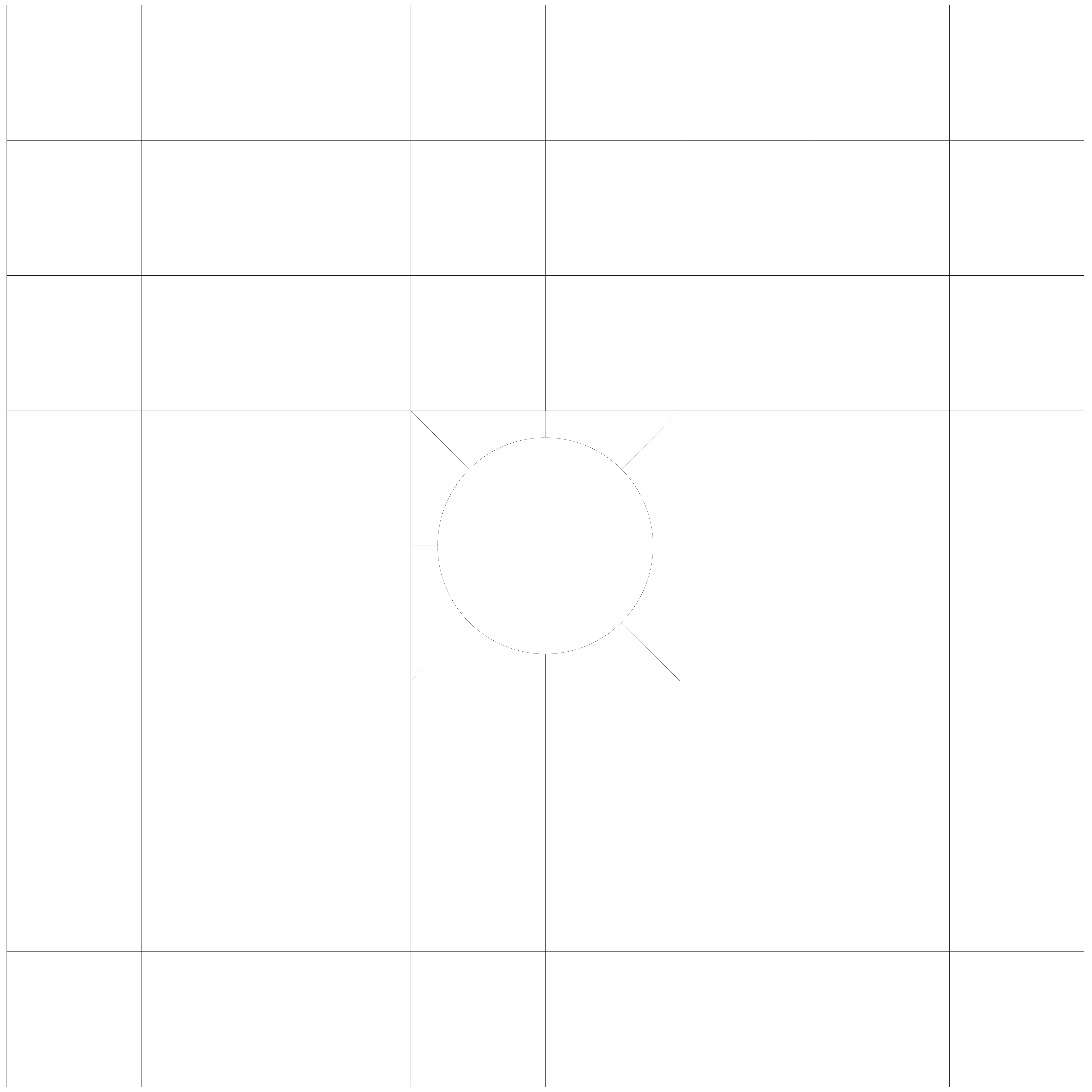}\hspace{0.5cm}
	\includegraphics[width=0.4\textwidth]{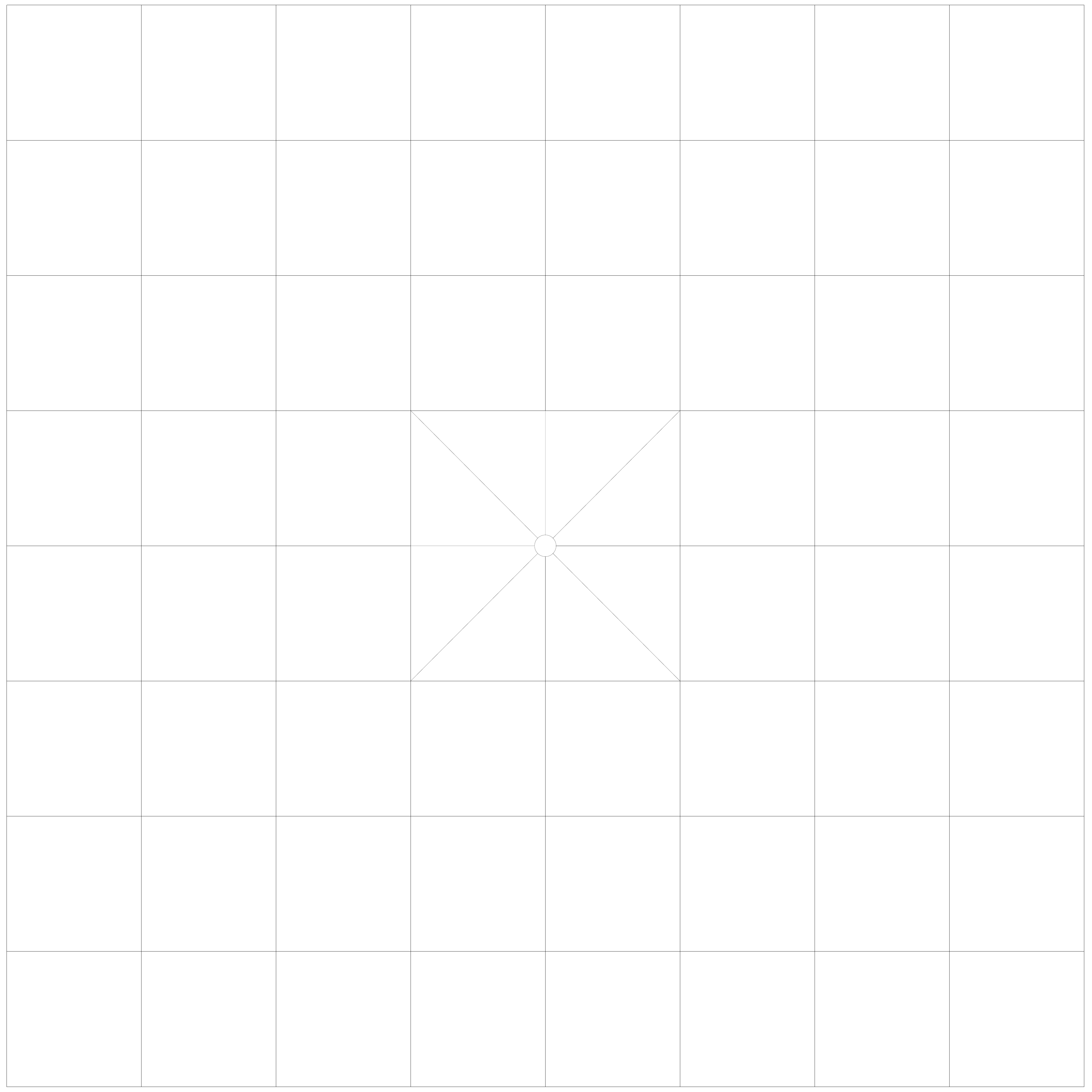} 
	\caption{Mesh 1 with $R=0.1$ (left) and $R=0.01$ (right)}
	\label{fig:meshes.cylinder}
\end{figure}

To measure the accuracy of the scheme, the following relative errors are defined:
\[
E_{0,h} = \frac{\norm[\Omega]{\bmrhPsi{k+1}\ulbmuh - \bmPi_{h, \bmPsi}^{k+1}\bmu}}{\norm[\Omega]{\bmPi_{h, \bmPsi}^{k+1}\bmu}},
\]
where $\bmPi_{h, \bmPsi}^{k+1}|_T=\bmPi_{T, \bmPsi}^{k+1}$ and $\bmPi_{T, \bmPsi}^{k+1}$ denotes the $L^2$-orthogonal projector on $\bmPOLYPsi{k+1}(T)$, 
\[
E_{\rma,h} = \frac{\ah(\ulbmuh - \bmIhPsi{k}\bmu, \ulbmuh - \bmIhPsi{k}\bmu)^\frac12}{\ah(\bmIhPsi{k}\bmu, \bmIhPsi{k}\bmu)^\frac12}\quad\textrm{and}\quad
E_{p,h} = \frac{\norm[\Omega]{p_h - \Pi_{h, \Phi}^{k}p}}{\norm[\Omega]{\Pi_{h, \Phi}^{k}p}}.
\]

As the enrichment of the HHO scheme increases the number of global degrees of freedom, it is most appropriate to test convergence rates against the degrees of freedom. The discrete Stokes system \eqref{eq:discrete} is amenable to static condensation as described by \textcite{di-pietro.ern.ea:2016:discontinuous}. After performing static condensation, the only globally coupled degrees of freedom remaining are the velocity unknowns on each internal face, and one pressure unknown on each element. As such, the number of global degrees of freedom is given by
\[
\textrm{DOFs} = \CARD{\Th} + \sum_{F\in\Fhi} \dim{\bmPOLYn{k}(F)}.
\]

In Figure \ref{fig:circle_test_R01}, a classical (non-enriched) HHO scheme as well as two enriched schemes with local enrichment $\gamma=0.1$ and $\gamma=0.2$ are tested on the sequence of meshes with $R=0.1$. It is evident that the enriched schemes achieve better approximation rates against the degrees of freedom.

\begin{figure}[!ht]
	\centering
	\ref{stokes.rtest.legend}
	\vspace{0.5cm}\\
	\subcaptionbox{$E_{0,h}$ vs DOFs, $k=0$}
	{
		\begin{tikzpicture}[scale=0.52]
			\begin{loglogaxis}[ legend columns=3, legend to name=stokes.rtest.legend ]
				\legend{Non-enriched, Locally enriched ($\gamma = 0.1$), Locally enriched ($\gamma = 0.2$)};
				\addplot table[x=DOFs,y=L2Error] {data/nonenriched_circle_withsource_r01_k0.dat};
				\addplot table[x=DOFs,y=L2Error] {data/enriched_circle_local01_withsource_r01_k0.dat};
				\addplot table[x=DOFs,y=L2Error] {data/enriched_circle_local02_withsource_r01_k0.dat};
				\reverseLogLogSlopeTriangle{0.50}{0.4}{0.1}{1}{black};
			\end{loglogaxis}
		\end{tikzpicture}
	}
	\subcaptionbox{$E_{\a,h}$ vs DOFs, $k=0$}
	{
		\begin{tikzpicture}[scale=0.52]
			\begin{loglogaxis}
				\addplot table[x=DOFs,y=EnergyError] {data/nonenriched_circle_withsource_r01_k0.dat};
				\addplot table[x=DOFs,y=EnergyError] {data/enriched_circle_local01_withsource_r01_k0.dat};
				\addplot table[x=DOFs,y=EnergyError] {data/enriched_circle_local02_withsource_r01_k0.dat};
				\reverseLogLogSlopeTriangle{0.50}{0.4}{0.1}{0.5}{black};
			\end{loglogaxis}
		\end{tikzpicture}
	}
	\subcaptionbox{$E_{\rmp,h}$ vs DOFs, $k=0$}
	{
		\begin{tikzpicture}[scale=0.52]
			\begin{loglogaxis}
				\addplot table[x=DOFs,y=PressureError] {data/nonenriched_circle_withsource_r01_k0.dat};
				\addplot table[x=DOFs,y=PressureError] {data/enriched_circle_local01_withsource_r01_k0.dat};
				\addplot table[x=DOFs,y=PressureError] {data/enriched_circle_local02_withsource_r01_k0.dat};
				\reverseLogLogSlopeTriangle{0.50}{0.4}{0.1}{0.5}{black};
			\end{loglogaxis}
		\end{tikzpicture}
	}
	%
	%
	\vspace{0.5cm}\\
	\subcaptionbox{$E_{0,h}$ vs DOFs, $k=1$}
	{
		\begin{tikzpicture}[scale=0.52]
			\begin{loglogaxis}
				\addplot table[x=DOFs,y=L2Error] {data/nonenriched_circle_withsource_r01_k1.dat};
				\addplot table[x=DOFs,y=L2Error] {data/enriched_circle_local01_withsource_r01_k1.dat};
				\addplot table[x=DOFs,y=L2Error] {data/enriched_circle_local02_withsource_r01_k1.dat};
				\reverseLogLogSlopeTriangle{0.50}{0.4}{0.1}{1.5}{black};
			\end{loglogaxis}
		\end{tikzpicture}
	}
	\subcaptionbox{$E_{\a,h}$ vs DOFs, $k=1$}
	{
		\begin{tikzpicture}[scale=0.52]
			\begin{loglogaxis}
				\addplot table[x=DOFs,y=EnergyError] {data/nonenriched_circle_withsource_r01_k1.dat};
				\addplot table[x=DOFs,y=EnergyError] {data/enriched_circle_local01_withsource_r01_k1.dat};
				\addplot table[x=DOFs,y=EnergyError] {data/enriched_circle_local02_withsource_r01_k1.dat};
				\reverseLogLogSlopeTriangle{0.50}{0.4}{0.1}{1}{black};
			\end{loglogaxis}
		\end{tikzpicture}
	}
	\subcaptionbox{$E_{\rmp,h}$ vs DOFs, $k=1$}
	{
		\begin{tikzpicture}[scale=0.52]
			\begin{loglogaxis}
				\addplot table[x=DOFs,y=PressureError] {data/nonenriched_circle_withsource_r01_k1.dat};
				\addplot table[x=DOFs,y=PressureError] {data/enriched_circle_local01_withsource_r01_k1.dat};
				\addplot table[x=DOFs,y=PressureError] {data/enriched_circle_local02_withsource_r01_k1.dat};
				\reverseLogLogSlopeTriangle{0.50}{0.4}{0.1}{1}{black};
			\end{loglogaxis}
		\end{tikzpicture}
	}
	\caption{Error vs DOFs, \(R = 0.1\)}
	\label{fig:circle_test_R01}
\end{figure}
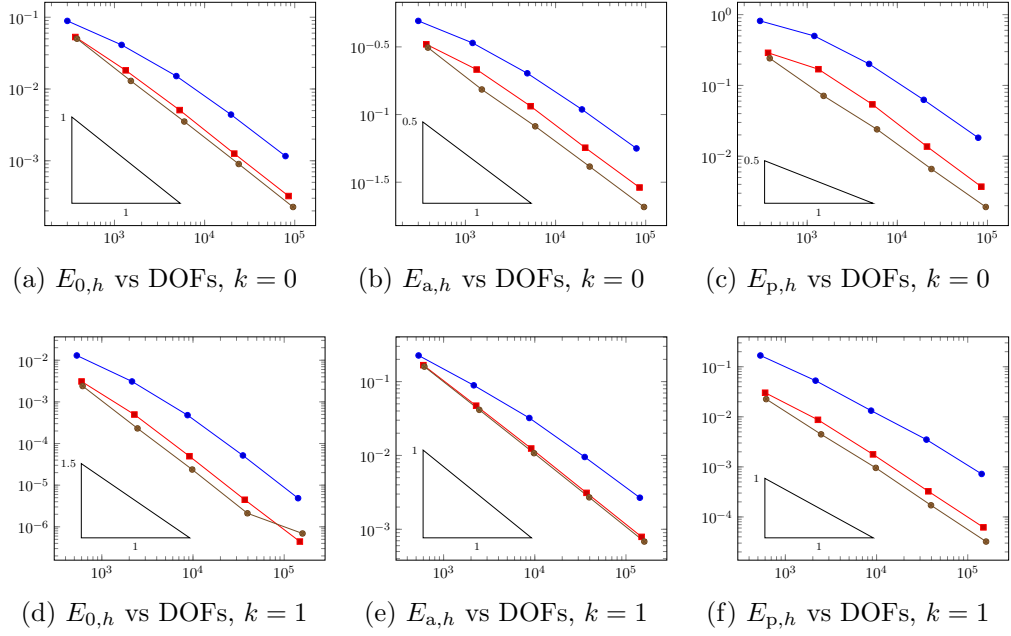

In Figure \ref{fig:circle_test_R001}, the same test is conducted on the mesh sequence with $R=0.01$. The enriched schemes perform significantly better than the non-enriched scheme. Moreover, the difference between the enriched and non-enriched scheme is far greater than when $R=0.1$. This matches the theoretical expectations as the velocity and pressure become increasingly steep close to the submerged cylinder as $R\to0$.

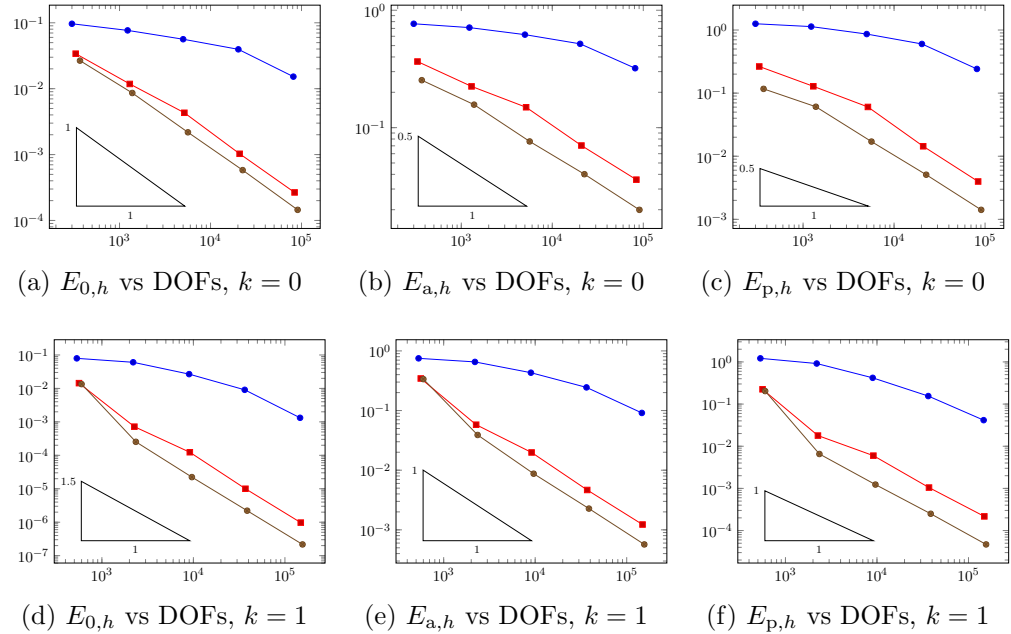
\begin{figure}[!ht]
	\centering
	\ref{stokes.rtest.legend}
	\vspace{0.5cm}\\
	\subcaptionbox{$E_{0,h}$ vs DOFs, \(k=0\)}
	{
		\begin{tikzpicture}[scale=0.52]
			\begin{loglogaxis}
				\addplot table[x=DOFs,y=L2Error] {data/nonenriched_circle_withsource_r001_k0.dat};
				\addplot table[x=DOFs,y=L2Error] {data/enriched_circle_local01_withsource_r001_k0.dat};
				\addplot table[x=DOFs,y=L2Error] {data/enriched_circle_local02_withsource_r001_k0.dat};
				\reverseLogLogSlopeTriangle{0.50}{0.4}{0.1}{1}{black};
			\end{loglogaxis}
		\end{tikzpicture}
	}
	\subcaptionbox{$E_{\a,h}$ vs DOFs, \(k=0\)}
	{
		\begin{tikzpicture}[scale=0.52]
			\begin{loglogaxis}
				\addplot table[x=DOFs,y=EnergyError] {data/nonenriched_circle_withsource_r001_k0.dat};
				\addplot table[x=DOFs,y=EnergyError] {data/enriched_circle_local01_withsource_r001_k0.dat};
				\addplot table[x=DOFs,y=EnergyError] {data/enriched_circle_local02_withsource_r001_k0.dat};
				\reverseLogLogSlopeTriangle{0.50}{0.4}{0.1}{0.5}{black};
			\end{loglogaxis}
		\end{tikzpicture}
	}
	\subcaptionbox{$E_{\rmp,h}$ vs DOFs, \(k=0\)}
	{
		\begin{tikzpicture}[scale=0.52]
			\begin{loglogaxis}
				\addplot table[x=DOFs,y=PressureError] {data/nonenriched_circle_withsource_r001_k0.dat};
				\addplot table[x=DOFs,y=PressureError] {data/enriched_circle_local01_withsource_r001_k0.dat};
				\addplot table[x=DOFs,y=PressureError] {data/enriched_circle_local02_withsource_r001_k0.dat};
				\reverseLogLogSlopeTriangle{0.50}{0.4}{0.1}{0.5}{black};
			\end{loglogaxis}
		\end{tikzpicture}
	}
	%
	%
	\vspace{0.5cm}\\
	\subcaptionbox{$E_{0,h}$ vs DOFs, \(k=1\)}
	{
		\begin{tikzpicture}[scale=0.52]
			\begin{loglogaxis}
				\addplot table[x=DOFs,y=L2Error] {data/nonenriched_circle_withsource_r001_k1.dat};
				\addplot table[x=DOFs,y=L2Error] {data/enriched_circle_local01_withsource_r001_k1.dat};
				\addplot table[x=DOFs,y=L2Error] {data/enriched_circle_local02_withsource_r001_k1.dat};
				\reverseLogLogSlopeTriangle{0.50}{0.4}{0.1}{1.5}{black};
			\end{loglogaxis}
		\end{tikzpicture}
	}
	\subcaptionbox{$E_{\a,h}$ vs DOFs, \(k=1\)}
	{
		\begin{tikzpicture}[scale=0.52]
			\begin{loglogaxis}
				\addplot table[x=DOFs,y=EnergyError] {data/nonenriched_circle_withsource_r001_k1.dat};
				\addplot table[x=DOFs,y=EnergyError] {data/enriched_circle_local01_withsource_r001_k1.dat};
				\addplot table[x=DOFs,y=EnergyError] {data/enriched_circle_local02_withsource_r001_k1.dat};
				\reverseLogLogSlopeTriangle{0.50}{0.4}{0.1}{1}{black};
			\end{loglogaxis}
		\end{tikzpicture}
	}
	\subcaptionbox{$E_{\rmp,h}$ vs DOFs, \(k=1\)}
	{
		\begin{tikzpicture}[scale=0.52]
			\begin{loglogaxis}
				\addplot table[x=DOFs,y=PressureError] {data/nonenriched_circle_withsource_r001_k1.dat};
				\addplot table[x=DOFs,y=PressureError] {data/enriched_circle_local01_withsource_r001_k1.dat};
				\addplot table[x=DOFs,y=PressureError] {data/enriched_circle_local02_withsource_r001_k1.dat};
				\reverseLogLogSlopeTriangle{0.50}{0.4}{0.1}{1}{black};
			\end{loglogaxis}
		\end{tikzpicture}
	}
	\caption{Error vs DOFs, \(R = 0.01\)}
	\label{fig:circle_test_R001}
\end{figure}

\subsection{Multiple Submerged Cylinders with Far-Field Boundary Conditions}\label{sec:cylinders.testB}

This section investigates the computational domain $\Omega = (0,1)^2 \backslash \bigcup_i B_{R_i}(\bmx_i)$, where four cylinders $B_{R_i}(\bmx_i)$ are positioned with centres and radii given by:	
\begin{alignat*}{3}
	\bmx_1 & = (0.4, 0.275), & \quad & R_1 & = 0.01, \\
	\bmx_2 & = (0.2, 0.25),  & \quad & R_2 & = 0.02, \\
	\bmx_3 & = (0.7, 0.5),   & \quad & R_3 & = 0.075, \\
	\bmx_4 & = (0.3, 0.75),  & \quad & R_4 & = 0.015.
\end{alignat*}

The far-field condition $\bmu = \bme_\bmx$ is imposed on the boundary of the square $(0,1)^2$, while the no-slip condition $\bmu = \bm{0}$ is enforced on the boundary of each cylinder.

A sequence of Cartesian meshes is considered, each with the cylinders `cut out' of the mesh. The mesh details are summarised in Table \ref{table:4circles.data}. Figure \ref{fig:cell.enrichment.plot} visualises an example mesh and two local enrichment schemes with $\gamma = 0.1$ and $\gamma = 0.2$, illustrating the number of enrichment functions on each element.

\begin{table}[!ht]
	\centering
	\pgfplotstableread{data/nonenriched_4circles_k0.dat}\loadedtable
	\pgfplotstabletypeset
	[
	columns={MeshTitle,MeshSize,NbCells,NbInternalEdges}, 
	columns/MeshTitle/.style={column name=Mesh \#},
	columns/MeshSize/.style={column name=\(h\),/pgf/number format/.cd,fixed,zerofill,precision=4},
	columns/NbCells/.style={column name=Nb. Elements},
	columns/NbInternalEdges/.style={column name=Nb. Internal Edges},
	every head row/.style={before row=\toprule,after row=\midrule},
	every last row/.style={after row=\bottomrule} 
	]\loadedtable
	\caption{Mesh data with multiple submerged cylinders}
	\label{table:4circles.data}
\end{table}

\begin{figure}[!ht]
	\centering
	\includegraphics[width=\linewidth]{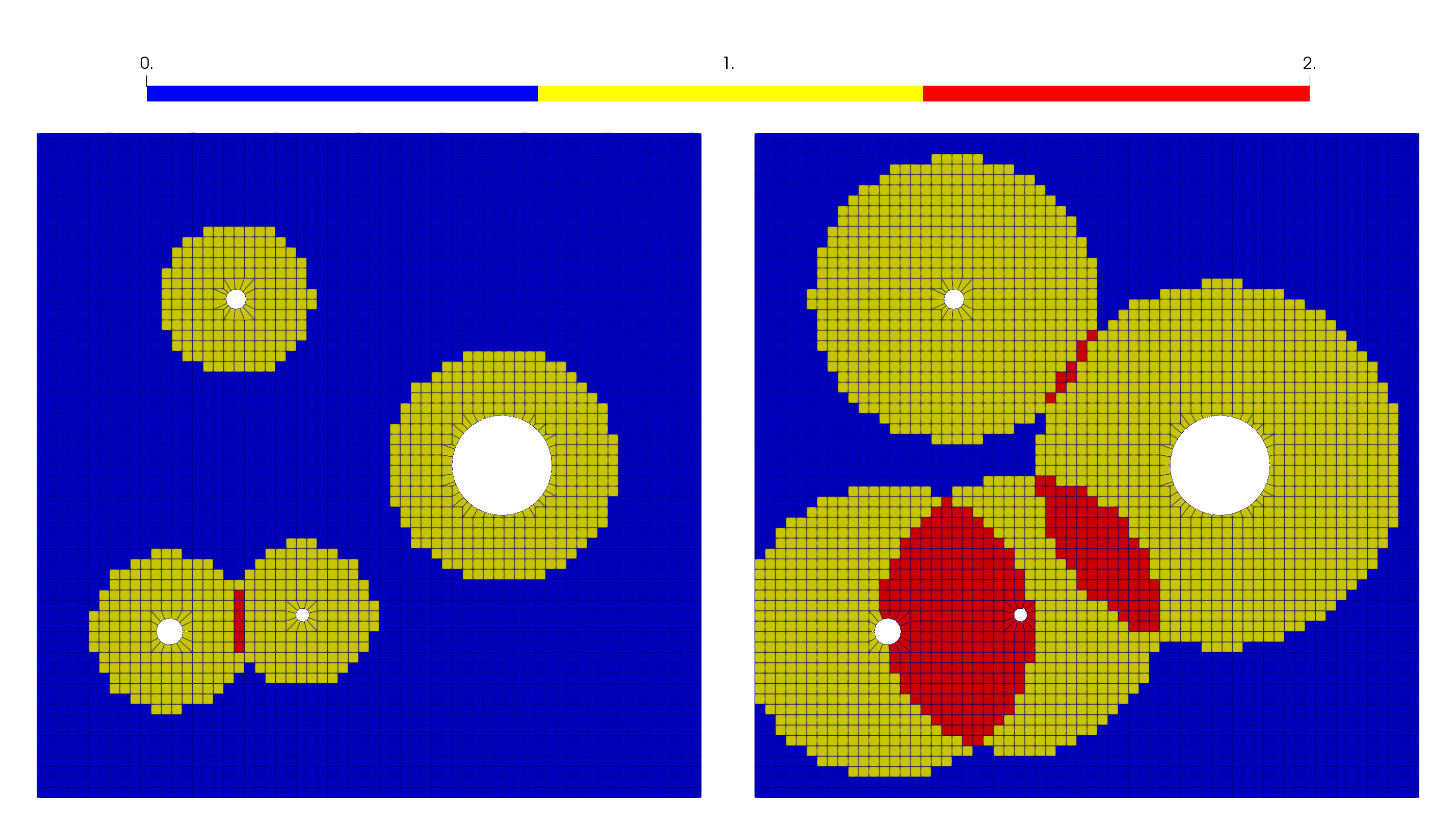}
	\caption{Local enrichment schemes on Mesh 4. Colour indicates the dimension of the enrichment space on each element. Left panel: $\gamma = 0.1$. Right panel: $\gamma = 0.2$}
	\label{fig:cell.enrichment.plot}
\end{figure}

Since the exact solution to this problem remains unknown, simulations are performed on Mesh 5 with $k=1$ and $\gamma = 0.2$. The resulting discrete velocities and pressures are denoted as $\bmrhPsi{k+1}\ulbmuh = \bmu_h^*$ and $p_h = p_h^*$, serving as the reference velocity and pressure fields. Quantities of interest are:

\[
\norm[\Omega]{p_h^*} \approx 15.610024 \quad ; \quad \seminorm[\HONE(\Th)]{u_h^*} \approx 5.7772666.
\]

The following metrics are defined to assess the performance of the scheme on coarser meshes (both enriched and non-enriched):

\[
E_p = \Seminorm{\norm[\Omega]{p_h} - \norm[\Omega]{p_h^*}} \quad\textrm{and}\quad E_\bmu = \SEMINORM{\seminorm[\HONE(\Th)]{\bmrhPsi{k+1}\ulbmuh} - \seminorm[\HONE(\Th)]{u_h^*}}.
\]

As these measures are not errors, the focus is not on the rate of convergence but rather on observing stable convergence and investigating the differences between the two schemes.

In Figures \ref{fig:4circles_test_k0} and \ref{fig:4circles_test_k1}, the convergence results are presented for $E_\bmu$ and $E_p$ on the sequence of meshes described in Table \ref{table:4circles.data} with $k=0$ and $k=1$, respectively. It is evident that the enriched schemes outperform the non-enriched scheme. Notably, there is little distinction between the two local enrichment schemes with $\gamma = 0.1$ and $\gamma = 0.2$ when plotted against the degrees of freedom. In Figure \ref{fig:4circles_test_k1}, the last data point for the test with $\gamma = 0.2$ is omitted, as the quantities $E_\bmu$ and $E_p$ are trivially zero due to coinciding with the reference solution.

\begin{figure}[!ht]
	\centering
	\ref{stokes.rtest.legend}
	\vspace{0.5cm}\\
	\subcaptionbox{$E_\bmu$ vs DOFs}
	{
		\begin{tikzpicture}[scale=0.52]
			\begin{loglogaxis}
				\addplot table[x=DOFs,y=H1Error] {data/nonenriched_4circles_k0.dat};
				\addplot table[x=DOFs,y=H1Error] {data/enriched_4circles_local01_k0.dat};
				\addplot table[x=DOFs,y=H1Error] {data/enriched_4circles_local02_k0.dat};
			\end{loglogaxis}
		\end{tikzpicture}
	}
	\subcaptionbox{$E_p$ vs DOFs}
	{
		\begin{tikzpicture}[scale=0.52]
			\begin{loglogaxis}
				\addplot table[x=DOFs,y=PressureError] {data/nonenriched_4circles_k0.dat};
				\addplot table[x=DOFs,y=PressureError] {data/enriched_4circles_local01_k0.dat};
				\addplot table[x=DOFs,y=PressureError] {data/enriched_4circles_local02_k0.dat};
			\end{loglogaxis}
		\end{tikzpicture}
	}
	\caption{Error vs DOFs, \(k=0\)}
	\label{fig:4circles_test_k0}
\end{figure}
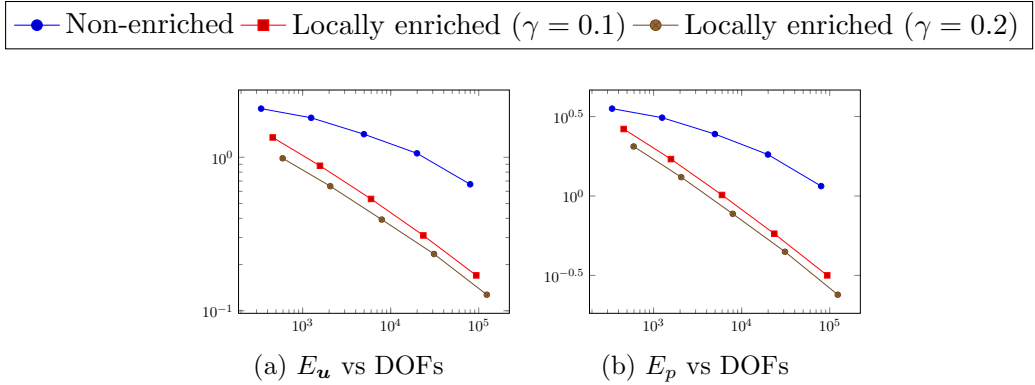

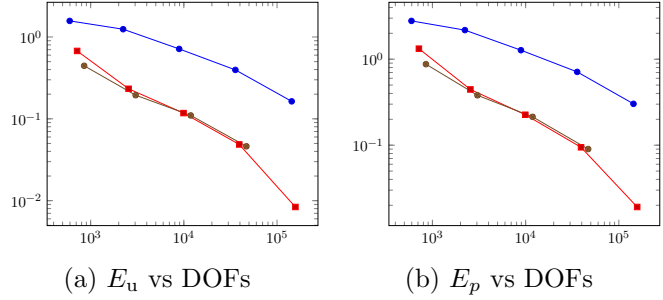
\begin{figure}[!ht]
	\centering
	\ref{stokes.rtest.legend}
	\vspace{0.5cm}\\
	\subcaptionbox{$E_\rmu$ vs DOFs}
	{
		\begin{tikzpicture}[scale=0.52]
			\begin{loglogaxis}
				\addplot table[x=DOFs,y=H1Error] {data/nonenriched_4circles_k1.dat};
				\addplot table[x=DOFs,y=H1Error] {data/enriched_4circles_local01_k1.dat};
				\addplot table[x=DOFs,y=H1Error] {data/enriched_4circles_local02_k1.dat};
			\end{loglogaxis}
		\end{tikzpicture}
	}
	\subcaptionbox{$E_p$ vs DOFs}
	{
		\begin{tikzpicture}[scale=0.52]
			\begin{loglogaxis}
				\addplot table[x=DOFs,y=PressureError] {data/nonenriched_4circles_k1.dat};
				\addplot table[x=DOFs,y=PressureError] {data/enriched_4circles_local01_k1.dat};
				\addplot table[x=DOFs,y=PressureError] {data/enriched_4circles_local02_k1.dat};
			\end{loglogaxis}
		\end{tikzpicture}
	}
	\caption{Error vs DOFs, \(k=1\)}
	\label{fig:4circles_test_k1}
\end{figure}

In Figure \ref{fig:stream.plots}, stream plots of the velocity fields on Mesh 4 with $k=1$ are displayed for both the non-enriched and locally enriched cases ($\gamma = 0.2$). While discerning differences between the two plots may not be straightforward, Figure \ref{fig:velocity.difference} illustrates the magnitude of the difference between the two schemes. Notably, the difference is most pronounced around the cylinders with the smallest radii, aligning with theoretical expectations. Furthermore, the magnitude of this difference can be as large as $0.13$, a significant value considering that the velocity field's largest magnitude is $1.45$, and the velocity is at its slowest around the cylinders.

\begin{figure}[!ht]
	\centering
	\includegraphics[width=\linewidth]{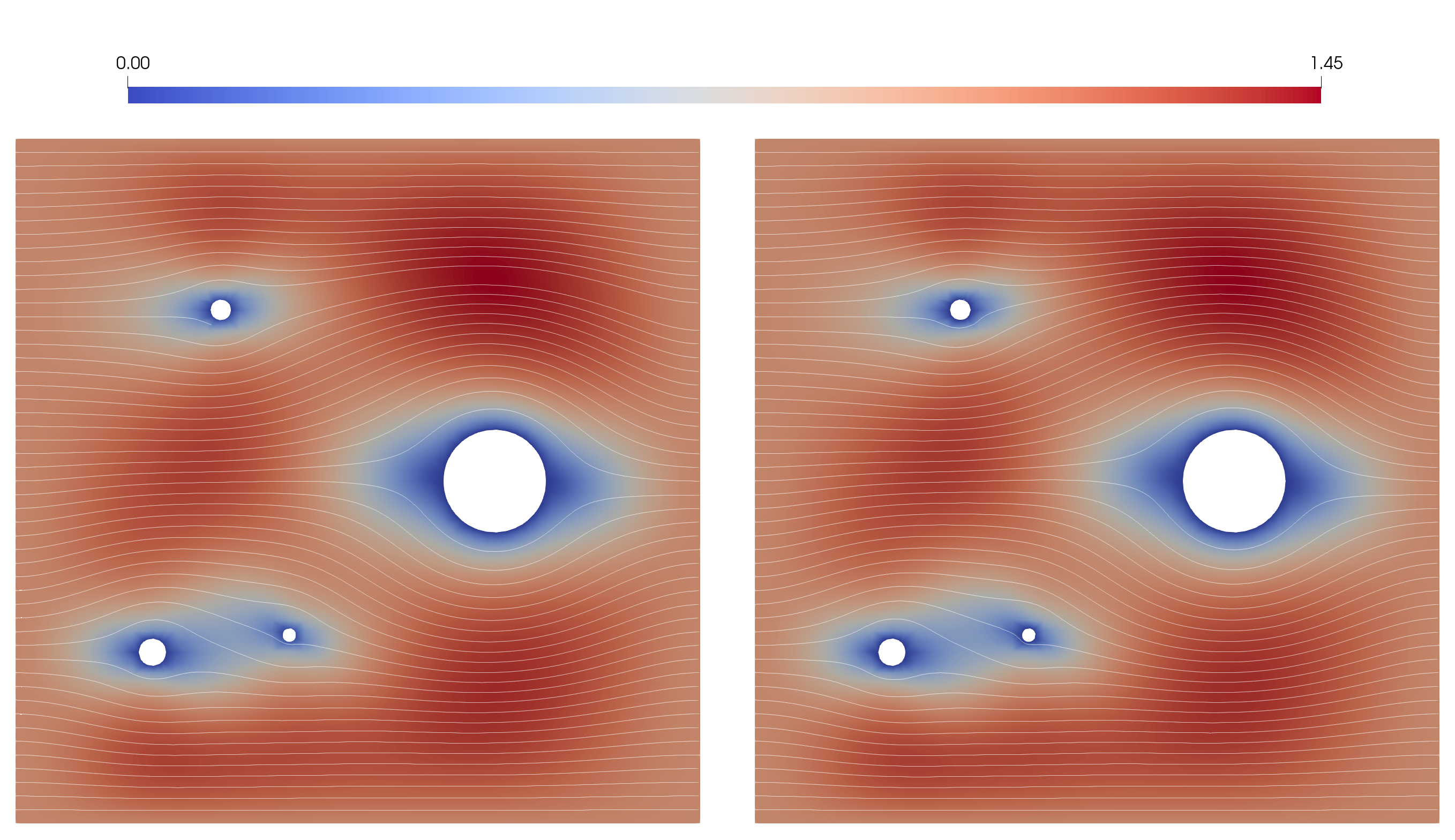}
	\caption{Stream plots of the velocity field on Mesh 4 with $k=1$. Left panel: non-enriched. Right panel: locally enriched ($\gamma=0.2$)}
	\label{fig:stream.plots}
\end{figure}

\begin{figure}[!ht]
	\centering
	\includegraphics[width=0.8\linewidth]{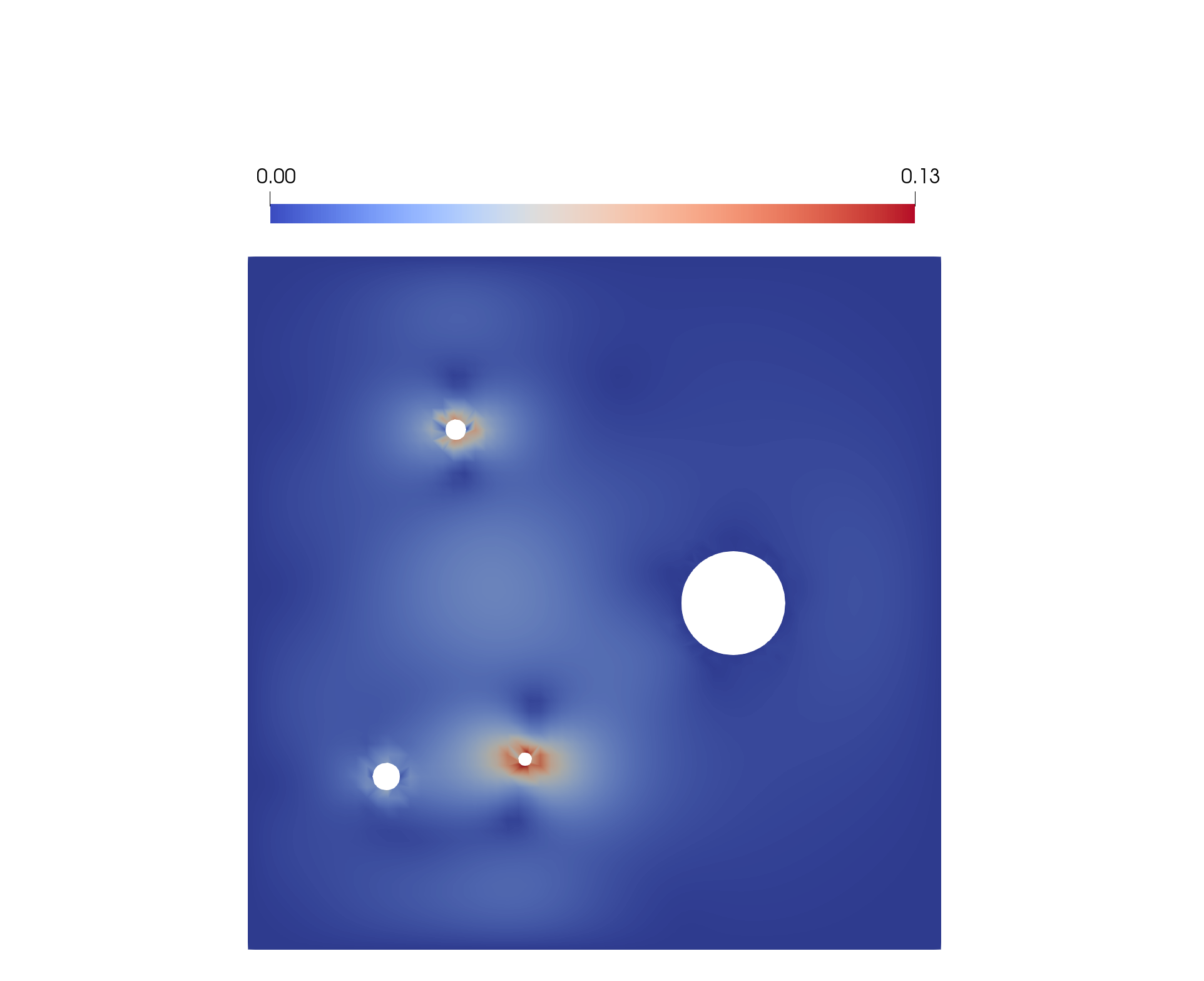}
	\caption{Magnitude of the difference between the non-enriched velocity field and locally enriched ($\gamma=0.2$) velocity field on Mesh 4 with $k=1$}
	\label{fig:velocity.difference}
\end{figure}

Figure \ref{fig:pressure.plots} presents plots of the pressure fields on Mesh 4 with $k=1$ for both the non-enriched and locally enriched cases ($\gamma = 0.2$). The difference between the two plots is more pronounced than for the velocity field. Indeed, Figure \ref{fig:pressure.difference} showcases the magnitude of the difference between the two schemes, where the difference can reach a magnitude of similar order to the pressure field itself. Interestingly, the difference appears to be more concentrated around the cylinders than in the velocity field, which can be attributed to the stronger pressure singularity at the centre of each cylinder compared to the velocity.

\begin{figure}[!ht]
	\centering
	\includegraphics[width=\linewidth]{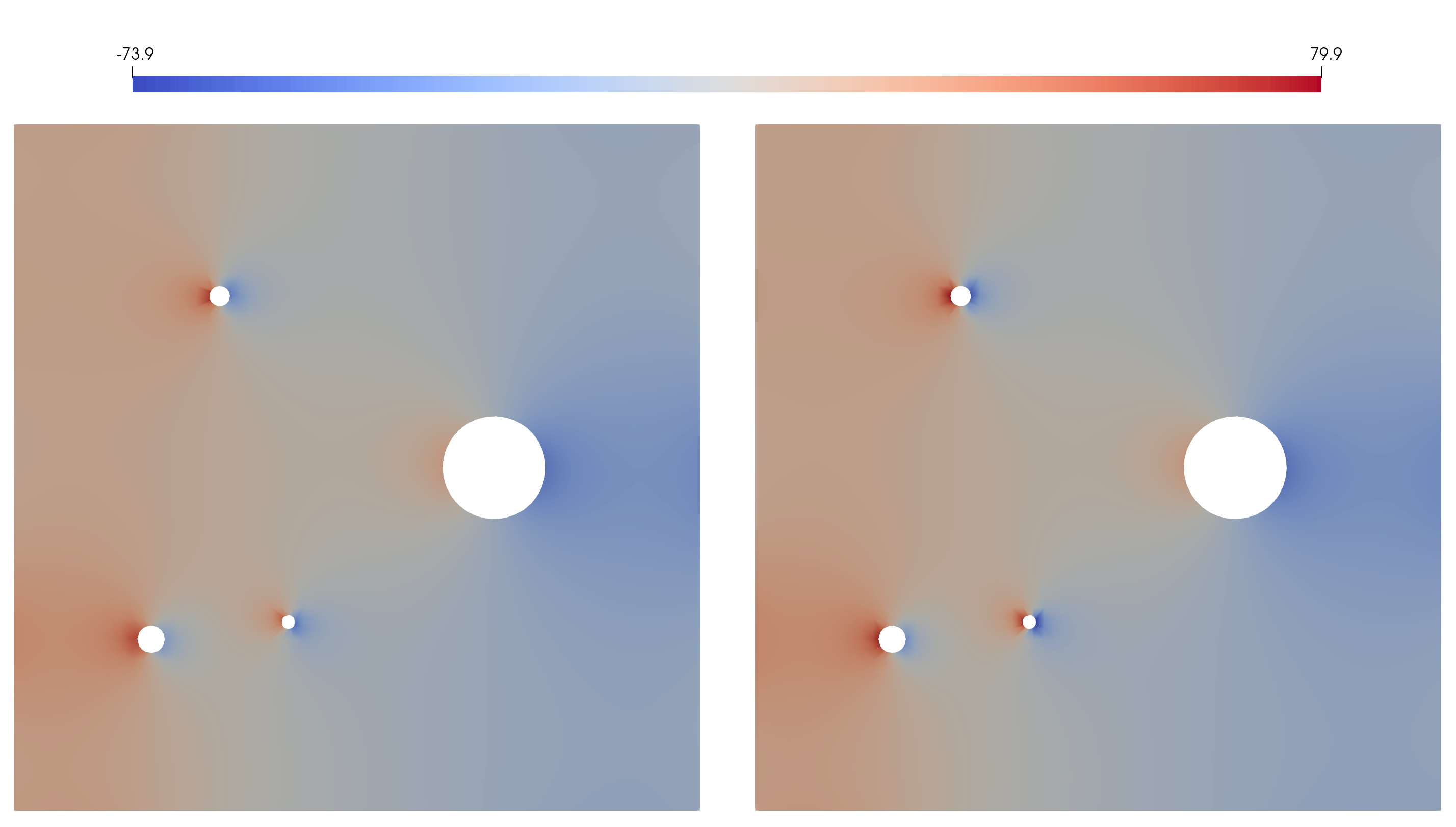}
	\caption{Plots of the pressure field on Mesh 4 with $k=1$. Left panel: non-enriched. Right panel: locally enriched ($\gamma=0.2$)}
	\label{fig:pressure.plots}
\end{figure}

\begin{figure}[!ht]
	\centering
	\includegraphics[width=0.7\linewidth]{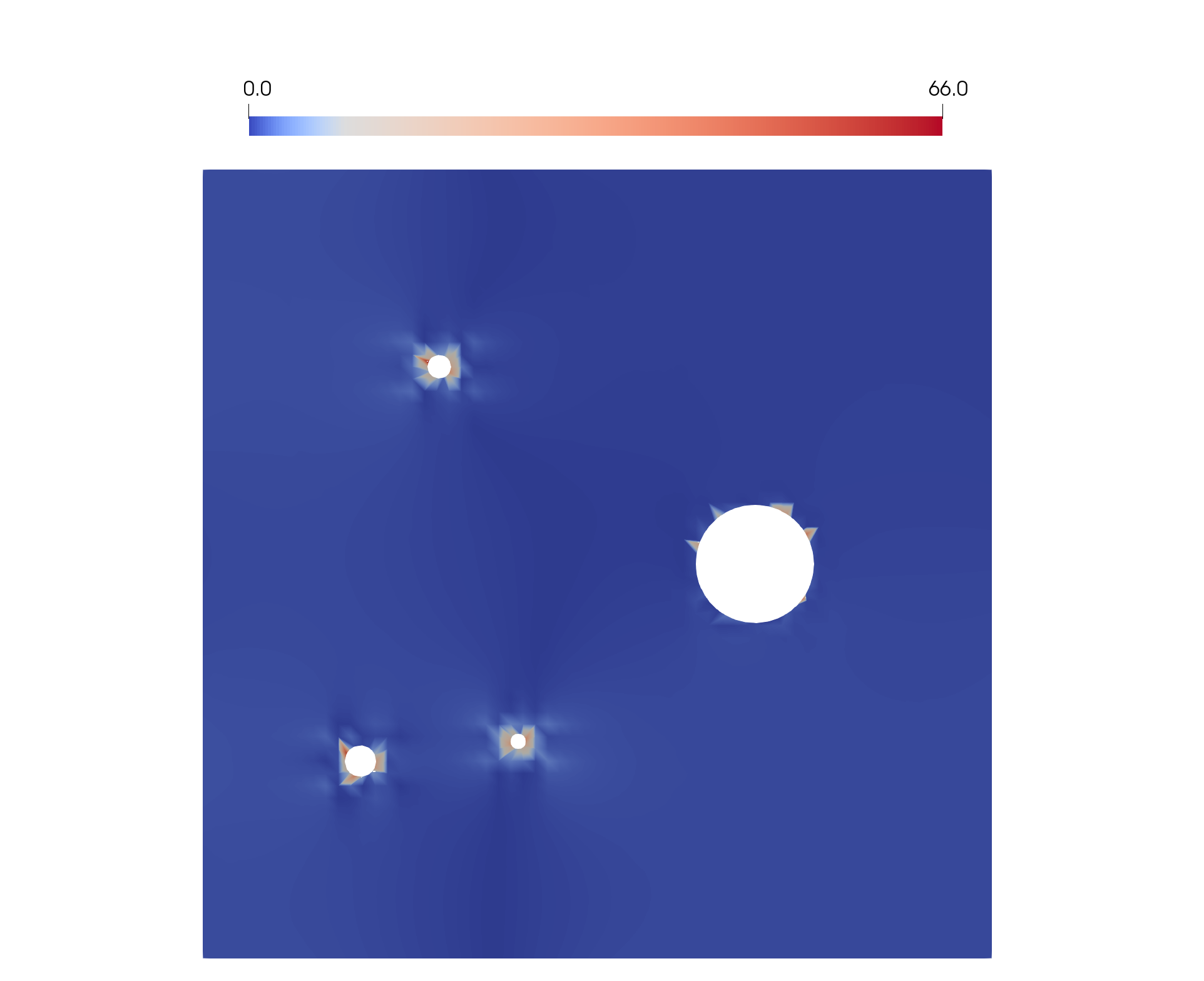}
	\caption{Magnitude of the difference between the non-enriched pressure field and locally enriched ($\gamma=0.2$) pressure field on Mesh 4 with $k=1$}
	\label{fig:pressure.difference}
\end{figure}

\section*{Acknowledgements}

This research was supported by a Monash University Postgraduate Publications Award.

\printbibliography

\end{document}